\bfseries\color{orange!80!black},
\lstdefinestyle{Python}
{language=Python}
\theoremstyle{plain}
\newtheorem{teorema}{Theorem}[section]
\newtheorem{proposicion}[teorema]{Proposition}
\newtheorem{corolario}[teorema]{Corollary}
\newtheorem{observacion}[teorema]{Remark}
\theoremstyle{definition}
\newcommand*{\defeq}{\mathrel{\vcenter{\baselineskip0.5ex \lineskiplimit0pt
			\hbox{\scriptsize.}\hbox{\scriptsize.}}}%
	=}
\newcommand{\Rset}{\mathbb{R}}
\newcommand{\bv}{\overline{v}}
\newcommand{\bu}{\overline{u}}
\newcommand{\dt}{\partial_t}
\renewcommand{\div}{\nabla\cdot}
\newcommand{\bbeta}{\boldsymbol{\beta}}
\newcommand{\FF}{\boldsymbol{F}}
\newcommand{\Fn}{\boldsymbol{\Phi}}
\newcommand{\nn}{\boldsymbol{n}}
\newcommand{\vv}{\boldsymbol{v}}
\newcommand{\vK}{v_{K}}
\newcommand{\vKs}{v_{K^*}}
\newcommand{\vL}{v_{L}}
\newcommand{\uK}{u_{K}}
\newcommand{\uKs}{u_{K^*}}
\newcommand{\uL}{u_{L}}
\newcommand{\vmin}{v_{min}}
\newcommand{\vmax}{v_{max}}
\def\escalar#1#2{\left(#1,#2\right)}
\def\escalarL#1#2{\escalar{#1}{#2}_{L^2(\Omega)}}
\def\escalarLd#1#2{\escalar{#1}{#2}_{\left(L^2(\Omega)\right)^d}}
\def\norma#1{\| #1\|  }
\def\normaL#1{\norma{#1}_{L^2(\Omega)}}
\def\normaLd#1{\norma{#1}_{\left(L^2(\Omega)\right)^d}}
\def\normaLinf#1{\norma{#1}_{L^\infty (\Omega)}}
\def\salto#1{\left[\!\left[#1\right]\!\right]}
\def\media#1{\left\{\!\!\left\{#1\right\}\!\!\right\}}
\def\T{\mathcal{T}}
\def\E{\mathcal{E}}
\def\S{\mathcal{S}}
\def\N{\mathbb{N}}
\def\X{\mathcal{X}}
\def\Pd{\mathbb{P}^{\text{disc}}}
\def\Pc{\mathbb{P}^{\text{cont}}}
\def\aupw#1#2#3{a_h^{\text{upw}}(#1;#2,#3)}
\def\Pe{\mathsf{Pe}}
\title{\textbf{
An upwind DG scheme preserving the maximum principle for the convective Cahn-Hilliard model}}
\def\@fnsymbol#1{\ensuremath{\ifcase#1\or \dagger\or \ddagger\or \mathsection\or * \or\mathparagraph\or *\or **\or \dagger\dagger \or \ddagger\ddagger \else\@ctrerr\fi}}
\author{Daniel Acosta-Soba\thanks{Departamento de Matemáticas, Universidad de Cádiz, Puerto Real, 11510 Cádiz, Spain -- Email: \texttt{daniel.acosta@uca.es}} \thanks{Department of Mathematics, University of Tennessee at Chattanooga, Chattanooga, TN 37403, USA}~,
~Francisco Guillén-González\thanks{Departamento de Ecuaciones Diferenciales y Análisis Numérico \& IMUS, Universidad de Sevilla, 41012 Seville, Spain -- Email: \texttt{guillen@us.es}}~,
~J. Rafael Rodríguez-Galván\thanks{Departamento de Matemáticas, Universidad de Cádiz, Puerto Real, 11510 Cádiz, Spain -- Email: \texttt{rafael.rodriguez@uca.es} -- Corresponding author}}
\begin{document}

\bibliographystyle{abbrv} 

\maketitle

\begin{abstract}

The design of numerical approximations of the Cahn-Hilliard model preserving the maximum principle is a challenging problem, even more if considering additional transport terms. In this work we present a new upwind Discontinuous Galerkin scheme for the convective Cahn-Hilliard model with degenerate mobility which preserves the maximum principle and prevents non-physical spurious oscillations. Furthermore, we show some numerical experiments in agreement with the previous theoretical results. Finally, numerical comparisons with other schemes found in the literature are also carried out.

\end{abstract}

\paragraph{Keywords:} Discontinuous Galerkin, upwind scheme, diffuse interface, convection, degenerate mobility.

\section{Introduction}\label{sec1}

This paper is concerned with the development of Discontinuous Galerkin (DG)
numerical schemes for the following convective Cahn-Hilliard (CCH) problem (written as a second order system): Given  an incompressible velocity field
$\vv\in C(\overline\Omega)^d$ with $\nabla\cdot\vv=0$ in $\Omega$,
such that $\vv\cdot\nn=0$ on $\partial\Omega$, find two real valued functions, the phase $u$ and the chemical potential $\mu$, defined in
$\Omega\times [0,T]$ such that:
\begin{subequations}
	\label{problema:cahn-hilliard+adveccion}
	\begin{align}
		\label{eq:cahn-hilliard+adveccion_u}
		\partial_t u&=\frac1{\Pe}\nabla\cdot\left(M(u)\nabla\mu\right)- \nabla\cdot(u\vv)\quad&\text{in }\Omega\times (0,T),\\
		\label{eq:cahn-hilliard+adveccion_mu}
		\mu&=F'(u)-\varepsilon^2\Delta u\quad&\text{in }\Omega\times (0,T),\\
		\nabla u\cdot \boldsymbol{n}&=\left(M(u)\nabla\mu-u\vv\right)\cdot \boldsymbol{n}=0 \quad &\text{on }\partial\Omega\times (0,T),\\
		\label{eq:CI.cahn-hilliard+adveccion}
		u(0)&=u_0\quad&\text{in }\Omega.
	\end{align}
\end{subequations}
The phase field variable $u$ localizes the two different phases at the values $u=0$ and $u=1$, while the interface occurs when $0<u<1$.
Here $\Omega$ is a bounded
polygonal domain in $\mathbb R^d$
($d=2$ or $3$ in practice), with boundary $\partial\Omega$ whose
outward-pointing unit normal is denoted $\nn$. Parameters are  $\Pe>0$  the
Péclet number of the flow and $\varepsilon>0$ related to the width interface. For simplicity, we take $\Pe=1$. The given initial phase is denoted by $u_0$.
The classical Cahn–Hilliard equation (CH) corresponds to the case without convection, i.e.
$\vv=0$.

We consider the double-well Ginzburg-Landau potential
$$F(u)=\frac{1}{4}u^2(1-u)^2$$ and the \textit{degenerate mobility}
\begin{equation}
	M(u)=u(1-u).\label{eq:mobility}
\end{equation}
This type of  degenerate mobility, vanishing at the pure phases $u=0$ and $u=1$,  implies the following maximum principle property~(see \cite{Elliott-Garcke_ChanHilliard_1996} for
the CH equation): if $0\le u_0\le 1$ in $\overline\Omega$ then  $0\le u(\cdot,t)\le 1$ in $\overline\Omega$ for $t\in (0,T)$. This property {does not hold} for constant mobility, and in general for fourth-order parabolic equations.
We are concerned in numerical schemes maintaining this property at the discrete level.

Cahn-Hilliard equation was originally
introduced in~\cite{cahn_hilliard_1958, Cahn_1795} as a phenomenological
model of phase separation in a binary alloy and it has been
successfully applied in different contexts as a model which
characterizes phase segregation and interface dynamics. Applications
include tumor
tissues~\cite{wu_zwieten_vanDerZee_2014,wise-lowengrub-frieboes-cristini_Tumor-2008},
image processing~\cite{bertozzi_inpainting_2007} and multi-phase fluid
systems, see e.g.\ the review~\cite{kim_review_phase-field_fluid_2012}.
The dynamics of this equation comprises a first stage in which a rapid
separation process takes place, leading to the creation of interfaces,
followed by a
{second stage where aggregation and development of bulk phases separated by thin diffuse interfaces take place}.
These two phenomena
(separation and aggregation) are characterized by different temporal
and spatial scales which, together with the non-linear and
fourth-order nature of CH, makes efficiently solving this equation an
interesting computational challenge.

The interface is represented as a layer of small thickness of order
$\varepsilon$ and the auxiliary function $u$ (the phase-field
function) is used to localize the (pure) phases $u=0$ and $u=1$.  The
chemical potential $\mu$  is the variational derivative
of the Helmholtz free energy $E(u)$ with respect to $u$,
$\mu=\partial_u E(u)$, where
\begin{align}
	E(u)=\int_\Omega\left(\frac{\varepsilon^2}{2} \vert\nabla u\vert^2+ F(u)\right) dx.
\end{align}

Then the dynamics of CH correspond to the physical energy dissipation
of $u_t = -\div J(u)$, where
$J(u)=-M(u)\nabla \mu = -M(u)\nabla\partial_u E(u)$.  For existence of
solution for CH equation with a degenerate mobility of the
type~(\ref{eq:mobility}), besides bounds for the phase over time, we
can refer to~\cite{Elliott-Garcke_ChanHilliard_1996}. A review and
many variants can be read in~\cite{Miranville_2017_CH}.

Numerical approximation of CH {is} a research topic of great
interest due to the wide spectrum of its applications, as well as a
source of interesting mathematical challenges due to the computational
issues referred above. Time approximations include both linear
schemes~\cite{guillen-gonzalez_linear_2013} and nonlinear ones, where
a Newton method is usually employed.  Although some authors (see
e.g.~\cite{Elliott-French_Cahn-Hilliard-FEM_1989}) consider the
fourth-order equation obtained by substitution {in the equation
	(\ref{eq:cahn-hilliard+adveccion_u})} of the expression of $\mu$ given
in (\ref{eq:cahn-hilliard+adveccion_mu}), the spatial discretization
is usually based on the mixed phase field/chemical potential
formulation presented in~(\ref{problema:cahn-hilliard+adveccion}).
These equations can be approximated by classical numerical methods,
including: {(a) }Finite differences, see
e.g.~\cite{furihata-daisuke_CH-2001,cheng-wise-et_al_CH-FD_2019} for
constant mobility $M(u)\equiv 1$
and~\cite{Kim_CH-FiniteDifference_2007} for degenerate mobility
similar to~(\ref{eq:mobility}), see
also~\cite{wang-wise-steven_CrystalCH_2011} for applications in
crystallography. {(b)} Finite volumes, see
e.g.~\cite{cueto-peraire_Cahn-Hilliard_FV_2008,bailo-carrillo_Cahn-Hilliard-FV_2021}
for degenerate mobility schemes. {(c)} And, above all, finite elements see
the precursory papers~\cite{Elliott-French_Cahn-Hilliard-FEM_1989}
(constant mobility) and~\cite{Barrett-Blowey-Garcke_1999_CH}
(degenerate mobility).

In recent years, an increasing number of advances has been published exploring
the use of discontinuous Galerkin (DG) methods both for
constant~\cite{kay_styles_suli_DG_2009,Aristotelous-Karakashian-Wise_2015_CH-DG}
and degenerate
mobility~\cite{Wells_Cahn-Hilliard-DG_2006,xia_xu_etal_2007_LDG_CH,Riviere_DG_advect_CH_2018,Liu-Yin_2021_Cahn-Hilliard-DG}.  Although DG methods lead in general to more complex
algorithms and to bigger amounts of degrees of freedom, they exhibit
some benefits of which one can take advantage also in CH
equations, for instance doing mesh refining and adaptivity~\cite{Aristotelous-Karakashian-Wise_2015_CH-DG}.
In this paper we are interested in the following {relevant point}: the possibility of designing conservative schemes preserving the maximum-principle $0\le u(t,x)\le 1$ at the discrete level, even for the CCH variant of CH equations,
{namely} where a convective term models the convection or transport of the phase-field.

	The idea of introducing a convection term in the CH equations modeling
	a phase-field driven by a flow, arriving at
	the CCH problem~(\ref{problema:cahn-hilliard+adveccion}), arouses
	great interest. Specially, in the case where the phase field is
	coupled with fluid equations. 
	
	To design adequate numerical approximations of these CCH equations is an extremely challenging problem because it adds the hyperbolic nature of convective terms to the inherent difficulties of the CH equation. There are not many numerical methods in the literature on this topic, although some interesting contributions can be found. The first of them~\cite{badalassi_ceniceros_banerjee-multiphase_systems_2003} is {worth mentioning} for the application of high-resolution spectral Fourier schemes. We also underline the papers~\cite{boyer2017ddfv,Li-Gao-Chen_Navier-Stokes-Cahn-Hilliard-FV_2020}, based on finite volume approximations, and also~\cite{chen2016efficient,guo2017mass}, where finite difference techniques are applied for Navier-Stokes CH equations.

Some authors have recently worked in DG methods
for spatial discretization of the CCH problem. Using DG schemes
in this context is natural because they are well-suited in
convection-dominated problems, even when the Péclet number is
substantially large. For instance, the work of~\cite{kay_styles_suli_DG_2009} is focused on construction and
convergence analysis of a DG method
for the CCH {equations} with constant mobility,
applying an interior penalty technique to the second order terms and
an upwind operator for discretization of the convection term.  Authors
of~\cite{Riviere_DG_advect_CH_2018} consider CCH with degenerate mobility  applying also an interior penalty to second order terms  in the mixed
form~(\ref{problema:cahn-hilliard+adveccion}) and a more elaborated
upwinding technique, based on a sigmoid function, to the convective
term.

These previous works show that DG methods are well suited for the approximation
of CCH equations, obtaining optimal convergence
order and maintaining most of the properties of the continuous
model.
But they have room for improvement in one specific question:  getting a maximum
principle for the phase in the discrete case. Although there are some works in which the maximum principle for the CH model is preserved at the discrete level using finite volumes \cite{bailo-carrillo_Cahn-Hilliard-FV_2021} and flux limiting for DG \cite{frank_bound-preserving_2020}, to the best knowledge of the authors, {no scheme has been published in which an upwind DG technique is used to obtain a discrete maximum principle property}.

Our main contribution in this paper is made in this regard: the
development of numerical schemes which guarantee punctual estimates of the phase
at the discrete level, in addition to maintaining the rest of
continuous properties of~(\ref{problema:cahn-hilliard+adveccion}). The
main idea is to introduce an upwind DG discretization
associated to the transport of the phase by the convective velocity
$\vv$, but also associated to the degenerate mobility $M(u)$. The main result in
this work is Theorem~\ref{thm:principio_del_maximo_DG_Cahn-Hilliard},
where we show that, for a piecewise constant approximation of the
phase, our DG scheme preserves the maximum principle, that is the discrete phase
is also bounded in $[0,1]$.

	Our scheme is specifically designed for the CH equation with non-singular (polynomial) chemical potential and degenerate mobility. For the CH equation with the logarithmic Flory-Huggins potential, a strict maximum principle is satisfied without the need of degenerate mobility.  In this case, designing maximum principle-preserving numerical schemes is a different issue. See, for instance, the paper~\cite{chen2019positivity} where a finite difference numerical  is proposed in this regard.

The paper is organized as follows:
In Section~\ref{sec:dg-discrretization}, we fix notation and review DG
techniques for discretization of conservative laws.
We consider the linear convection given by a
velocity field and show that the usual linear upwind DG numerical scheme with $P_0$  approximation preserves positivity in general, and the maximum principle in the divergence-free case.
In Section~\ref{sec:cahn-hilliard}, we consider the CCH problem~(\ref{problema:cahn-hilliard+adveccion}). Using
a truncated potential and a convex-splitting time discretization
we obtain a scheme satisfying an energy law, which is decreasing for the CH case ($\vv=0$). Then we introduce our fully discrete
scheme~(\ref{esquema_DG_upw_Eyre_cahn-hilliard+adveccion}) providing a
maximum principle for the discrete phase variable.
Finally, in Section~\ref{sec:numer-experiments} we present several numerical {tests}, comparing  our DG scheme  with two different space discretizations found in the literature: classical continuous $P_1$ finite elements and the SIP+upwind sigmoid DG approximation proposed in~\cite{Riviere_DG_advect_CH_2018}. We show error order tests and also we present qualitative comparisons where the maximum principle of our scheme is confirmed (and it is not conserved {by} the other {ones}).

\section{DG discretization of conservative laws}
\label{sec:dg-discrretization}

Throughout this section we {are going to} study the problem of approximating linear and nonlinear conservative laws preserving the positivity of the continuous models. To this purpose, we introduce an upwinding DG scheme which follows the ideas of the paper \cite{mazen_saad_2014} based on an upwinding finite volume method.

\subsection{Notation}
Firstly, we {are going to} set the notation that will be used from now on.
Let $\Omega\subset \Rset^d$ be a bounded polygonal domain. Then, we consider a shape-regular  triangular mesh $\T_h=\{K\}_{K\in \T_h}$ of size $h$ over $\Omega$, and we {denote as} $\E_h$ the set of the edges of $\T_h$ (faces if $d=3$) distinguishing between the \textit{interior edges} $\E_h^i$ and the \textit{boundary edges} $\E_h^b$. Then $\E_h=\E_h^i\cup\E_h^b$.

According to Figure \ref{fig:orientation_n_e} we will consider for an interior edge $e\in\E_h^i$ shared by the elements $K$ and $L$, i.e. $e=\partial K\cap\partial L$, that the associated unit normal vector $\nn_e$ is exterior to $K$ pointing to $L$. Moreover, for the boundary edges $e\in\E_h^b$, the unit normal vector $\nn_e$ will be pointing outwards of the domain $\Omega$.

\begin{figure}[h]
	\centering
	\begin{tikzpicture}
		\draw (0,0) -- (3,0) -- (4,3) -- (1,3) -- cycle;
		\draw (3,0) -- (1,3);
		\node at (0.8,0.6) {$K$};
		\node at (2.3,2.6) {$L$};
		\node at (1.4,2) {$e$};
		\draw[line width=1pt,-stealth] (2,1.5)--(2.75,2) node[right]{$\nn_e$};
	\end{tikzpicture}
	\caption{Orientation of the unit normal vector $\nn_e$.}
	\label{fig:orientation_n_e}
\end{figure}
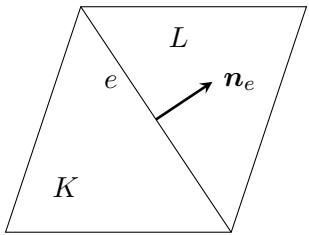

Therefore, we can define the \textit{average} $\media{\cdot}$ and the \textit{jump} $\salto{\cdot}$ of a scalar function $v$ on an edge $e\in\E_h$ as follows:

\begin{align*}
	\media{v}&\defeq
	\begin{cases}
		\dfrac{\vK+\vL}{2}&\text{if } e=\partial K\cap\partial L\in\E_h^i\\
		\vK&\text{if }e=\partial K\in\E_h^b
	\end{cases},\\
	\salto{v}&\defeq
	\begin{cases}
		\vK-\vL&\text{if } e=\partial K\cap\partial L\in\E_h^i\\
		\vK&\text{if }e=\partial K\in\E_h^b
	\end{cases}.
\end{align*}

We will denote by $\Pd_k(\T_h)$ and $\Pc_k(\T_h)$ the spaces of  discontinuous and continuous functions, respectively, whose restriction to the elements $K$ of $\T_h$ are polynomials of degree $k\ge 0$.

Moreover, we will take an equispaced partition $0=t_0<t_1<\cdots<t_N=T$ of the time domain $[0,T]$ with $\Delta t=t_{m+1}-t_m$ the time step.

Finally we set the following notation for the positive and negative parts of a scalar function $v$:
$$
v_\oplus\defeq\frac{\vert v\vert +v}{2}=\max\{v,0\},
\quad
v_{\ominus}\defeq\frac{\vert v\vert -v}{2}=-\min\{v,0\},
\quad
v=v_\oplus - v_\ominus.
$$

\subsection{Conservative laws}

We consider the following non-linear conservative problem for $v$:
\begin{equation}
	\label{eq:conserv.1}
	v_t + \div \FF(v) = 0 \quad\text{en } \Omega\times (0,T),
\end{equation}
where the flux $\FF(\cdot)$ is a vectorial continuous function.

Let $S_h=\Pd_k(\T_h)$.
Multiplying by any $\bv\in \S_h$ and using the Green Formula in each element $K\in \T_h$:
\begin{equation*}
	\int_\Omega \dt v \;\bv - \sum_{K\in \T_h} \int_K \FF(v)\cdot \nabla \bv
	+ \sum_{K\in \T_h} \int_{\partial K} \big(\FF(v)\cdot \nn_K\big) \, \bv
	= 0,
\end{equation*}
where $\nn_K$ is the normal vector to $\partial K$ pointing outwards $K$.
If $v$ is a strong solution of~(\ref{eq:conserv.1}), such that $v$ is
continuous  over $e\in\E_h$,
we get:
\begin{align}
	\label{eq:conserv.2}
	\int_\Omega \dt v \;\bv - \sum_{K\in \T_h} \int_K \FF(v) \cdot\nabla \bv
	+ \sum_{e\in \E_h} \int_{e} \big(\FF(v)\cdot \nn_e\big) \, \salto\bv
	&= 0.
\end{align}
But, if we look for functions $v\in \S_h$ which are discontinuous  over $e\in\E_h$, then the term
$\FF(v)\cdot \nn_e$ in the last integral of~(\ref{eq:conserv.2}) is not well defined and we need to approach it. Hence, the concept of numerical flux is used, taking
expressions like
$\Fn(v_K, v_L, \nn_e)$ such that:
$$
\int_{e} \big(\FF(v)\cdot \nn_e\big) \, \salto\bv
\approx
\int_{e} \Fn(\vK,\vL,\nn_e) \, \salto\bv.
$$

From now on we will consider fluxes of the form
$$\FF(v)=M(v)\bbeta,
$$
with $M=M(v)\in \mathbb{R}$ and $\bbeta=\bbeta(x,t)\in \Rset^d$. Since  in this paper we are interested in studying conservative
problems defined over isolated domains, we impose from now {on} that the
transport vector $\bbeta$ satisfies  the slip condition
\begin{equation} \label{slipcondition}
	\bbeta\cdot\nn=0 \quad\text{on}\ \partial\Omega.
\end{equation}

\begin{observacion}
	We will have to take into consideration the sign of $M'(v)$, i.e. if $M(v)$ is nonincreasing or nondecreasing, in order to work out a well-suited upwinding scheme. This is due to
	\begin{align}
		\nabla\cdot\FF(v)=
		\nabla\cdot(M(v)\bbeta)=M'(v)\bbeta\cdot\nabla v + M(v)\nabla\cdot\bbeta.
	\end{align}
	where
	\begin{itemize}
		\item The first term $M'(v)\bbeta\cdot\nabla v$ is a transport of $v$ in the direction of $M'(v)\bbeta$.
		\item The second term $M(v)\nabla\cdot\bbeta$ can be seen as a reaction term with coefficient $\nabla\cdot \bbeta$.
	\end{itemize}
	It may be specially interesting the case $\bbeta=-\nabla v$, where the whole term $\nabla\cdot(M(v)\bbeta)=-\nabla\cdot(M(v)\nabla v)$ is a nonlinear diffusion term.
	
\end{observacion}

\subsection{Linear convection and positivity}
\label{seccion:conveccion_lineal}
Taking $M(v)=v$, we arrive at the linear conservative problem:
\begin{subequations}
	\label{problema:conveccion}
	\begin{align}
		\label{eq:conveccion}
		v_t+\nabla\cdot(v\bbeta)&=0&\text{in }\Omega\times(0,T),\\
		\label{ic:conveccion}
		v(0)&=v_0&\text{in }\Omega,
	\end{align}
\end{subequations}
where $\bbeta\colon\overline\Omega\to\Rset^d$ is continuous.
\begin{observacion} \label{rmrk:positividad_modelo_conveccion_lineal}
	The problem \eqref{problema:conveccion} is well-posed without imposing
		boundary conditions due to the slip
		condition~(\ref{slipcondition}) (it can be derived writing an integral expression of solution in terms of the characteristic curves, see for example \cite{pironneau1989finite}). In particular, this integral expression implies the positivity of the solution, namely,
		if $v_0\ge 0$ in $\overline\Omega$,
		then
		the solution $v$ of \eqref{problema:conveccion} satisfies $v\ge 0$ in $\overline\Omega\times(0,T)$.
\end{observacion}

For the linear problem \eqref{eq:conveccion}, we introduce
the upwind numerical flux
\begin{equation}\label{Up-flux}
	\Fn(\vK,\vL,\nn_e)\defeq (\bbeta\cdot\nn_e)_{\oplus} \vK-(\bbeta\cdot\nn_e)_{\ominus}\vL,
\end{equation}
which leads to the following discrete scheme for
\eqref{eq:conveccion}: Given $v^{m}\in \S_h$, find
$v^{m+1}\in \S_h$ solving
\begin{align}
	\label{esquema:ecuacion_conveccion_parte_pos}
	\int_\Omega\delta_t v^{m+1}\overline{v} +\aupw{\bbeta}{v^{m+1}_{\oplus}}{\overline{v}}&=0,\quad\forall\overline{v}\in \S_h,
\end{align}
where 
\begin{align}\label{Up-Bilin}
	\aupw{\bbeta}{v}{\overline{v}}\defeq &-\sum_{K\in\T_h}\int_K v (\bbeta\cdot\nabla\overline{v})
	\nonumber \\&+\sum_{e\in\E_h^i, e=K\cap L}\int_e\left( (\bbeta\cdot\nn_e)_{\oplus}\vK
	-(\bbeta\cdot\nn_e)_{\ominus} \vL\right) \salto{\bv}
\end{align}
and
$\delta_t v^{m+1}=(v^{m+1}-v^{m})/\Delta t$ denotes a discrete time derivative.

Notice that we have truncated $v$ by its positive part $v_{\oplus}$, taking into account that the solution of the continuous model \eqref{eq:conveccion} is positive.

\begin{observacion}
	The numerical flux $\Fn(\vK,\vL,\nn_e)$ giving in \eqref{Up-flux} can be rewritten as follows: $$\Fn(\vK,\vL,\nn_e)=(\bbeta\cdot\nn_e)\media{v}+\frac{1}{2}\vert\bbeta\cdot\nn_e\vert\salto{v},$$ where $(\bbeta\cdot\nn_e)\media{v}$ is a centered-flux term and $\frac{1}{2}\vert\bbeta\cdot\nn_e\vert\salto{v}$ is the upwind term.
\end{observacion}

Now we will prove that if we set $\S_h=\Pd_0(\T_h)$, the scheme \eqref{esquema:ecuacion_conveccion_parte_pos} preserves the positivity of the continuous model. In this case, since $\sum_{K\in\T_h}\int_K (\bbeta\cdot\nabla\overline{v})v=0$, the upwind term \eqref{Up-Bilin} reduces to
\begin{equation}\label{Up-Bilin-b}
	\aupw{\bbeta}{v}{\overline{v}}
	= \sum_{e\in\E_h^i, e=K\cap L} \int_e\left( (\bbeta\cdot\nn_e)_{\oplus}\vK-(\bbeta\cdot\nn_e)_{\ominus}\vL\right)\salto{\bv}.
\end{equation}

\begin{teorema}[DG scheme \eqref{esquema:ecuacion_conveccion_parte_pos} preserves positivity]
	\label{prop:positividad_ecuacion_conveccion_parte_pos}
	If $\S_h=\Pd_0(\T_h)$, then the scheme \eqref{esquema:ecuacion_conveccion_parte_pos}  preserves positivity, that is, for any $v^m\in\S_h$ with $v^m\ge 0$ in $\overline\Omega$, then any solution $v^{m+1}$ of \eqref{esquema:ecuacion_conveccion_parte_pos} satisfies $v^{m+1}\ge 0$ in $\overline\Omega$.
\end{teorema}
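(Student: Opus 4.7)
I would argue by induction on the time index: assume $v^m\in\S_h$ with $v^m\ge 0$ in $\overline\Omega$ and prove $v^{m+1}\ge 0$ by contradiction. Pick any $K_0\in\T_h$ with (hypothetically) $v^{m+1}_{K_0}<0$; the plan is to test~(\ref{esquema:ecuacion_conveccion_parte_pos}) against $\bv=\chi_{K_0}\in\Pd_0(\T_h)$, the characteristic function of $K_0$, and show that the upwind edge flux around $K_0$ contributes non-positively, which will force $v^{m+1}_{K_0}\ge v^m_{K_0}\ge 0$, the required contradiction.

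First, since $\bv\in\Pd_0(\T_h)$ gives $\nabla\bv=0$, the volume term in~(\ref{Up-Bilin}) disappears and one is left with the edge sum~(\ref{Up-Bilin-b}). Among interior edges, only those $e$ with $e\subset\partial K_0$ survive (the others have $\salto{\bv}=0$); boundary edges do not appear at all in~(\ref{Up-Bilin-b}), consistently with the slip condition~(\ref{slipcondition}). Using the truncation $v^{m+1}\mapsto v^{m+1}_\oplus$ built into the scheme, the identity~(\ref{esquema:ecuacion_conveccion_parte_pos}) with $\bv=\chi_{K_0}$ becomes
\begin{equation*}
    |K_0|\,\frac{v^{m+1}_{K_0}-v^m_{K_0}}{\Delta t}
    +\!\!\sum_{\substack{e\in\E_h^i\\ e\subset\partial K_0}}\!\!
        |e|\bigl((\bbeta\cdot\nn_e)_\oplus(v^{m+1}_K)_\oplus
                 -(\bbeta\cdot\nn_e)_\ominus(v^{m+1}_L)_\oplus\bigr)\salto{\bv}=0,
\end{equation*}
where on each edge in the sum, $K$ and $L$ denote its two adjacent elements ordered by the fixed orientation of $\nn_e$.

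On each such edge, $\salto{\bv}$ equals $+1$ when $K_0$ plays the role of $K$ (so $\nn_e$ points out of $K_0$) and $-1$ when $K_0$ plays the role of $L$. In either case the hypothesis $v^{m+1}_{K_0}<0$ gives $(v^{m+1}_{K_0})_\oplus=0$, while the remaining trace satisfies $(v^{m+1}_L)_\oplus\ge 0$ (resp.\ $(v^{m+1}_K)_\oplus\ge 0$) by definition of the positive part: the edge contribution reduces to $-|e|(\bbeta\cdot\nn_e)_\ominus(v^{m+1}_L)_\oplus\le 0$ in the first case and to $-|e|(\bbeta\cdot\nn_e)_\oplus(v^{m+1}_K)_\oplus\le 0$ in the second. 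Summing gives a non-positive edge total, hence $|K_0|(v^{m+1}_{K_0}-v^m_{K_0})/\Delta t\ge 0$, and therefore $v^{m+1}_{K_0}\ge v^m_{K_0}\ge 0$, contradicting $v^{m+1}_{K_0}<0$. The main bit of care I expect to need is precisely this orientation bookkeeping: ensuring that both subcases arising from the fixed conventions for $\nn_e$ and $\salto{\cdot}$ collapse to non-positive contributions once the $K_0$-trace is killed by $(\cdot)_\oplus$. This is exactly what the truncation $v\mapsto v_\oplus$ in~(\ref{esquema:ecuacion_conveccion_parte_pos}) is designed to guarantee; everything else is immediate from the inductive hypothesis $v^m\ge 0$.
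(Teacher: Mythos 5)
Your argument is correct, but it is not the paper's argument. The paper tests with the function equal to $(v_{K^*}^{m+1})_{\ominus}$ on the element $K^*$ realizing $\min_K v_K^{m+1}$ (and $0$ elsewhere), and then exploits the extremality of $K^*$: since $v_L^{m+1}\ge v_{K^*}^{m+1}$ for all $L$ and $(\cdot)_\oplus$ is nondecreasing, the inflow traces can be replaced by $(v_{K^*}^{m+1})_\oplus$, after which the product $(v_{K^*}^{m+1})_\oplus (v_{K^*}^{m+1})_\ominus=0$ kills the whole edge sum; the conclusion then comes from the sign of $-\bigl((v_{K^*}^{m+1})_\ominus^2+v_{K^*}^m(v_{K^*}^{m+1})_\ominus\bigr)$. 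You instead test with $\chi_{K_0}$ on an \emph{arbitrary} element hypothetically carrying a negative value and observe that the truncation alone forces $(v_{K_0}^{m+1})_\oplus=0$, so the outgoing flux vanishes and each remaining edge term is nonpositive once multiplied by $\salto{\bv}=\pm1$; your orientation bookkeeping for the two cases is right. Your route is more local and elementary (no global minimizer, no monotonicity of the positive part), but it works \emph{only because of} the $\oplus$-truncation in \eqref{esquema:ecuacion_conveccion_parte_pos}; the paper's minimizer-plus-negative-part technique is the one that survives when the truncation is absent and is reused essentially verbatim for the maximum principle of the untruncated linear scheme (Proposition \ref{thm:positividad_DG_conveccion_incompresible}) and for Theorem \ref{thm:principio_del_maximo_DG_Cahn-Hilliard}, where one must instead invoke incompressibility or the monotonicity of $M^\uparrow$, $M^\downarrow$. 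Two cosmetic points: writing $|e|(\bbeta\cdot\nn_e)_\oplus$ tacitly treats $\bbeta\cdot\nn_e$ as constant on $e$ — keep the integral $\int_e$, since the sign argument is pointwise anyway — and the "induction on the time index" framing is superfluous, as the statement is a single-step implication.
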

\begin{proof}
	Taking the following test function
	\begin{align*}
		\overline{v}=
		\begin{cases}
			(\vKs^{m+1})_{\ominus}&\text{in }K^*\\
			0&\text{out of }K^*
		\end{cases},
		\quad
		\bv\in\S_h,
	\end{align*}
	where $K^*$ is an element of $\T_h$ such that $\vKs^{m+1}=\min_{K\in\T_h}v_{K}^{m+1}$, the scheme \eqref{esquema:ecuacion_conveccion_parte_pos} becomes
	\begin{equation} \label{scheme-v+}
		\vert K^*\vert \left(\delta_t \vKs^{m+1}\right)(\vKs^{m+1})_{\ominus}=-\aupw{\bbeta}{v^{m+1}_{\oplus}}{(\vKs^{m+1})_{\ominus}}.
	\end{equation}
	
	By applying for all $L\in\T_h$, $\vL^{m+1}\ge \vKs^{m+1}$ hence in particular $(\vL^{m+1})_{\oplus}\ge (\vKs^{m+1})_{\oplus}$   (the positive part is a non-decreasing function), we deduce
	\begin{align*}
		\aupw{\bbeta&}{v^{m+1}_{\oplus}}{(\vKs^{m+1})_{\ominus}}=\\&=\sum_{e\in\E_h^i, e=K^*\cap L}\int_e\left((\bbeta\cdot\nn_{e})_{\oplus}(\vKs^{m+1})_{\oplus}-(\bbeta\cdot\nn_{e})_{\ominus}(\vL^{m+1})_{\oplus}\right)(\vKs^{m+1})_{\ominus}\\
		&\le\sum_{e\in\E_h^i, e= K^*\cap L}\int_e\left((\bbeta\cdot\nn_{e})_{\oplus}(\vKs^{m+1})_{\oplus}-(\bbeta\cdot\nn_{e})_{\ominus}(\vKs^{m+1})_{\oplus}\right)(\vKs^{m+1})_{\ominus}.
	\end{align*}
	Since $(\vKs^{m+1})_{\oplus}(\vKs^{m+1})_{\ominus}=0$ then
	$$\aupw{\bbeta}{v^{m+1}_{\oplus}}{(\vKs^{m+1})_{\ominus}}\le 0.$$
	Therefore, from \eqref{scheme-v+},
	$$
	\vert K^*\vert \left(\delta_t \vKs^{m+1}\right)(\vKs^{m+1})_{\ominus}\ge0.
	$$
	
	On the other hand,
	$$
	0\le \vert K^*\vert \left(\delta_t \vKs^{m+1}\right)(\vKs^{m+1})_{\ominus} =
	-\frac{\vert K^*\vert }{\Delta t}\left((\vKs^{m+1})_{\ominus}^2+\vKs^m(\vKs^{m+1})_{\ominus}\right) \le 0,
	$$
	hence $(\vKs^{m+1})_{\ominus}=0$. Thanks to the choice of $K^*$, we can assure that $v^{m+1}\ge0$.
\end{proof}

\begin{teorema}[Existence of DG scheme \eqref{esquema:ecuacion_conveccion_parte_pos}]
	\label{thm:existencia_sol_esquema_ecuacion_conveccion_parte_pos}
	Assume that $\S_h=\Pd_0(\T_h)$. For any $v^m\in\S_h$, there is at least one solution $v^{m+1}\in\S_h$ of the scheme \eqref{esquema:ecuacion_conveccion_parte_pos}.
\end{teorema}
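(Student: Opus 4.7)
The scheme \eqref{esquema:ecuacion_conveccion_parte_pos} is a finite-dimensional nonlinear algebraic system on $\S_h=\Pd_0(\T_h)$, so the natural plan is to recast it as a zero-finding problem for a continuous self-map on $\S_h\simeq\mathbb{R}^N$ with $N=\#\T_h$ and invoke a standard consequence of Brouwer's fixed-point theorem. Concretely, I would define $\Psi:\S_h\to\S_h$ by
$$(\Psi(v),\bv)_{L^2(\Omega)}\defeq\int_\Omega\frac{v-v^m}{\Delta t}\,\bv+\aupw{\bbeta}{v_\oplus}{\bv},\qquad\bv\in\S_h.$$
Since the positive part $x\mapsto x_\oplus$ is Lipschitz continuous and $\aupw{\bbeta}{\cdot}{\cdot}$ is bilinear, $\Psi$ is continuous on $\S_h$, and its zeros are exactly the solutions of \eqref{esquema:ecuacion_conveccion_parte_pos}.

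The relevant Brouwer corollary states that a continuous $\Psi:\mathbb{R}^N\to\mathbb{R}^N$ with $(\Psi(v),v)\ge 0$ on some sphere $\|v\|=R$ vanishes somewhere in the closed ball of radius $R$. Testing with $\bv=v$ and using Cauchy--Schwarz yields
$$(\Psi(v),v)_{L^2}\ge\frac{\|v\|_{L^2}\bigl(\|v\|_{L^2}-\|v^m\|_{L^2}\bigr)}{\Delta t}-\bigl|\aupw{\bbeta}{v_\oplus}{v}\bigr|.$$
Using continuity of the upwind bilinear form on the finite-dimensional space $\S_h$ together with $\|v_\oplus\|_{L^2}\le\|v\|_{L^2}$, the last term can be bounded by $C_{\bbeta,h}\|v\|_{L^2}^2$ for a constant depending on $\|\bbeta\|_\infty$ and the mesh (via an inverse trace inequality). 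Taking $\|v\|_{L^2}=R$ large enough then produces the required non-negativity and supplies a $v^{m+1}\in\S_h$ with $\Psi(v^{m+1})=0$.

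The delicate point is this coercivity bound, because $C_{\bbeta,h}$ depends on $h$ through the inverse inequality and a direct application of the lemma imposes a mild implicit restriction $\Delta t^{-1}>C_{\bbeta,h}$. To close the argument without any CFL-type assumption I would exploit the scalar $P_0$ structure: for each $K\in\T_h$ the scheme reads
$$|K|\,\frac{v_K-v_K^m}{\Delta t}+\alpha_K(v_K)_\oplus=\sum_{L}\beta_K^L (v_L)_\oplus,$$
with $\alpha_K,\beta_K^L\ge 0$ depending only on $\bbeta$ and the mesh. The left-hand side is a strictly increasing, coercive, piecewise-affine function of $v_K$ and hence admits a continuous inverse, reducing \eqref{esquema:ecuacion_conveccion_parte_pos} to a fixed-point equation $w=\Phi(w)$ for $w:=v_\oplus\in\mathbb{R}^N_{\ge 0}$ with $\Phi$ continuous and monotone non-decreasing. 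The damping factor $1/(1+\Delta t\alpha_K/|K|)$ built into $\Phi$ then renders a sufficiently large box $[0,M]^N$ invariant (uniformly in $\Delta t$, using also the slip condition to handle boundary contributions), and a final application of Brouwer to $\Phi$ produces the desired $w$, from which $v^{m+1}$ is recovered by the scalar inversion above.
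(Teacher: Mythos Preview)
Your route via Brouwer is genuinely different from the paper's, which uses Leray--Schauder. However, your second argument---the one meant to remove the CFL restriction---has a real gap at the box-invariance step. In your elementwise form one has $\alpha_K=\sum_{e\subset\partial K,\,e\in\E_h^i}\int_e(\bbeta\cdot\nn_K)_\oplus$ and $\sum_L\beta_K^L=\sum_{e\subset\partial K,\,e\in\E_h^i}\int_e(\bbeta\cdot\nn_K)_\ominus$, so that (using the slip condition on boundary edges)
\[
\alpha_K-\sum_L\beta_K^L=\int_{\partial K}\bbeta\cdot\nn_K=\int_K\nabla\cdot\bbeta.
\]
For $w\in[0,M]^N$ and in the case $v_K\ge 0$ your map yields
\[
\Phi(w)_K \le \frac{M\sum_L\beta_K^L + |K|\,v_K^m/\Delta t}{|K|/\Delta t + \alpha_K},
\]
and $\Phi(w)_K\le M$ amounts to $|K|\,v_K^m/\Delta t\le M\bigl(|K|/\Delta t+\int_K\nabla\cdot\bbeta\bigr)$. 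If $\int_K\nabla\cdot\bbeta<0$ and $\Delta t$ is large enough that $|K|/\Delta t+\int_K\nabla\cdot\bbeta\le 0$, no $M$ works when $v_K^m>0$. The ``damping factor'' $|K|/\Delta t+\alpha_K$ in the denominator does not compensate the possibly larger incoming-flux coefficient $\sum_L\beta_K^L$ in the numerator; the slip condition controls only boundary edges, not this interior balance. Thus your second argument, like the first, implicitly imposes a time-step restriction and does not deliver the unconditional existence claimed.

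The paper sidesteps this by a different a priori bound. It linearizes, defining $T(\widehat v)=v$ as the unique solution of the scheme with $\widehat v_\oplus$ frozen, and applies Leray--Schauder. To bound the set $\{v=\alpha T(v)\}$ it tests with $\overline v=1$ to obtain $\int_\Omega v=\alpha\int_\Omega v^m$, and reruns the positivity argument of Theorem~\ref{prop:positividad_ecuacion_conveccion_parte_pos} on the $\alpha$-scaled equation to get $v\ge 0$ (tacitly assuming $v^m\ge 0$). Together these give $\|v\|_{L^1}\le\|v^m\|_{L^1}$ uniformly in $\alpha$, with no dependence on $\Delta t$, $h$, or $\nabla\cdot\bbeta$. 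The combination \emph{mass conservation plus positivity}, rather than any coercivity or contraction estimate, is the device you are missing.
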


\begin{proof}
	Consider the following well-known theorem:
	\vspace{-2em}
	\begin{quote}
		\begin{teorema}[Leray-Schauder fixed point theorem]
			\label{thm:Leray-Schauder}
			Let $\X$ be a Banach space and let $T\colon\X\longrightarrow\X$ be a continuous and compact operator. If the set $$\{x\in\X\colon x=\alpha \,T(x)\quad\text{for some } 0\le\alpha\le1\}$$ is bounded (with respect to $\alpha$), then $T$ has  at least one fixed point.
		\end{teorema}
		
	\end{quote}
	Given a function $w\in\Pd_0(\T_h)$ with $w\ge0$, we are going to define the operator $T\colon \Pd_0(\T_h)\longrightarrow\Pd_0(\T_h)$ such that $T(\widehat{v})=v$ where $v$ is the unique solution of the linear scheme:
	\begin{align}
		\label{esquema:ecuacion_conveccion_Leray-Schauder}
		v\in \Pd_0(\T_h), \quad 		\int_\Omega\frac{v-w}{\Delta t}\overline{v}&=-\aupw{\bbeta}{\widehat{v}_{\oplus}}{\overline{v}},\quad\forall\overline{v}\in \Pd_0(\T_h).
	\end{align}
	The idea is to use the Leray-Schauder fixed point theorem \ref{thm:Leray-Schauder} to the operator $T$.  First of all, $T$ is well defined.
	
	Secondly, we will check that $T$ is continuous. Let $\{\widehat{v}_j\}_{j\in\N}\subset\Pd_0(\T_h)$ be a sequence such that $\lim_{j\to\infty}\widehat{v}_j=\widehat{v}$. Taking into account that all norms are equivalent in $\Pd_0(\T_h)$ since it is a finite-dimensional space, the convergence $\widehat v_j\to \widehat v$ is equivalent to the convergence elementwise $ (\widehat v_j)_K\to \widehat v_K$ for every $K\in\T_h$ (this may be seen, for instance, by using the norm $\norma{\cdot}_{L^\infty(\Omega)}$). Taking limits when $j\to \infty$ in the scheme \eqref{esquema:ecuacion_conveccion_Leray-Schauder} (with $\widehat{v}\defeq\widehat{v}_j$ and $v\defeq T(\widehat v_j)$),
	and using the notion of convergence elementwise, we get that $$\lim_{j\to \infty} T(\widehat v_j)=T(\widehat v)=T\left(\lim_{j\to \infty}\widehat v_j\right),$$ hence $T$ is continuous.
	
	In particular, as $T$ is a continuous operator defined over $\Pd_0(\T_h)$, which is finite-dimensional, $T$ is also compact.
	
	Finally, let us prove that the set $$B=\{v\in\Pd_0(\T_h)\colon v=\alpha \,T(v)\quad\text{for some } 0\le\alpha\le1\}$$ is bounded. The case $\alpha=0$ is trivial so we will assume that $\alpha\in(0,1]$. If $v\in B$, then $v$ is the solution of the scheme
	\begin{align}
		\label{esquema:ecuacion_conveccion_Leray-Schauder_alpha}
		v\in \Pd_0(\T_h), \quad 		\int_\Omega\frac{v-\alpha w}{\Delta t}\overline{v}&=-\alpha\,\aupw{\bbeta}{v_{\oplus}}{\overline{v}},\quad\forall\overline{v}\in \Pd_0(\T_h).
	\end{align}
	
	Now, testing \eqref{esquema:ecuacion_conveccion_Leray-Schauder_alpha} by $\overline v=1$, we get that $$\int_\Omega v=\alpha\int_\Omega w,$$ and, since $w\ge 0$, and since it can be proved that $v\ge 0$ using the same arguments than in Theorem  \ref{prop:positividad_ecuacion_conveccion_parte_pos}, we get that $$\norma{v}_{L^1(\Omega)}\le \norma{w}_{L^1(\Omega)}.$$ Hence, since $\Pd_0(\T_h)$ is a finite-dimensional space where the norms are equivalent, we have proved that $B$ is bounded.
	Thus, using the Leray-Schauder fixed point theorem \ref{thm:Leray-Schauder}, there is a solution of the scheme \eqref{esquema:ecuacion_conveccion_parte_pos}.
\end{proof}

Let us focus on the the following linear scheme (without
truncating $v$ by its positive part, $v_{\oplus}$): Given $v^m\in\Pd_0(\T_h)$, find $v^{m+1}\in \Pd_0(\T_h)$ such that, for every $\overline{v}\in \Pd_0(\T_h)$:
\begin{align}
	\label{esquema:ecuacion_conveccion}
	\int_\Omega\delta_t v^{m+1}\overline{v} +\aupw{\bbeta}{v^{m+1}}{\overline{v}}&=0.
\end{align}
It is well-known (see e.g.~\cite{di_pietro_ern_2012} and references
therein) that \eqref{esquema:ecuacion_conveccion} has a unique solution.
Since we have shown positivity of the nonlinear truncated scheme
\eqref{esquema:ecuacion_conveccion_parte_pos},  then any solution
of \eqref{esquema:ecuacion_conveccion_parte_pos} is the unique
solution of \eqref{esquema:ecuacion_conveccion}. This argument   implies uniqueness of  \eqref{esquema:ecuacion_conveccion_parte_pos} and
positivity  of~(\ref{esquema:ecuacion_conveccion}).

\begin{corolario}[Linear DG scheme  \eqref{esquema:ecuacion_conveccion} preserves  positivity]
	For any $v^m\in\Pd_0(\T_h)$ with $v^m\ge 0$ in $\overline\Omega$, the unique solution $v^{m+1}$ of the linear scheme \eqref{esquema:ecuacion_conveccion} is positive, i.e. $v^{m+1}\ge0$ in $\overline\Omega$.
\end{corolario}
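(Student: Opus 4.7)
The plan is to combine the three results already established for scheme \eqref{esquema:ecuacion_conveccion_parte_pos} with a compatibility observation: the nonlinear truncated scheme and the linear scheme coincide on nonnegative functions. Thus a nonnegative solution of the truncated scheme is automatically a solution of the linear one, and uniqueness for the linear scheme forces the unique solution to be that nonnegative one.

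More concretely, first I would invoke Theorem~\ref{thm:existencia_sol_esquema_ecuacion_conveccion_parte_pos} to produce at least one $\widetilde{v}\in\Pd_0(\T_h)$ that solves the truncated scheme \eqref{esquema:ecuacion_conveccion_parte_pos} starting from the given nonnegative $v^m$. Next, I would apply Theorem~\ref{prop:positividad_ecuacion_conveccion_parte_pos} to conclude that $\widetilde{v}\ge 0$ in $\overline\Omega$.

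The third step is the compatibility observation: since $\widetilde{v}\ge 0$, we have $\widetilde{v}_\oplus=\widetilde{v}$, so the variational identity defining \eqref{esquema:ecuacion_conveccion_parte_pos}, namely
\begin{equation*}
\int_\Omega \frac{\widetilde{v}-v^m}{\Delta t}\,\overline{v} + \aupw{\bbeta}{\widetilde{v}_\oplus}{\overline{v}} = 0 \quad \forall \overline{v}\in \Pd_0(\T_h),
\end{equation*}
collapses exactly to \eqref{esquema:ecuacion_conveccion} with $v^{m+1}$ replaced by $\widetilde{v}$. Therefore $\widetilde{v}$ is \emph{a} solution of the linear scheme. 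Finally, the cited uniqueness of the linear scheme (see~\cite{di_pietro_ern_2012}) forces $v^{m+1}=\widetilde{v}$, which yields $v^{m+1}\ge 0$.

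There is no real obstacle here; the only point requiring care is to state the logic in the correct order, taking existence of the truncated scheme (rather than of the linear scheme) as the starting ingredient and using linear-scheme uniqueness only at the very end to transfer the positivity obtained for the truncated scheme back to \eqref{esquema:ecuacion_conveccion}.
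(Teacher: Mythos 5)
Your argument is exactly the one the paper uses: produce a solution of the truncated scheme via Theorem~\ref{thm:existencia_sol_esquema_ecuacion_conveccion_parte_pos}, show it is nonnegative via Theorem~\ref{prop:positividad_ecuacion_conveccion_parte_pos}, observe that nonnegativity makes the truncation inactive so it also solves the linear scheme, and invoke uniqueness of \eqref{esquema:ecuacion_conveccion} to identify it with $v^{m+1}$. The proposal is correct and matches the paper's reasoning step for step.
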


\subsection{Linear convection  with incompressible velocity and maximum principle}
\label{sec:linear-conv-incompressiblity}
Now let us focus on the particular case of the conservation problem
\eqref{problema:conveccion}
where
$\bbeta\colon\overline\Omega\to\Rset^d$ is continuous and \textbf{incompressible},
i.e. $\nabla\cdot\bbeta=0$ in $\Omega$. In this case,  the
	solution of the problem
	\eqref{problema:conveccion},
	satisfies the
	following maximum principle (it can be proved, for instance, using the characteristics method as in Remark \ref{rmrk:positividad_modelo_conveccion_lineal}):
$$\min_{\overline\Omega}{v_0}\le v\le\max_{\overline\Omega}{v_0}\quad \hbox{in $ \overline\Omega\times (0,T) .$}$$

We are going to show that
the solution of the linear scheme \ref{esquema:ecuacion_conveccion} (without truncating $v$) preserves this  maximum principle. The proof is based on the fact that, as consequence of the divergence theorem, one has
\begin{align}
	\label{propiedad_flujo_incompresible}
	\int_{\partial K}\bbeta\cdot\nn_{K}=0,\quad\forall K\in\T_h .
\end{align}

\begin{proposicion}[Linear DG \eqref{esquema:ecuacion_conveccion} preserves the maximum principle]
	\label{thm:positividad_DG_conveccion_incompresible}
	For any $v^m\in\Pd_0(\T_h)$, the solution $v^{m+1}\in \Pd_0(\T_h)$ of the scheme \eqref{esquema:ecuacion_conveccion} satisfies the maximum principle, that is $$\min_{\overline\Omega}{v^m}\le v^{m+1}\le\max_{\overline\Omega}{v^m}\quad \text{in }{\overline\Omega}.$$
\end{proposicion}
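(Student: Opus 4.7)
The plan is to reduce the maximum principle to the positivity result already established in the Corollary following Theorem~\ref{thm:existencia_sol_esquema_ecuacion_conveccion_parte_pos}, via the standard \emph{constant-shift} trick. The only nontrivial ingredient is that, for $v^{m+1}\in\Pd_0(\T_h)$, the upwind bilinear form $\aupw{\bbeta}{\cdot}{\cdot}$ is invariant under adding a constant to its first argument, precisely because $\bbeta$ is incompressible and satisfies the slip condition \eqref{slipcondition}.

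First I would establish the following key identity: for every constant $c\in\Rset$ and every $\bv\in\Pd_0(\T_h)$,
\begin{equation*}
\aupw{\bbeta}{c}{\bv}=0.
\end{equation*}
Starting from \eqref{Up-Bilin-b} with $v\equiv c$, the interior-edge sum equals
\begin{equation*}
c\sum_{e\in\E_h^i,\, e=K\cap L}\int_e(\bbeta\cdot\nn_e)\salto{\bv}
=c\sum_{K\in\T_h}\bv_K\int_{\partial K\setminus\partial\Omega}(\bbeta\cdot\nn_K),
\end{equation*}
after reorganising edge contributions element-wise (the two sides of an interior edge being weighted by $\bv_K$ with the outer normal of each element). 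Using the slip condition $\bbeta\cdot\nn=0$ on $\partial\Omega$ the boundary integrals vanish, so by the divergence theorem together with \eqref{propiedad_flujo_incompresible} each $\int_{\partial K}(\bbeta\cdot\nn_K)=\int_K\div\bbeta=0$, giving the claim. By bilinearity this yields $\aupw{\bbeta}{v+c}{\bv}=\aupw{\bbeta}{v}{\bv}$ for any $v\in\Pd_0(\T_h)$ and any constant $c$.

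Next, set $c_{\min}=\min_{\overline\Omega}v^m$ and define $w^{m+1}\defeq v^{m+1}-c_{\min}$, $w^m\defeq v^m-c_{\min}\in\Pd_0(\T_h)$ (with $w^m\ge 0$). Since $\delta_t w^{m+1}=\delta_t v^{m+1}$ and by the invariance above $\aupw{\bbeta}{w^{m+1}}{\bv}=\aupw{\bbeta}{v^{m+1}}{\bv}$, the function $w^{m+1}$ solves the linear scheme \eqref{esquema:ecuacion_conveccion} with initial datum $w^m\ge 0$. Applying the positivity corollary yields $w^{m+1}\ge 0$ in $\overline\Omega$, that is $v^{m+1}\ge\min_{\overline\Omega}v^m$.

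For the upper bound I would repeat the argument with $c_{\max}=\max_{\overline\Omega}v^m$ and $z^{m+1}\defeq c_{\max}-v^{m+1}$: using the invariance identity together with linearity of $\aupw{\bbeta}{\cdot}{\cdot}$ in its first slot, $z^{m+1}$ satisfies the same scheme with initial datum $z^m=c_{\max}-v^m\ge 0$, hence $z^{m+1}\ge 0$ and $v^{m+1}\le\max_{\overline\Omega}v^m$. The only step requiring any real work is the invariance identity; once it is in place, everything else is a direct reduction to the already-proved positivity result.
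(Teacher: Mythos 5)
Your proof is correct, but it takes a genuinely different route from the paper's. The paper argues directly: it tests the scheme with $(\vKs^{m+1}-\vmin)_{\ominus}$ (resp.\ $(\vKs^{m+1}-\vmax)_{\oplus}$) localized on an element $K^*$ where $v^{m+1}$ attains its minimum (resp.\ maximum), bounds the upwind term using the monotonicity of the flux in $\vL^{m+1}\ge \vKs^{m+1}$ together with $\int_{\partial K^*}\bbeta\cdot\nn_{K^*}=0$, and concludes from the sign of the resulting quadratic expression that the relevant truncation vanishes. You instead prove the invariance $\aupw{\bbeta}{v+c}{\bv}=\aupw{\bbeta}{v}{\bv}$ for constants $c$ --- which is indeed the one place where incompressibility and the slip condition enter --- and then reduce both bounds to the positivity corollary of the linear scheme \eqref{esquema:ecuacion_conveccion} by shifting. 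Your verification of the key identity is sound: $(\bbeta\cdot\nn_e)_{\oplus}-(\bbeta\cdot\nn_e)_{\ominus}=\bbeta\cdot\nn_e$, the element-wise regrouping of the edge sum is legitimate because the two sides of an interior edge carry the respective outward normals, the boundary contributions are killed by $\bbeta\cdot\nn=0$ on $\partial\Omega$, and \eqref{propiedad_flujo_incompresible} finishes the argument; the appeal to uniqueness of the linear scheme makes the reduction non-circular, since the positivity corollary is established before and independently of this proposition. What your route buys is modularity and brevity: the incompressibility hypothesis is isolated in a single lemma and the test-function computation is not repeated for the upper bound. What the paper's route buys is that it does not rely on linearity of the form in its first argument, so the same template transfers essentially verbatim to the nonlinear upwind form with degenerate mobility in Theorem~\ref{thm:principio_del_maximo_DG_Cahn-Hilliard}, where a constant-shift reduction is unavailable.
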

\begin{proof}
	
	First, we will prove that  $v^{m+1}\ge \min_{\overline\Omega} v^m$. Let us denote $\vmin=\min_{\overline\Omega} v^m$.
	Taking the following test function
	\begin{align*}
		\overline{v}=
		\begin{cases}
			(\vKs^{m+1}-\vmin)_{\ominus}&\text{in }K^*\\
			0&\text{out of }K^*
		\end{cases},
	\end{align*}
	where $K^*$ is an element of $\T_h$ such that the value $\vKs^{m+1}=\min_{K\in\T_h}\vK^{m+1}$, the scheme \eqref{esquema:ecuacion_conveccion} becomes
	\begin{align*}
		\vert K^*\vert \delta_t \vKs^{m+1}(\vKs^{m+1}-\vmin)_{\ominus}=-\aupw{\bbeta}{v^{m+1}}{(\vKs^{m+1}-\vmin)_{\ominus}}.
	\end{align*}
	Now, as we have chosen $K^*$ we can assure that $\vL^{m+1}\ge \vKs^{m+1}$ for all $L\in\T_h$. Hence, using \eqref{propiedad_flujo_incompresible}, one has
	\begin{align*}
		\aupw{\bbeta&}{v^{m+1}}{(\vKs^{m+1}-\vmin)_{\ominus}}=\\&=\sum_{e\in\E_h^i, e=K^*\cap L}\int_e\left((\bbeta\cdot\nn_{e})_{\oplus}\vKs^{m+1}-(\bbeta\cdot\nn_{e})_{\ominus}\vL^{m+1}\right)(\vKs^{m+1}-\vmin)_{\ominus}\\
		&\le\sum_{e\in\E_h^i, e= K^*\cap L}\int_e\left((\bbeta\cdot\nn_{e})_{\oplus}\vKs^{m+1}-(\bbeta\cdot\nn_{e})_{\ominus}\vKs^{m+1}\right)(\vKs^{m+1}-\vmin)_{\ominus}\\&=\vKs^{m+1}(\vKs^{m+1}-\vmin)_{\ominus}\sum_{e\in\E_h^i, e=K^* \cap L}\int_e(\bbeta\cdot\nn_{e})=0.
	\end{align*}
	Therefore,
	$$
	\vert K^*\vert \delta_t \vKs^{m+1}(\vKs^{m+1}-\vmin)_{\ominus}\ge0.
	$$
	Moreover,
	\begin{align*}
		0&\le
		\vert K^*\vert (\delta_t \vKs^{m+1})(\vKs^{m+1}-\vmin)_{\ominus}
		\\&=\frac{\vert K^*\vert }{\Delta t}\left((\vKs^{m+1}-\vmin)+(\vmin-\vKs^m)\right)(\vKs^{m+1}-\vmin)_{\ominus}
		\\
		&= \frac{\vert K^*\vert }{\Delta t}\left(-(\vKs^{m+1}-\vmin)_{\ominus}^2+(\vmin-\vKs^m)(\vKs^{m+1}-\vmin)_{\ominus}\right)
		\le0,
	\end{align*}
	then	we have proved that $(\vKs^{m+1}-\vmin)_{\ominus}=0$. Hence, from the choice of $K^*$, we can assure  $v^{m+1}\ge\min_{\overline{\Omega}}v^m$.
	
	Now, we will prove that  $v^{m+1}\le \max_{\overline\Omega} v^m$.
	Let us denote $\vmax=\max_{\overline\Omega} v^m$. Taking the following test function
	\begin{align*}
		\overline{v}=
		\begin{cases}
			(\vKs^{m+1}-\vmax)_{\oplus}&\text{in }K^*\\
			0&\text{out of }K^*
		\end{cases},
	\end{align*}
	where $K^*$ is an element of $\T_h$ such that the value $\vKs^{m+1}=\max_{K\in\T_h}\vK^{m+1}$ and using similar arguments to those above, we arrive at
	$$
	\vert K^*\vert \delta_t \vKs^{m+1}(\vKs^{m+1}-\vmax)_{\oplus}\le0.
	$$
	Moreover,
	\begin{align*}
		0&\le\frac{\vert K^*\vert }{\Delta t}\left((\vKs^{m+1}-\vmax)^2_{\oplus}+(\vmax-\vKs^m)(\vKs^{m+1}-\vmax)_{\oplus}\right)\\&=\frac{\vert K^*\vert }{\Delta t}\left((\vKs^{m+1}-\vmax)+(\vmax-\vKs^m)\right)(\vKs^{m+1}-\vmax)_{\oplus}\\&=\vert K^*\vert \delta_t \vKs^{m+1}(\vKs^{m+1}-\vmax)_{\oplus}\le0,
	\end{align*}
	then we have proved that $(\vKs^{m+1}-\vmax)_{\oplus}=0$. From the choice of $K^*$, we can assure that $v^{m+1}\le\max_{\overline\Omega}v^m$.
\end{proof}

\section{Cahn-Hilliard with degenerate mobility and incompressible convection}
\label{sec:cahn-hilliard}

At this point, given $\vv\colon\Omega\times (0,T)\longrightarrow\Rset^d$
a continuous incompressible velocity field satisfying the slip condition (\ref{slipcondition}), we are in position to consider the CCH problem~(\ref{problema:cahn-hilliard+adveccion}).

\begin{observacion}
	\label{rmk:ppo_maximo_CH}
	Any smooth enough solution $(u,\mu)$ of the CCH model \eqref{problema:cahn-hilliard+adveccion}
		satisfies the  maximum principle  $0\le u\le 1$ in $\overline\Omega\times (0,T)$ whenever $0\le u_0\le 1$ in $\overline\Omega$. The proof of this statement is a straightforward consequence of  Remark \ref{rmrk:positividad_modelo_conveccion_lineal}.
	\begin{itemize}
		\item To prove that $u\ge 0$, it is enough  to notice that $u$ is the solution of
		\eqref{problema:conveccion}
		with $\bbeta\defeq-(1-u)\nabla\mu+\vv$ and to  use Remark \ref{rmrk:positividad_modelo_conveccion_lineal}.
		\item To check that $u\le 1$ we make the  change of variables $w\defeq 1-u$ and, using that $\nabla\cdot\vv=0$, notice that $w$ is the  solution of
		\eqref{problema:conveccion}
		with $\bbeta\defeq (1-w)(\nabla\mu+\vv)$. Then, we just use  Remark \ref{rmrk:positividad_modelo_conveccion_lineal}.
	\end{itemize}
\end{observacion}

Owing to this maximum principle, the following $C^2(\mathbb{R})$ truncated potential {is} considered
\begin{equation} \label{truncatedPotential}
	F(u)\defeq
	\frac{1}{4} \begin{cases}
		u^2 & u<0,\\
		u^2(1-u)^2 & u\in[0,1],\\
		(u-1)^2 & u>1.
	\end{cases}
\end{equation}
This truncated potential will allow us to define a linear time discrete convex-splitting scheme satisfying an energy law (where the energy is decreasing in the case $\vv=0$), see Subsection~\ref{sec:time-semidis}.

Assume $0\le u_0\le1$ in $\overline\Omega$. The weak formulation of  problem \eqref{problema:cahn-hilliard+adveccion} consists of finding $(u,\mu)$ such that,  $u(t)\in H^1(\Omega)$,    $\mu(t)\in  H^1(\Omega)$ with $\partial_t u(t)\in H^1(\Omega)'$, $M(u(t))\nabla\mu(t)\in L^2(\Omega)$ a.e. $t\in(0,T)$, and
satisfying the following variational problem a.e. $t\in(0,T)$ for every $\overline{\mu},\overline{u}\in H^1(\Omega)$:
\begin{subequations}
	\label{problema:cahn-hilliard+adveccion_form_var}
	\begin{align}
		\label{problema:cahn-hilliard+adveccion_form_var-A}
		\langle \partial_t u(t),\overline{\mu} \rangle &=-\escalarLd{M(u(t))\nabla\mu(t)-u(t)\vv(t)}{\nabla\overline{\mu}},\\
		\escalarL{\mu(t)}{\overline{u}}&=\varepsilon^2 \escalarLd{\nabla u(t)}{\nabla\overline{u}}+\escalarL{ F'(u(t))}{\overline{u}},
	\end{align}
\end{subequations}
with the initial condition $u(0)=u_0$ in $\Omega$.

\begin{observacion} By taking $\overline{\mu}=1$ in~\eqref{problema:cahn-hilliard+adveccion_form_var-A}, any solution $u$
	of~\eqref{problema:cahn-hilliard+adveccion_form_var}
	conserves the mass, because
	$$\frac{d}{dt}\int_\Omega u(x,t)dx=0.$$
\end{observacion}

\begin{observacion}  By taking $\overline{\mu}=\mu(t)$ and $\overline{u}=\partial_t u(t) $ in \eqref{problema:cahn-hilliard+adveccion_form_var}, and adding the resulting expressions, one has that any solution $(u,\mu)$
	of \eqref{problema:cahn-hilliard+adveccion_form_var}
	satisfies the following  energy law
	\begin{align}
		\label{ley_energia_continua_cahn-hilliard+adveccion}
		\frac{d }{dt}E(u(t))&+\int_\Omega M(u(x,t))\vert\nabla\mu(x,t)\vert^2dx
		= \int_\Omega u(x,t)\, \vv(x,t) \cdot \nabla\mu(x,t) dx ,
	\end{align}
	where $E\colon H^1(\Omega)\longrightarrow\Rset$ is the  Helmholtz free energy
	\begin{align}
		\label{energia_cahn-hilliard+adveccion}
		E(u)&\defeq\int_\Omega\left(\frac{\varepsilon^2}{2}\vert\nabla u\vert^2+ F(u)\right)dx.
	\end{align}
	Indeed, by applying the chain rule we get
	\begin{align*}
		\frac{d }{dt} E(u(t))&=\left\langle\frac{\delta E}{\delta u}(u(t)) ,\partial_t u(t)\right\rangle =\int_\Omega\mu(x,t)\partial_t u(x,t)dx
		\\&=-\int_\Omega M(u(x,t))\vert\nabla\mu(x,t)\vert^2dx + \int_\Omega u(x,t)\, \vv(x,t) \cdot \nabla\mu(x,t) dx.
	\end{align*}
	In particular, in the CH case ($\vv=0$), the energy $E(u(t))$ is dissipative. Contrarily, to the best knowledge of the authors, {for} the CCH problem with $\vv\neq 0$, there is no evidence of the existence of a dissipative energy.
\end{observacion}

\subsection{Convex splitting time discretization}
\label{sec:time-semidis}
{Now we are ready to focus on a convex splitting time  discretization (of Eyre's type~\cite{eyre_1998_unconditionally} ) of~(\ref{problema:cahn-hilliard+adveccion_form_var}). Specially, we } decompose the truncated potential \eqref{truncatedPotential} as follows:
$$
F(u)= F_{\text{i}}(u)+ F_\text{e}(u),
$$
where
$$F_\text{i}(u)\defeq
\frac{3}{8} u^2 ,
\qquad
F_\text{e}(u)\defeq \frac{1}{4} \begin{cases}
	-\frac{1}{2}u^2 & u<0,\\
	u^4-2u^3-\frac{1}{2}u^2 & u\in[0,1],\\
	1-2u-\frac{1}{2}u^2 & u>1.
\end{cases}$$
It can be easily proved that $F_\text{i}(u)$ is a convex operator, which will be
treated implicitly whereas $F_\text{e}(u)$ is a concave operator that will be
treated explicitly. Then we consider the following convex
$E_\text{i}(u(t))$ and concave $E_\text{e}(u(t))$ energy terms:
\begin{align*}
E_\text{i}(u)&\defeq \frac{\varepsilon^2}{2} \int_\Omega\vert\nabla u(x)\vert^2dx+\int_\Omega F_\text{i}(u(x))dx,\\
E_\text{e}(u)&\defeq \int_\Omega F_\text{e}(u(x))dx,
\end{align*}
such that the free energy~(\ref{energia_cahn-hilliard+adveccion}) is split as $E(u)=E_\text{i}(u)+E_\text{e}(u)$.

Finally we define the following time discretization of (\ref{problema:cahn-hilliard+adveccion_form_var}):
find $u^{m+1}\in H^1(\Omega)$ and $\mu^{m+1}\in H^1(\Omega)$ such that, for every $\overline{\mu},\overline{u}\in H^1(\Omega)$:
\begin{subequations}
\label{esquema_tiempo:cahn-hilliard+adveccion}
\begin{align}
	\label{esquema_tiempo:cahn-hilliard+adveccion_1}
	\escalarL{\delta_tu^{m+1}}{\overline{\mu}}&=-\escalarLd{M(u^{m+1})_{\oplus}\nabla\mu^{m+1}-u^{m+1}\vv(t_{m+1})}{\nabla\overline{\mu}},\\
	\label{esquema_tiempo:cahn-hilliard+adveccion_2}
	\escalarL{\mu^{m+1}}{\overline{u}}&=\varepsilon^2 \escalarLd{\nabla u^{m+1}}{\nabla\overline{u}}+\escalarL{ f(u^{m+1},u^m)}{\overline{u}},
\end{align}
\end{subequations}
where we denote the convex-implicit and concave-explicit linear approximation of the potential as follows
\begin{align}
f(u^{m+1},u^m)&\defeq F_\text{i}'(u^{m+1})+F_\text{e}'(u^m)\nonumber\\&= \frac{3}{4}u^{m+1}+
\frac{1}{4}
\begin{cases}\label{def-f(u,u0)}
	-u^m, &  u^m\in(-\infty,0),\\
	4 (u^m)^3-6(u^m)^2-u^m, & u^m\in[0,1],\\
	-\left(u^m+2\right), & u^m\in(1,+\infty).
\end{cases}
\end{align}
Notice that the positive part of the mobilitity has been taken in \eqref{esquema_tiempo:cahn-hilliard+adveccion_1}, regarding the Remark \ref{rmk:ppo_maximo_CH}, in order to prevent possible overshoots of the solution $u^{m+1}$ beyond the interval $[0,1]$.

\subsubsection{Discrete energy law}
\label{sec:ley_energia}

By adding
\eqref{esquema_tiempo:cahn-hilliard+adveccion_1} and
\eqref{esquema_tiempo:cahn-hilliard+adveccion_2} for
$\overline{\mu}=\mu^{m+1}$ and $\overline{u}=\delta_t u^{m+1}$ in \eqref{esquema_tiempo:cahn-hilliard+adveccion}, we get:
\begin{multline}
\int_\Omega M(u^{m+1})_{\oplus}\vert \nabla\mu^{m+1}\vert ^2
+\varepsilon^2\escalarLd{\nabla u^{m+1}}{\delta_t\nabla u^{m+1}} \\+\escalarL{f(u^{m+1},u^m)}{\delta_t u^{m+1}}
= \int_\Omega u^{m+1} \vv(\cdot,t_{m+1}) \cdot \nabla\mu^{m+1} .
\end{multline}
Taking into account that $$\varepsilon^2\escalarLd{\nabla u^{m+1}}{\delta_t \nabla u^{m+1}}=\frac{\varepsilon^2}{2}\delta_t\int_\Omega \vert \nabla u^{m+1}\vert ^2 +\frac{k\varepsilon^2}{2}\int_\Omega\vert \delta_t\nabla u^{m+1}\vert ^2,$$
and by adding and substracting $\delta_t\int_\Omega F(u^{m+1})$,
we get the following equality
\begin{multline*}
\delta_t E(u^{m+1})+\int_\Omega M(u^{m+1})_{\oplus}\vert \nabla\mu^{m+1}\vert ^2+\frac{k\varepsilon^2}{2}\int_\Omega\vert \delta_t\nabla u^{m+1}\vert ^2\\+\escalarL{f(u^{m+1},u^m)}{\delta_t u^{m+1}}-\delta_t\int_\Omega F(u^{m+1})
= \int_\Omega u^{m+1} \vv(\cdot,t_{m+1}) \cdot \nabla\mu^{m+1},
\end{multline*}
where $E(u)$ is defined in \eqref{energia_cahn-hilliard+adveccion}.

Then, using the Taylor theorem we get
\begin{align*}
F_\text{i}(u^m)&=F_\text{i}(u^{m+1})+F_\text{i}'(u^{m+1})(u^m-u^{m+1})+\frac{F_\text{i}''(u^{m+\xi})}{2}(u^m-u^{m+1})^2,\\
F_\text{e}(u^{m+1})&=F_\text{e}(u^{m})+F_\text{e}'(u^{m})(u^{m+1}-u^{m})+\frac{F_\text{e}''(u^{m+\eta})}{2}(u^{m+1}-u^{m})^2,
\end{align*}
for certain $\xi,\eta\in(0,1)$ with $u^{m+\xi}=\xi u^{m+1}+(1-\xi)u^m$, $u^{m+\eta}=\eta u^{m+1}+(1-\eta)u^m$. Hence, adding these expressions and taking into consideration that $F(u)=F_\text{i}(u)+F_\text{e}(u)$ for every $u\in\Rset$, we arrive at
\begin{align*}
&F(u^{m+1})-F(u^m)=\\&=f(u^{m+1},u^m)(u^{m+1}-u^m)-\frac{F''_i(u^{m+\xi})-F''_e(u^{m+\eta})}{2}(u^{m+1}-u^m)^2.
\end{align*}

Furthermore, as
\begin{align*}
&\frac{F''_i(u^{m+\xi})-F''_e(u^{m+\eta})}{2k}=\\&=
\begin{cases}
	\frac{1}{2k}, & u^{m+\eta}\in(-\infty,0)\cup(1,+\infty),\\[0.33em]
	\displaystyle\frac{3-12(u^{m+\eta})^2+12(u^{m+\eta})+1}{8k}, & u^{m+\eta}\in[0,1],
\end{cases}
\\
&=\begin{cases}
	\frac{1}{2k} \geq 0, & u^{m+\eta}\in(-\infty,0)\cup(1,+\infty),\\[0.33em]
	\frac{1}{2k}\left(1-3u^{m+\eta}(u^{m+\eta}-1)\right)\geq 0, & u^{m+\eta}\in[0,1],
\end{cases}
\end{align*}
we have
$$\frac{F''_i(u^{m+\xi})-F''_e(u^{m+\eta})}{2k}\geq0,
$$
and finally
$$
\escalarL{f(u^{m+1},u^m)}{\delta_t u^{m+1}}-\int_\Omega\delta_t F(u^{m+1})\ge0.$$
Therefore, we arrive at the following result:
\begin{teorema}
Any solution
of the scheme \eqref{esquema_tiempo:cahn-hilliard+adveccion} satisfies the following \textbf{discrete energy law} \begin{multline}
	\delta_t E(u^{m+1})+\int_\Omega M(u^{m+1})_{\oplus}\vert \nabla\mu^{m+1}\vert ^2+\frac{k\varepsilon^2}{2}\int_\Omega\vert \delta_t\nabla u^{m+1}\vert ^2\\\le
	\int_\Omega u^{m+1} \vv(\cdot,t_{m+1}) \cdot \nabla\mu^{m+1}.
\end{multline}
In particular, if $\vv=0$ the time-discrete scheme is unconditionally energy stable, because $E(u^{m+1})\le E(u^{m})$.
\end{teorema}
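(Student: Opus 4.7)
The plan is to test \eqref{esquema_tiempo:cahn-hilliard+adveccion_1} with $\overline{\mu}=\mu^{m+1}$ and \eqref{esquema_tiempo:cahn-hilliard+adveccion_2} with $\overline{u}=\delta_t u^{m+1}$, then add the two identities. The first equation immediately produces the dissipation $\int_\Omega M(u^{m+1})_\oplus|\nabla\mu^{m+1}|^2$ on the left and the convection term $\int_\Omega u^{m+1}\vv(\cdot,t_{m+1})\cdot\nabla\mu^{m+1}$ on the right, so it remains to massage the sum of the $\varepsilon^2(\nabla u^{m+1},\delta_t\nabla u^{m+1})$ and $(f(u^{m+1},u^m),\delta_t u^{m+1})_{L^2(\Omega)}$ contributions into $\delta_t E(u^{m+1})+\frac{k\varepsilon^2}{2}\int_\Omega|\delta_t\nabla u^{m+1}|^2$ plus a nonnegative slack.

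For the Dirichlet piece I would use the pointwise polarization identity $a\cdot(a-b)=\frac{1}{2}(|a|^2-|b|^2)+\frac{1}{2}|a-b|^2$ applied to $a=\nabla u^{m+1}$, $b=\nabla u^m$ and divided by $k=\Delta t$. Integrating over $\Omega$ yields exactly $\varepsilon^2(\nabla u^{m+1},\delta_t\nabla u^{m+1})_{L^2(\Omega)}=\frac{\varepsilon^2}{2}\delta_t\int_\Omega|\nabla u^{m+1}|^2+\frac{k\varepsilon^2}{2}\int_\Omega|\delta_t\nabla u^{m+1}|^2$, accounting for the gradient part of $E$ together with the extra numerical dissipation that appears in the statement.

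For the potential piece the strategy is to apply second-order Taylor expansions to $F_\text{i}$ (from $u^{m+1}$ toward $u^m$, using $F_\text{i}''\ge 0$ implicitly) and to $F_\text{e}$ (from $u^m$ toward $u^{m+1}$), then add, obtaining
\begin{equation*}
F(u^{m+1})-F(u^m)=f(u^{m+1},u^m)(u^{m+1}-u^m)-\tfrac{1}{2}\bigl(F_\text{i}''(u^{m+\xi})-F_\text{e}''(u^{m+\eta})\bigr)(u^{m+1}-u^m)^2
\end{equation*}
for some intermediate points $u^{m+\xi},u^{m+\eta}$. The main obstacle — and the only step that is not automatic — is verifying the pointwise bound $F_\text{i}''-F_\text{e}''\ge 0$. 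Here I would do a case split following the piecewise definition of the truncated potential \eqref{truncatedPotential}: outside $[0,1]$ the difference reduces to the constant $1$, while on $[0,1]$ it becomes $1-3u^{m+\eta}(u^{m+\eta}-1)$, which is nonnegative because the product $u^{m+\eta}(u^{m+\eta}-1)\in[-1/4,0]$ on that interval. Dividing by $k$, integrating over $\Omega$ and rearranging then gives $(f(u^{m+1},u^m),\delta_t u^{m+1})_{L^2(\Omega)}\ge\delta_t\int_\Omega F(u^{m+1})$, and combining with the previous step produces the claimed inequality. The final assertion for $\vv=0$ is immediate: all three terms on the left are nonnegative and the right-hand side vanishes, so $E(u^{m+1})\le E(u^m)$.
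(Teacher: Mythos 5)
Your proposal is correct and follows essentially the same route as the paper: test with $\overline{\mu}=\mu^{m+1}$ and $\overline{u}=\delta_t u^{m+1}$, use the polarization identity for the gradient term, and control the potential term via the two Taylor expansions of $F_\text{i}$ and $F_\text{e}$ together with the case-by-case verification that $F_\text{i}''-F_\text{e}''\ge 0$ (constant $1$ outside $[0,1]$ and $1-3u(u-1)\ge 0$ on $[0,1]$). No gaps to report.
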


\subsection{Fully discrete scheme}
\label{sec:esquema_completamente_discreto}

At this point we are going to introduce  the  key idea for the spatial approximation: to treat equation \eqref{eq:cahn-hilliard+adveccion_u} as a conservative problem where there are two different fluxes (linear and nonlinear). In this sense, we propose an upwind DG scheme approximating the nonlinear flux $\FF(u)=M(u)\nabla\mu$ properly.

To this aim, we take for values $v\in\mathbb{R}$ the increasing and decreasing part of $M(v)_{\oplus}$, {denoted} respectively by $M^\uparrow(v)$ and $M^\downarrow(v)$, as follows:
\begin{subequations}
\begin{align*}
	M^\uparrow(v)&=\int_0^{v}\left(\partial_s \left(M(s)_{\oplus}\right)\right)_{\oplus}ds
	=\int_0^{\min(v,1)}M'(s)_{\oplus}ds=\int_0^{\min(v,1)}(1-2s)_{\oplus}ds,\\
	M^\downarrow(v)&=-\int_0^{v}\left(\partial_s \left(M(s)_{\oplus}\right)\right)_{\ominus}ds=-\int_0^{\min(v,1)} M'(s)_{\ominus}ds\\&=-\int_0^{\min(v,1)} (1-2s)_{\ominus}ds.
\end{align*}
\end{subequations}
Therefore,
\begin{align}
\label{movilidad_partes_creciente_decreciente}
M^\uparrow(v)=
\begin{cases}
	M(v)_{\oplus} & \text{if }v\le \frac{1}{2}\\[0.2em]
	M\left(\frac{1}{2}\right) & \text{if } v>\frac{1}{2}
\end{cases},~~
M^\downarrow(v)=
\begin{cases}
	0 & \text{if }v\le \frac{1}{2}\\
	M(v)_{\oplus}-M\left(\frac{1}{2}\right) & \text{if } v>\frac{1}{2}
\end{cases}.
\end{align}

Notice that $M^\uparrow(v)+ M^\downarrow(v) = M(v)_{\oplus} $.
We define the followig generalized upwind bilinear form to be applied for the nonlinear flux $\FF(u)=M(u)\bbeta$ where now $\bbeta$ can be discontinuous over $\E_h^i$ (in fact we will take $\bbeta=\nabla\mu$):
\begin{multline}
\label{Up-Bilin_gen}
\aupw{\bbeta}{M(v)_{\oplus}}{\overline{v}}\defeq -\int_\Omega (\bbeta\cdot\nabla\overline{v})M(v)_{\oplus}\\+\sum_{e\in\E_h^i,e=K\cap L}\int_e\left((\media{\bbeta}\cdot\nn_e)_{\oplus}(M^\uparrow(\vK)+M^\downarrow(\vL))\right.\\\left.-(\media{\bbeta}\cdot\nn_e)_{\ominus}(M^\uparrow(\vL)+M^\downarrow(\vK))\right)\salto{\overline{v}}.
\end{multline}

\begin{observacion}
We refer to \eqref{Up-Bilin_gen} as a generalized bilinear form since it generalizes the definition of \eqref{Up-Bilin} considering the case where $\bbeta$ may be discontinuous. If $\bbeta$ is continuous both definitions are equivalent.
\end{observacion}

Then, we propose the following fully discrete DG+Eyre scheme (named \textbf{DG-UPW}) for the model \eqref{problema:cahn-hilliard+adveccion}:

Find $u^{m+1}\in \Pd_0(\T_h)$, with $\mu^{m+1},w^{m+1}\in \Pc_1(\T_h)$, solving
\begin{subequations}
\label{esquema_DG_upw_Eyre_cahn-hilliard+adveccion}
\begin{align}
	\label{eq:esquema_DG_upw_Eyre_cahn-hilliard+adveccion_1}
	\escalarL{\delta_tu^{m+1}}{\overline{u}}&\nonumber\\&\hspace*{-6em}+\aupw{-\nabla\mu^{m+1}}{M(u^{m+1})_{\oplus}}{\overline{u}}+\aupw{\vv(t_{m+1})}{u^{m+1}}{\overline{u}}=0,\\
	\label{eq:esquema_DG_upw_Eyre_cahn-hilliard+adveccion_2}
	\escalarL{\mu^{m+1}}{\overline{\mu}}&=\varepsilon^2 \escalarL{\nabla w^{m+1}}{\nabla\overline{\mu}}+ \escalarL{f(u^{m+1},u^m)}{\overline{\mu}},\\
	\label{eq:esquema_DG_upw_Eyre_cahn-hilliard+adveccion_w}
	\escalarL{w^{m+1}}{\overline{w}}&=
	\escalarL{u^{m+1}}{\overline{w}},
\end{align}
\end{subequations}
for all $\overline{u}\in \Pd_0(\T_h)$ and  $\overline{\mu}, \overline{w}\in \Pc_1(\T_h)$.
Following the notation of the section \ref{seccion:conveccion_lineal},
\begin{align*}
\aupw{\vv}{u}{\overline{u}}&= \sum_{e\in\E_h^i, e=K\cap L}\int_e\left( (\vv\cdot\nn_e)_{\oplus}\uK-(\vv\cdot\nn_e)_{\ominus}\uL\right)\salto{\overline{u}}.\\
\end{align*}

In this scheme we have introduced a truncation of the function $M(u)$ taking its positive part $M(u)_{\oplus}$, which is consistent as the solution of the continuous model
\eqref{problema:cahn-hilliard+adveccion}
satisfies $0\le u \le 1$.

Notice that we have introduced a new continuous variable $w\in\Pc_1(\T_h)$ in
\eqref{eq:esquema_DG_upw_Eyre_cahn-hilliard+adveccion_w}. It can be seen as a regularization of the variable $u\in\Pc_0(\T_h)$, which is  used in the diffusion term  in \eqref{eq:esquema_DG_upw_Eyre_cahn-hilliard+adveccion_2} (which corresponds to the philic  term in the energy of the model \eqref{energia_cahn-hilliard+adveccion}). In fact, both variables $w^{m+1}$ and $u^{m+1}$ are approximations of $u(t_{m+1})$.

\begin{observacion}
We are using the same notation in the fully discrete scheme \eqref{esquema_DG_upw_Eyre_cahn-hilliard+adveccion} than the one we have used in the time-discrete scheme \eqref{esquema_tiempo:cahn-hilliard+adveccion}
given in the section \ref{sec:ley_energia},  satisfying an energy law.

Nevertheless, in this case we are changing the meaning of the equations since we are treating \eqref{eq:esquema_DG_upw_Eyre_cahn-hilliard+adveccion_1} as the $u$-equation  and \eqref{eq:esquema_DG_upw_Eyre_cahn-hilliard+adveccion_2} as the $\mu$-equation, contrary to computations done to reach the energy law.
This has been done for the purpose of preserving the maximum principle in the equation \eqref{eq:esquema_DG_upw_Eyre_cahn-hilliard+adveccion_1} and adequately approximating the laplacian term of the equation \eqref{eq:esquema_DG_upw_Eyre_cahn-hilliard+adveccion_2}.
\end{observacion}

\begin{observacion}
The boundary condition $\nabla w^{m+1}\cdot\nn=0$ on $\partial\Omega\times(0,T)$ is imposed implicitly by the term $\escalarL{\nabla w^{m+1}}{\nabla\overline{\mu}}$ in \eqref{eq:esquema_DG_upw_Eyre_cahn-hilliard+adveccion_2}.
\end{observacion}

\begin{observacion}
Since $f(\cdot,u^m)$ is linear, we have the following equality of the potential term of \eqref{eq:esquema_DG_upw_Eyre_cahn-hilliard+adveccion_2}:
$$\escalarL{f(w^{m+1},u^m)}{\overline{\mu}}=\escalarL{f(u^{m+1},u^m)}{\overline{\mu}}.$$
\end{observacion}

\begin{observacion}
The scheme \eqref{esquema_DG_upw_Eyre_cahn-hilliard+adveccion} is nonlinear, hence  we will have to use an iterative procedure, the Newton's method, to approach its solution.
\end{observacion}

\begin{proposicion}
The scheme \eqref{esquema_DG_upw_Eyre_cahn-hilliard+adveccion} conserves the mass of both $u^{m+1}$ and $w^{m+1}$ variables:
$$\int_\Omega u^{m+1}=\int_\Omega u^m\quad\text{and}\quad\int_\Omega w^{m+1}=\int_\Omega w^m.$$
\end{proposicion}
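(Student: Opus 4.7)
The strategy is to test each equation in the scheme by the constant function $\overline{u}=1$ (resp.\ $\overline{w}=1$), which is an admissible test function since the constants belong to $\Pd_0(\T_h)\cap\Pc_1(\T_h)$, and to exploit the fact that the upwind bilinear form $a_h^{\text{upw}}$ annihilates constants.

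First I would treat the phase variable. Setting $\overline{u}\equiv 1$ in \eqref{eq:esquema_DG_upw_Eyre_cahn-hilliard+adveccion_1}, two simplifications occur simultaneously: the volume integral in the definition \eqref{Up-Bilin_gen} vanishes because $\nabla\overline{u}=0$, and every edge integral vanishes because $\salto{\overline{u}}=0$ on interior edges for a globally constant function. Consequently both upwind contributions $\aupw{-\nabla\mu^{m+1}}{M(u^{m+1})_{\oplus}}{1}$ and $\aupw{\vv(t_{m+1})}{u^{m+1}}{1}$ are identically zero, so the equation reduces to
\begin{equation*}
\int_\Omega \delta_t u^{m+1}\,dx = 0,
\end{equation*}
which, after multiplying by $\Delta t$, yields $\int_\Omega u^{m+1} = \int_\Omega u^m$.

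Next I would handle the auxiliary variable $w^{m+1}$. Taking $\overline{w}\equiv 1$ in \eqref{eq:esquema_DG_upw_Eyre_cahn-hilliard+adveccion_w} gives the pointwise-in-time identity $\int_\Omega w^{m+1} = \int_\Omega u^{m+1}$. Applying the same identity at the previous time step (or, for $m=0$, at initialization of $w^0$ from $u^0$ by the natural choice $w^0\defeq u^0$ or by assuming the relation as initial data) yields $\int_\Omega w^m = \int_\Omega u^m$. Chaining the three equalities,
\begin{equation*}
\int_\Omega w^{m+1} = \int_\Omega u^{m+1} = \int_\Omega u^m = \int_\Omega w^m,
\end{equation*}
closes the argument.

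There is really no main obstacle here; the only thing to verify carefully is that constants are valid test functions in both discrete spaces (immediate since $\Pd_0(\T_h)$ contains piecewise constants and $\Pc_1(\T_h)$ contains the global constant $1$) and that the construction of $a_h^{\text{upw}}$ in \eqref{Up-Bilin_gen}, which uses the decomposition $M(v)_\oplus=M^\uparrow(v)+M^\downarrow(v)$ and a cellwise volume integral plus edge jumps, truly vanishes on constants irrespective of the advecting field $\bbeta$ (continuous or discontinuous).
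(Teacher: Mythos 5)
Your proposal is correct and follows essentially the same route as the paper: test \eqref{eq:esquema_DG_upw_Eyre_cahn-hilliard+adveccion_1} with $\overline{u}=1$ (so that both the volume term and the interior-edge jumps in $a_h^{\text{upw}}$ vanish) and \eqref{eq:esquema_DG_upw_Eyre_cahn-hilliard+adveccion_w} with $\overline{w}=1$, then chain the identities across time steps. Your extra remark about the initialization of $w^0$ is a sensible clarification but not a deviation from the paper's argument.
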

\begin{proof}
Just need to take $\overline{u}=1$ in \eqref{eq:esquema_DG_upw_Eyre_cahn-hilliard+adveccion_1} and $\overline{w}=1$ in \eqref{eq:esquema_DG_upw_Eyre_cahn-hilliard+adveccion_w}.
\end{proof}

\begin{teorema}[DG \eqref{esquema_DG_upw_Eyre_cahn-hilliard+adveccion} preserves the maximum principle]
\label{thm:principio_del_maximo_DG_Cahn-Hilliard}

For any $u^m\in\Pd_0(\T_h)$ with $0\le u^m\le1$ in $\overline\Omega$, then any solution $u^{m+1}$ of \eqref{esquema_DG_upw_Eyre_cahn-hilliard+adveccion} satisfies $0\le u^{m+1}\le 1$ in $\overline\Omega$.
\end{teorema}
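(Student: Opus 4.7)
The plan is to mimic the arguments of Theorem~\ref{prop:positividad_ecuacion_conveccion_parte_pos} and Proposition~\ref{thm:positividad_DG_conveccion_incompresible}, since the bounds on $u^{m+1}$ follow from the $u$-equation~\eqref{eq:esquema_DG_upw_Eyre_cahn-hilliard+adveccion_1} alone: the coupling with $\mu^{m+1}$ and $w^{m+1}$ enters only through the (possibly discontinuous) transport field $-\nabla\mu^{m+1}$ playing the role of $\bbeta$ in the generalized upwind form~\eqref{Up-Bilin_gen}. I would run two separate tests, one for each inequality $u^{m+1}\ge 0$ and $u^{m+1}\le 1$, each localizing to an extremal cell $K^*\in\T_h$ and using a piecewise-constant test function supported there.

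For the lower bound, let $K^*$ attain $\min_{K\in\T_h}u^{m+1}_K$ and take as test function the value $(u^{m+1}_{K^*})_\ominus$ on $K^*$ and $0$ elsewhere. Since this function is piecewise constant, the volume integral in~\eqref{Up-Bilin_gen} vanishes and only the edges $e=K^*\cap L\in\Ehi$ contribute, with $\salto{\overline{u}}=(u^{m+1}_{K^*})_\ominus$. The case $u^{m+1}_{K^*}\ge 0$ is trivial, so one may assume $u^{m+1}_{K^*}<0$; by~\eqref{movilidad_partes_creciente_decreciente} this forces $M^\uparrow(u^{m+1}_{K^*})=M^\downarrow(u^{m+1}_{K^*})=0$ and each edge flux reduces to $(\media{-\nabla\mu^{m+1}}\cdot\nn_e)_\oplus M^\downarrow(u^{m+1}_L)-(\media{-\nabla\mu^{m+1}}\cdot\nn_e)_\ominus M^\uparrow(u^{m+1}_L)$, which is nonpositive because $M^\uparrow\ge 0$ and $M^\downarrow\le 0$ on $\Rset$. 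The convective term $\aupw{\vv(t_{m+1})}{u^{m+1}}{\overline{u}}$ is handled as in Proposition~\ref{thm:positividad_DG_conveccion_incompresible}: the minimality $u^{m+1}_L\ge u^{m+1}_{K^*}$ combined with $\int_{\partial K^*}\vv\cdot\nn_{K^*}=0$ (a consequence of $\nabla\cdot\vv=0$ and the slip condition~\eqref{slipcondition}) forces it to be nonpositive as well. Substitution into~\eqref{eq:esquema_DG_upw_Eyre_cahn-hilliard+adveccion_1} and expansion of the discrete derivative using $u^m_{K^*}\ge 0$ then yield the chain $0\le |K^*|\,\delta_t u^{m+1}_{K^*}(u^{m+1}_{K^*})_\ominus=-\tfrac{|K^*|}{\Delta t}\bigl((u^{m+1}_{K^*})_\ominus^2+u^m_{K^*}(u^{m+1}_{K^*})_\ominus\bigr)\le 0$, which forces $(u^{m+1}_{K^*})_\ominus=0$, i.e.\ $u^{m+1}\ge 0$ on $\overline\Omega$.

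For the upper bound I would now let $K^*$ realize $\max_{K\in\T_h}u^{m+1}_K$ and test with $(u^{m+1}_{K^*}-1)_\oplus$ on $K^*$ and $0$ elsewhere. The role previously played by the joint vanishing of $M^\uparrow,M^\downarrow$ at $u=0$ is now played by their saturation at $u=1$: when $u^{m+1}_{K^*}>1$, formula~\eqref{movilidad_partes_creciente_decreciente} gives $M^\uparrow(u^{m+1}_{K^*})=M(\tfrac{1}{2})$ and $M^\downarrow(u^{m+1}_{K^*})=-M(\tfrac{1}{2})$, while the universal bounds $M^\uparrow(u^{m+1}_L)\le M(\tfrac{1}{2})$ and $M^\downarrow(u^{m+1}_L)\ge -M(\tfrac{1}{2})$ hold for every neighbour $L$; a direct sign check makes the nonlinear edge flux nonnegative. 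The convective contribution becomes nonnegative by the same incompressibility argument, now combined with $u^{m+1}_L\le u^{m+1}_{K^*}$. One then obtains $|K^*|\,\delta_t u^{m+1}_{K^*}(u^{m+1}_{K^*}-1)_\oplus\le 0$ and splits $u^{m+1}_{K^*}-u^m_{K^*}=(u^{m+1}_{K^*}-1)+(1-u^m_{K^*})$, using $u^m_{K^*}\le 1$ to conclude $(u^{m+1}_{K^*}-1)_\oplus=0$.

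The step I expect to be the main obstacle, and which deserves the most care, is verifying that the nonlinear upwind flux across the edges of the extremal cell $K^*$ has the correct sign, given that the transport field $-\nabla\mu^{m+1}$ is a priori discontinuous and of arbitrary sign. This is precisely what the decomposition $M(u)_\oplus=M^\uparrow(u)+M^\downarrow(u)$ in~\eqref{movilidad_partes_creciente_decreciente} was engineered to achieve: by routing the nondecreasing part $M^\uparrow$ through the outflow and the nonincreasing part $M^\downarrow$ through the inflow of each edge, the saturation values at $u=0$ (both zero) and at $u=1$ (opposite in sign, equal in magnitude) align the per-edge flux with the desired sign independently of the sign of $\media{-\nabla\mu^{m+1}}\cdot\nn_e$. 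Once this monotonicity check is settled, the remainder of the argument is a direct adaptation of the pure-convection proofs already developed in Section~\ref{sec:dg-discrretization}.
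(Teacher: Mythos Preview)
Your proposal is correct and follows essentially the same route as the paper: localize to an extremal cell $K^*$, test with $(u_{K^*}^{m+1})_\ominus$ (respectively $(u_{K^*}^{m+1}-1)_\oplus$), show each of the two upwind contributions has the right sign, and conclude from the discrete time-derivative term. The only difference is in how the sign of the nonlinear flux is checked: the paper uses the monotonicity of $M^\uparrow$ and $M^\downarrow$ to replace the neighbour values by the $K^*$ values and then invokes $M(u_{K^*}^{m+1})_\oplus\,(u_{K^*}^{m+1})_\ominus=0$ (and the analogous identity at $u=1$), whereas you plug in the saturation values of $M^\uparrow,M^\downarrow$ at $u\le 0$ (both vanish) and at $u\ge 1$ ($\pm M(\tfrac12)$) together with the global bounds $M^\uparrow\ge 0$, $M^\downarrow\le 0$, $M^\uparrow\le M(\tfrac12)$, $M^\downarrow\ge -M(\tfrac12)$. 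Both arguments are equivalent reformulations of the same structural property of the decomposition~\eqref{movilidad_partes_creciente_decreciente}.
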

\begin{proof}
Firstly, we prove
that $u^{m+1}\ge 0$.  Taking the following $\Pd_0(\T_h)$ test
function
\begin{align*}
	\overline{u}=
	\begin{cases}
		(\uKs^{m+1})_{\ominus}&\text{in }K^*\\
		0&\text{out of }K^*
	\end{cases},
\end{align*}
where $K^*$ is an element of $\T_h$ such that  $\uKs^{m+1}=\min_{K\in\T_h}u_{K}^{m+1}$, equation \eqref{eq:esquema_DG_upw_Eyre_cahn-hilliard+adveccion_1} becomes
\begin{multline}\label{vf-test}
	\vert K^*\vert \delta_t \uKs^{m+1}(\uKs^{m+1})_{\ominus}=\\= - \aupw{-\nabla\mu^{m+1}}{M(u^{m+1})_{\oplus}}{\bu}-\aupw{\vv(t_{m+1})}{u^{m+1}}{\bu}.
\end{multline}

Now,  since $\uL^{m+1}\ge \uKs^{m+1}$ for all $L\in\T_h$, we can assure that
$$M^\uparrow(\uL^{m+1}) \ge M^\uparrow(\uKs^{m+1})\quad
\hbox{and} \quad
M^\downarrow(\uL^{m+1}) \le M^\downarrow(\uKs^{m+1}).
$$
Then, we can bound as follows:
\begin{align*}
	\aupw{&-\nabla\mu^{m+1}}{M(u^{m+1})_{\oplus}}{\bu}=\\
	&=\sum_{e\in\E_h^i, e=K^*\cap L}\int_e\left((\media{-\nabla\mu^{m+1}}\cdot\nn_{e})_{\oplus}(M^\uparrow(\uKs^{m+1})+M^\downarrow(\uL^{m+1}))\right.\\
	&\qquad\qquad\left.-(\media{-\nabla\mu^{m+1}}\cdot\nn_{e})_{\ominus}(M^\uparrow(\uL^{m+1})+M^\downarrow(\uKs^{m+1}))\right)(\uKs^{m+1})_{\ominus}\\
	&\le\sum_{e\in\E_h^i, e=K^*\cap L}\int_e\left((\media{-\nabla\mu^{m+1}}\cdot\nn_{e})_{\oplus}(M^\uparrow(\uKs^{m+1})+M^\downarrow(\uKs^{m+1}))\right.\\
	&\qquad\qquad\left.-(\media{-\nabla\mu^{m+1}}\cdot\nn_{e})_{\ominus}(M^\uparrow(\uKs^{m+1})+M^\downarrow(\uKs^{m+1}))\right)(\uKs^{m+1})_{\ominus}\\&=\sum_{e\in\E_h^i, e=K^*\cap L}\int_e(\media{-\nabla\mu^{m+1}}\cdot\nn_{e})\left(M(\uKs^{m+1})\right)_{\oplus}(\uKs^{m+1})_{\ominus}=0.
\end{align*}
On the other hand, applying the incompressibility of $\vv$ and proceeding as in
Section~\ref{sec:linear-conv-incompressiblity}, one has that
$$\aupw{\vv(t_{m+1})}{u^{m+1}}{\overline{u}}\le 0.$$
Therefore, from \eqref{vf-test}
$$
\vert K^*\vert \delta_t \uKs^{m+1}(\uKs^{m+1})_{\ominus}\ge0.
$$
Consequently, it is satisfied that
$$
0
\le \vert K^*\vert (\delta_t \uKs^{m+1})(\uKs^{m+1})_{\ominus}
=
-\frac{\vert K^*\vert }{\Delta t}\left((\uKs^{m+1})_{\ominus}^2+\uKs^m(\uKs^{m+1})_{\ominus}\right)
\le 0,
$$
hence, since $\uKs^m\ge 0$, we  prove that $(\uKs^{m+1})_{\ominus}=0$. Hence $u^{m+1}\ge0$.

\

Secondly, we prove  that $u^{m+1}\le 1$. Taking the following test function
\begin{align*}
	\overline{u}=
	\begin{cases}
		(\uKs^{m+1}-1)_{\oplus}&\text{in }K^*\\
		0&\text{out of }K^*
	\end{cases},
\end{align*}
where $K^*$ is an element of $\T_h$ such that  $\uKs^{m+1}=\max_{K\in\T_h}u_{K}^{m+1}$ and using similar arguments than above, we arrive at
$$
\vert K^*\vert \delta_t \uKs^{m+1}(\uKs^{m+1}-1)_{\oplus}\le0.
$$
Besides, it is satisfied that
\begin{align*}
	0&\ge
	\vert K^*\vert \delta_t \uKs^{m+1}(\uKs^{m+1}-1)_{\oplus}
	=\frac{\vert K^*\vert }{\Delta t}\left((\uKs^{m+1}-1)+(1-\uKs^m)\right)(\uKs^{m+1}-1)_{\oplus}\\
	&=
	\frac{\vert K^*\vert }{\Delta t}\left((\uKs^{m+1}-1)^2_{\oplus}+(1-\uKs^m)(\uKs^{m+1}-1)_{\oplus}\right)
	\ge0,
\end{align*}
hence we deduce that $(\uKs^{m+1}-1)_{\oplus}=0$ and, therefore, $u^{m+1}\le1$.
\end{proof}

The following result is a direct consequence of Theorem \ref{thm:principio_del_maximo_DG_Cahn-Hilliard}.
\begin{corolario}
\label{cor:principio_del_maximo_w_DG_Cahn-Hilliard}
If we use mass-lumping to compute $w^{m+1}$ in \eqref{eq:esquema_DG_upw_Eyre_cahn-hilliard+adveccion_w}, then $0\le w^{m+1}\le 1$ in $\overline\Omega$ for $m\ge0$.
\end{corolario}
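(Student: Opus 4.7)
The plan is to exploit the structure of the mass-lumped projection equation \eqref{eq:esquema_DG_upw_Eyre_cahn-hilliard+adveccion_w}, which after lumping becomes a nodal definition of $w^{m+1}$ as a convex combination of the element values of $u^{m+1}$ on the elements surrounding each vertex. Since Theorem~\ref{thm:principio_del_maximo_DG_Cahn-Hilliard} guarantees $u^{m+1}\in[0,1]$, each nodal value of $w^{m+1}$ inherits the same bounds, and then the $\Pc_1$ structure propagates the bounds to all of $\overline\Omega$.

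More concretely, let $\{\varphi_i\}_{i=1}^{N}$ denote the nodal Lagrange basis of $\Pc_1(\T_h)$, with $x_i$ the associated vertices, and write $w^{m+1}=\sum_i w_i^{m+1}\varphi_i$. The mass-lumped version of \eqref{eq:esquema_DG_upw_Eyre_cahn-hilliard+adveccion_w} replaces the consistent mass inner product on the left by its diagonal approximation, so taking $\overline{w}=\varphi_i$ yields
\begin{equation*}
w_i^{m+1}\int_\Omega \varphi_i \;=\; \int_\Omega u^{m+1}\varphi_i \;=\; \sum_{K\ni x_i} u_K^{m+1}\int_K \varphi_i .
\end{equation*}
On a simplicial mesh one has $\int_K \varphi_i = |K|/(d+1)>0$ whenever $x_i$ is a vertex of $K$, so the weights $\int_K\varphi_i /\sum_{K\ni x_i}\int_K\varphi_i$ are nonnegative and sum to one. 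Hence $w_i^{m+1}$ is a genuine convex combination of the values $\{u_K^{m+1}: K\ni x_i\}$.

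By Theorem~\ref{thm:principio_del_maximo_DG_Cahn-Hilliard}, $0\le u_K^{m+1}\le 1$ for every element $K$, and therefore $0\le w_i^{m+1}\le 1$ at every vertex $x_i$. Finally, a continuous piecewise affine function on a simplicial mesh attains its extrema at vertices of the mesh, so the pointwise bound $0\le w^{m+1}(x)\le 1$ extends from the vertices to all $x\in\overline\Omega$. The only subtle point of the argument is the need for the lumped mass matrix: without mass-lumping the consistent $\Pc_1$ mass matrix is not an $M$-matrix in general, and a convex-combination representation of the nodal values is no longer available, which is precisely why the hypothesis of mass-lumping enters the statement.
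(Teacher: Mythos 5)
Your proof is correct and is exactly the argument the paper has in mind (the paper simply declares the corollary a direct consequence of Theorem~\ref{thm:principio_del_maximo_DG_Cahn-Hilliard} without writing it out): mass-lumping turns each nodal value of $w^{m+1}$ into a convex combination of the neighbouring element values of $u^{m+1}\in[0,1]$, and the $\Pc_1$ barycentric structure propagates the bounds to all of $\overline\Omega$. Your closing remark on why the consistent mass matrix would not suffice is a correct and useful justification of the mass-lumping hypothesis.
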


\begin{teorema}
\label{thm:existencia_solucion_CH_DG-UPW}
There is at least one solution of the scheme
\eqref{esquema_DG_upw_Eyre_cahn-hilliard+adveccion}.
\end{teorema}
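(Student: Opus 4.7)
As in the proof of Theorem~\ref{thm:existencia_sol_esquema_ecuacion_conveccion_parte_pos}, the idea is to apply the Leray--Schauder Theorem~\ref{thm:Leray-Schauder} on the finite-dimensional space $\Pd_0(\T_h)$. First, the auxiliary unknowns can be eliminated: equation~\eqref{eq:esquema_DG_upw_Eyre_cahn-hilliard+adveccion_w} forces $w^{m+1}$ to be the $L^2$-projection of $u^{m+1}$ onto $\Pc_1(\T_h)$, and then~\eqref{eq:esquema_DG_upw_Eyre_cahn-hilliard+adveccion_2} determines $\mu^{m+1}\in\Pc_1(\T_h)$ via inversion of the (invertible) mass matrix. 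This yields a continuous linear map $\Psi\colon\Pd_0(\T_h)\to\Pc_1(\T_h)$ with $\mu^{m+1}=\Psi(u^{m+1})$. Given $\widehat u\in\Pd_0(\T_h)$, I define $T(\widehat u)=u$ as the unique $u\in\Pd_0(\T_h)$ satisfying
\begin{equation*}
\int_\Omega \frac{u-u^m}{\Delta t}\,\bu+\aupw{-\nabla\Psi(\widehat u)}{M(\widehat u)_{\oplus}}{\bu}+\aupw{\vv(t_{m+1})}{u}{\bu}=0,\qquad\forall\,\bu\in\Pd_0(\T_h),
\end{equation*}
which is the linear upwind scheme~\eqref{esquema:ecuacion_conveccion} with an additional fixed source, hence uniquely solvable. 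The map $T$ is continuous because $M(\widehat u)_{\oplus}$, $M^{\uparrow}(\widehat u)$, $M^{\downarrow}(\widehat u)$ and $\Psi(\widehat u)$ depend continuously on $\widehat u$, and compact as the ambient space is finite-dimensional; a fixed point of $T$ together with the associated $\Psi(u)$ and $L^2$-projection gives a solution of~\eqref{esquema_DG_upw_Eyre_cahn-hilliard+adveccion}.

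The decisive step is to prove that $B=\{u\in\Pd_0(\T_h):u=\alpha\,T(u)\text{ for some }\alpha\in[0,1]\}$ is bounded. The case $\alpha=0$ is trivial, and for $\alpha\in(0,1]$ the identity $u=\alpha\,T(u)$, after multiplying the equation defining $T(u)=u/\alpha$ by $\alpha$, reads
\begin{equation*}
\int_\Omega \frac{u-\alpha u^m}{\Delta t}\,\bu+\alpha\,\aupw{-\nabla\Psi(u)}{M(u)_{\oplus}}{\bu}+\aupw{\vv(t_{m+1})}{u}{\bu}=0,
\end{equation*}
which differs from the actual scheme only in that $u^m$ is replaced by $\alpha u^m$ and the mobility-flux term is scaled by $\alpha$. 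I would then repeat verbatim the argument of Theorem~\ref{thm:principio_del_maximo_DG_Cahn-Hilliard}: test with the element-wise lift of $(\uKs)_{\ominus}$ at a minimum element to conclude $u\ge 0$, and with the lift of $(\uKs-1)_{\oplus}$ at a maximum element to conclude $u\le 1$. Both upwind bilinear forms retain their sign because the mobility is evaluated at the same $u$ against which we test and $\vv$ is incompressible, while the replacement $u^m\mapsto\alpha u^m\in[0,1]$ preserves the key inequalities $-((\uKs)_{\ominus})^2-\alpha u^m_{K^*}(\uKs)_{\ominus}\le 0$ and $((\uKs-1)_{\oplus})^2+(1-\alpha u^m_{K^*})(\uKs-1)_{\oplus}\ge 0$.

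The main obstacle is exactly this uniform a priori bound along the homotopy: one must check that the factor $\alpha$ does not spoil the sign structure exploited in Theorem~\ref{thm:principio_del_maximo_DG_Cahn-Hilliard}. Once $B\subset[0,1]$ is established, boundedness of $B$ follows from the equivalence of norms in the finite-dimensional space $\Pd_0(\T_h)$, and Theorem~\ref{thm:Leray-Schauder} supplies a fixed point of $T$, concluding the existence proof.
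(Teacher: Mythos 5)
Your proposal is correct and follows essentially the same route as the paper: a Leray--Schauder fixed-point argument in finite dimensions whose decisive a priori bound for the homotopy set $B$ is obtained by rerunning the maximum-principle argument of Theorem~\ref{thm:principio_del_maximo_DG_Cahn-Hilliard} with $u^m$ replaced by $\alpha u^m$, exactly as the paper does. The only (harmless) difference is that you eliminate $\mu^{m+1}$ and $w^{m+1}$ through the affine solve $\Psi$ and work with a fixed-point map on $\Pd_0(\T_h)$ alone, which spares you the paper's additional step of bounding $\normaL{w}$ and $\normaL{\mu}$ inside $B$.
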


\begin{proof}
Given a function $z\in\Pd_0(\T_h)$ with $0\le z \le 1$, we define the map
$$T\colon \Pd_0(\T_h)\times \Pc_1(\T_h)\times\Pc_1(\T_h)\longrightarrow\Pd_0(\T_h)\times \Pc_1(\T_h)\times\Pc_1(\T_h)$$ such that $T(\widehat{u},\widehat{\mu},\widehat{w})=(u,\mu,w)\in\Pd_0(\T_h)\times\Pc_1(\T_h)\times\Pc_1(\T_h)$ is the unique solution, for every $\overline{u}\in\Pd_0(\T_h)$,  $\overline{\mu},\overline{w}\in\Pc_1(\T_h)$, of the linear (and decoupled) scheme:
\begin{subequations}
	\begin{align}
		\label{eq:esquema_lineal_Leray-Schauder_DG_upw_Eyre_cahn-hilliard+adveccion_1}
		\frac{1}{\Delta t}\escalarL{u-z}{\overline{u}}& +\aupw{\vv}{u}{\overline{u}}
		=-\aupw{-\nabla\widehat{\mu}}{M(\widehat{u})_{\oplus}}{\overline{u}},\\
		\label{eq:esquema_lineal_Leray-Schauder_DG_upw_Eyre_cahn-hilliard+adveccion_2}
		\escalarL{\mu}{\overline{\mu}}&=\varepsilon^2 \escalarL{\nabla w}{\nabla\overline{\mu}}+ \escalarL{f(u,z)}{\overline{\mu}},\\
		\label{eq:esquema_lineal_Leray-Schauder_DG_upw_Eyre_cahn-hilliard+adveccion_w}
		\escalarL{w}{\overline{w}}&
		=\escalarL{u}{\overline{w}},
	\end{align}
\end{subequations}

To check that $T$ is well defined, one may use the following steps. First, it is easy to prove that there is a unique solution $u$ of \eqref{eq:esquema_lineal_Leray-Schauder_DG_upw_Eyre_cahn-hilliard+adveccion_1} which implies that there is a unique solution $w$ of \eqref{eq:esquema_lineal_Leray-Schauder_DG_upw_Eyre_cahn-hilliard+adveccion_w} using, for instance, the Lax-Milgram theorem. Then, it is straightforward to see that the solution $\mu$ of \eqref{eq:esquema_lineal_Leray-Schauder_DG_upw_Eyre_cahn-hilliard+adveccion_2} is unique, which implies its existence as $\Pc_1(\T_h)$ is a finite-dimensional space.

It can be proved, using the notion of convergence elementwise, as it was done in Theorem \ref{thm:existencia_sol_esquema_ecuacion_conveccion_parte_pos} and taking into consideration that $\nabla\widehat\mu\in\left(\Pd_0(\T_h)\right)^d$, that the operator $T$ is continuous, and, therefore, it is compact since $\Pd_0(\T_h)$ and $\Pc_1(\T_h)$ have finite dimension.

Finally, let us prove that the set
\begin{multline*}
	B=\{(u,\mu,w)\in\Pd_0(\T_h)\times\Pc_1(\T_h)\times\Pc_1(\T_h)\colon\\ (u,\mu,w)=\alpha T(u,\mu,w)\text{ for some } 0\le\alpha\le1\}
\end{multline*}
is bounded (independent of $\alpha$). The case $\alpha=0$ is trivial so we will assume that $\alpha\in(0,1]$.

If $(u,\mu,w)\in B$, then $u\in\Pd_0(\T_h)$ is the solution, for every $\overline{u}\in \Pd_0(\T_h)$, of
\begin{align}
	\label{eq:esquema_DG_upw_Eyre_cahn-hilliard+adveccion_Leray-Schauder_1}
	\frac{1}{\Delta t}\escalarL{u-\alpha z}{\overline{u}}&
	+ \aupw{\vv}{u}{\overline{u}}
	=-\alpha\,\aupw{-\nabla\mu}{M({u})_{\oplus}}{\overline{u}}.
\end{align}
Now, testing \eqref{eq:esquema_DG_upw_Eyre_cahn-hilliard+adveccion_Leray-Schauder_1} by $\overline u=1$, we get that $$\int_\Omega u=\alpha \int_\Omega z,$$ and, since $0\le z\le 1$, and since it can be proved that $0\le u\le 1$ using the same arguments than in Theorem \ref{thm:principio_del_maximo_DG_Cahn-Hilliard}, we get that $$\norma{u}_{L^1(\Omega)}\le \norma{z}_{L^1(\Omega)}.$$

Moreover, $w\in\Pc_1(\T_h)$ is the solution of the equation
\begin{align}
	\escalarL{w}{\overline{w}}&
	=\escalarL{u}{\overline{w}}, &\forall\overline{w}\in\Pc_1(\T_h).
\end{align}
Testing with $\overline{w}=w$ and using that $u\le1$ and that the norms are equivalent in $\Pc_1(\T_h)$, we obtain
$$\normaL{w}^2
=\escalarL{u}{w}\le\norma{w}_{L^1(\Omega)}\le \vert \Omega\vert ^{1/2} \normaL{w},$$
hence $\normaL{w}\le \vert \Omega\vert ^{1/2}$ holds.

Finally, we will check that $\mu$ is bounded.
Regarding that, $\mu\in\Pc_1(\T_h)$ is the solution of
\begin{align}
	\escalarL{\mu}{\overline{\mu}}&=\varepsilon^2 \escalarL{\nabla w}{\nabla \overline{\mu}}+ \escalarL{f(u,z)}{\overline{\mu}},&\forall\overline{\mu}\in \Pc_1(\T_h),
\end{align}
by testing by $\overline{\mu}=\mu$ we get that
\begin{align*}
	\normaL{\mu}^2&\le \varepsilon^2\normaLd{\nabla w}\normaLd{\nabla \mu}+\normaL{f(u,z)}\normaL{\mu}\\
	&\le \varepsilon^2\norma{w}_{H^1(\Omega)}\norma{\mu}_{H^1(\Omega)}+\normaL{f(u,z)}\normaL{\mu}.
\end{align*}
The norms are equivalent in the finite-dimensional space $\Pc_1(\T_h)$, therefore, there is $C_{\text{cont}}\ge 0$ such that
$$\normaL{\mu}\le \varepsilon^2C_{\text{cont}}\normaL{w}+\normaL{f(u,z)}.$$
Hence, as $0\le u,z\le 1$, we know that $\normaL{f(u,z)}$ is bounded, and therefore $\normaL{\mu}$ is bounded.

Since $\Pd_0(\T_h)$ and $\Pc_1(\T_h)$ are finite-dimensional spaces where all the norms are equivalent, we have proved that $B$ is bounded.

Thus, using the Leray-Schauder fixed point theorem \ref{thm:Leray-Schauder}, there is a solution $(u,\mu,w)$ of the scheme
\eqref{esquema_DG_upw_Eyre_cahn-hilliard+adveccion}.
\end{proof}

\begin{corolario}
There is at least one solution of the following ({non-truncated}) scheme:

Find $u^{m+1}\in \Pd_0(\T_h)$ with $0\le u^{m+1}\le1$ and $\mu^{m+1},w^{m+1}\in \Pc_1(\T_h)$ with $0\le w^{m+1}\le 1$, solving
\begin{subequations}
	\label{esquema_DG_upw_Eyre_cahn-hilliard+adveccion_no_parte_pos}
	\begin{align}
		\label{eq:esquema_DG_upw_Eyre_cahn-hilliard+adveccion_no_parte_pos_1}
		\escalarL{\delta_tu^{m+1}}{\overline{u}}&\nonumber\\&\hspace*{-5em}+\aupw{-\nabla\mu^{m+1}}{M(u^{m+1})}{\overline{u}} +\aupw{\vv(t_{m+1})}{u^{m+1}}{\overline{u}}=0,\\
		\label{eq:esquema_DG_upw_Eyre_cahn-hilliard+adveccion_no_parte_pos_2}
		\escalarL{\mu^{m+1}}{\overline{\mu}}&=\varepsilon^2 \escalarL{\nabla w^{m+1}}{\nabla\overline{\mu}}+ \escalarL{f(u^{m+1},u^m)}{\overline{\mu}},\\
		\label{eq:esquema_DG_upw_Eyre_cahn-hilliard+adveccion_no_parte_pos_w}
		\escalarL{w^{m+1}}{\overline{w}}&=
		\escalarL{u^{m+1}}{\overline{w}}.
	\end{align}
\end{subequations}
for all $\overline u \in \Pd_0(\T_h)$ and $\overline{\mu}, \overline{w}\in \Pc_1(\T_h)$. Here,  we have considered $M(u^{m+1})$ instead of $M(u^{m+1})_{\oplus}$.
\end{corolario}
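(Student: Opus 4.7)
The strategy is to reduce existence for the non-truncated scheme \eqref{esquema_DG_upw_Eyre_cahn-hilliard+adveccion_no_parte_pos} to existence for the truncated scheme \eqref{esquema_DG_upw_Eyre_cahn-hilliard+adveccion}, exploiting the fact that the maximum principle already proven makes the truncation $(\cdot)_{\oplus}$ on the mobility inactive at any solution.

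First, I would invoke Theorem \ref{thm:existencia_solucion_CH_DG-UPW} to obtain a triple $(u^{m+1},\mu^{m+1},w^{m+1})\in\Pd_0(\T_h)\times\Pc_1(\T_h)\times\Pc_1(\T_h)$ solving the truncated scheme \eqref{esquema_DG_upw_Eyre_cahn-hilliard+adveccion}. Next, I would apply Theorem \ref{thm:principio_del_maximo_DG_Cahn-Hilliard} to this solution, using the hypothesis $0\le u^m\le 1$ (obtained inductively from $0\le u^0=u_0\le 1$), to conclude $0\le u^{m+1}\le 1$ in $\overline\Omega$.

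The key observation is then that, because $M(v)=v(1-v)\ge 0$ for $v\in[0,1]$, the bound $0\le u^{m+1}\le 1$ implies $M(u^{m+1})\ge 0$ on every element, and hence
\[
M(u^{m+1})_{\oplus}=M(u^{m+1}) \quad\text{in }\overline\Omega.
\]
Consequently the bilinear form $\aupw{-\nabla\mu^{m+1}}{M(u^{m+1})_{\oplus}}{\overline{u}}$ appearing in \eqref{eq:esquema_DG_upw_Eyre_cahn-hilliard+adveccion_1} coincides with $\aupw{-\nabla\mu^{m+1}}{M(u^{m+1})}{\overline{u}}$ of \eqref{eq:esquema_DG_upw_Eyre_cahn-hilliard+adveccion_no_parte_pos_1}, so the same triple automatically solves \eqref{esquema_DG_upw_Eyre_cahn-hilliard+adveccion_no_parte_pos}. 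Finally, the bound $0\le w^{m+1}\le 1$ claimed in the statement follows from Corollary \ref{cor:principio_del_maximo_w_DG_Cahn-Hilliard} under the standing assumption that mass-lumping is employed in \eqref{eq:esquema_DG_upw_Eyre_cahn-hilliard+adveccion_no_parte_pos_w}.

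There is really no main obstacle here: the work has already been done in Theorems \ref{thm:principio_del_maximo_DG_Cahn-Hilliard} and \ref{thm:existencia_solucion_CH_DG-UPW}, and the corollary is simply the observation that the truncation $M(\cdot)_{\oplus}$ is a redundant safeguard once the discrete maximum principle is available. The only subtle point worth flagging is that the reduction is one-way: any solution of the truncated scheme solves the non-truncated one, but the converse need not hold, so uniqueness is not claimed. The argument therefore yields existence, together with the announced pointwise bounds on $u^{m+1}$ and $w^{m+1}$.
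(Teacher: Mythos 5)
Your proposal is correct and follows exactly the paper's own argument: take the solution of the truncated scheme from Theorem \ref{thm:existencia_solucion_CH_DG-UPW}, apply the maximum principle of Theorem \ref{thm:principio_del_maximo_DG_Cahn-Hilliard} to conclude $0\le u^{m+1}\le 1$, and observe that then $M(u^{m+1})_{\oplus}=M(u^{m+1})$, so the same triple solves the non-truncated scheme. Your additional remarks on the one-way nature of the reduction and on the mass-lumping hypothesis behind the bound on $w^{m+1}$ are accurate and slightly more careful than the paper's own write-up.
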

\begin{proof}
By Theorems \ref{thm:principio_del_maximo_DG_Cahn-Hilliard} and \ref{thm:existencia_solucion_CH_DG-UPW} we know that there is a solution of the scheme
\eqref{esquema_DG_upw_Eyre_cahn-hilliard+adveccion}
such that
$0\le u^{m}\le 1$ in $\overline\Omega$ for every $m\ge0$. Hence, $M(u^m)=M(u^m)_{\oplus}$ for every $m\ge0$, and therefore the solution of
\eqref{esquema_DG_upw_Eyre_cahn-hilliard+adveccion}
is also a solution of
\eqref{esquema_DG_upw_Eyre_cahn-hilliard+adveccion_no_parte_pos},
which moreover satisfies the discrete maximum principle.
\end{proof}

\section{Numerical experiments}
\label{sec:numer-experiments}

We now present several numerical tests in which we explore the behaviour of the new upwind DG scheme presented in this work (\textbf{DG-UPW})~(\ref{esquema_DG_upw_Eyre_cahn-hilliard+adveccion}) and compare it with two other space semidiscretizations found in the literature: {firstly a classical FEM discretization (\textbf{FEM}) and secondly the DG scheme proposed in~\cite{Riviere_DG_advect_CH_2018}, based on an SIP + (sigmoid upwind) technique, that we call (\textbf{DG-SIP)}}. We use $P_1$ piecewise polynomials {for both schemes} unless otherwise specified.

{For} the DG-UPW scheme, we use mass lumping to compute $w^{m+1}$ in \eqref{eq:esquema_DG_upw_Eyre_cahn-hilliard+adveccion_w} so that $w^{m+1}\in[0,1]$ by the Corollary \ref{cor:principio_del_maximo_w_DG_Cahn-Hilliard}. This  $w^{m+1}$, which is
a regularization of the primal variable $u^{m+1}$, is considered as the main phase-field
variable, which is used when showing the results of the numerical experiments.
Moreover, we consider $\Pe=1$ unless another value is specified.

\begin{figure}[t]
	\begin{minipage}{0.5\linewidth}
		\centering
		\textbf{No convection}
		\includegraphics[scale=0.5]{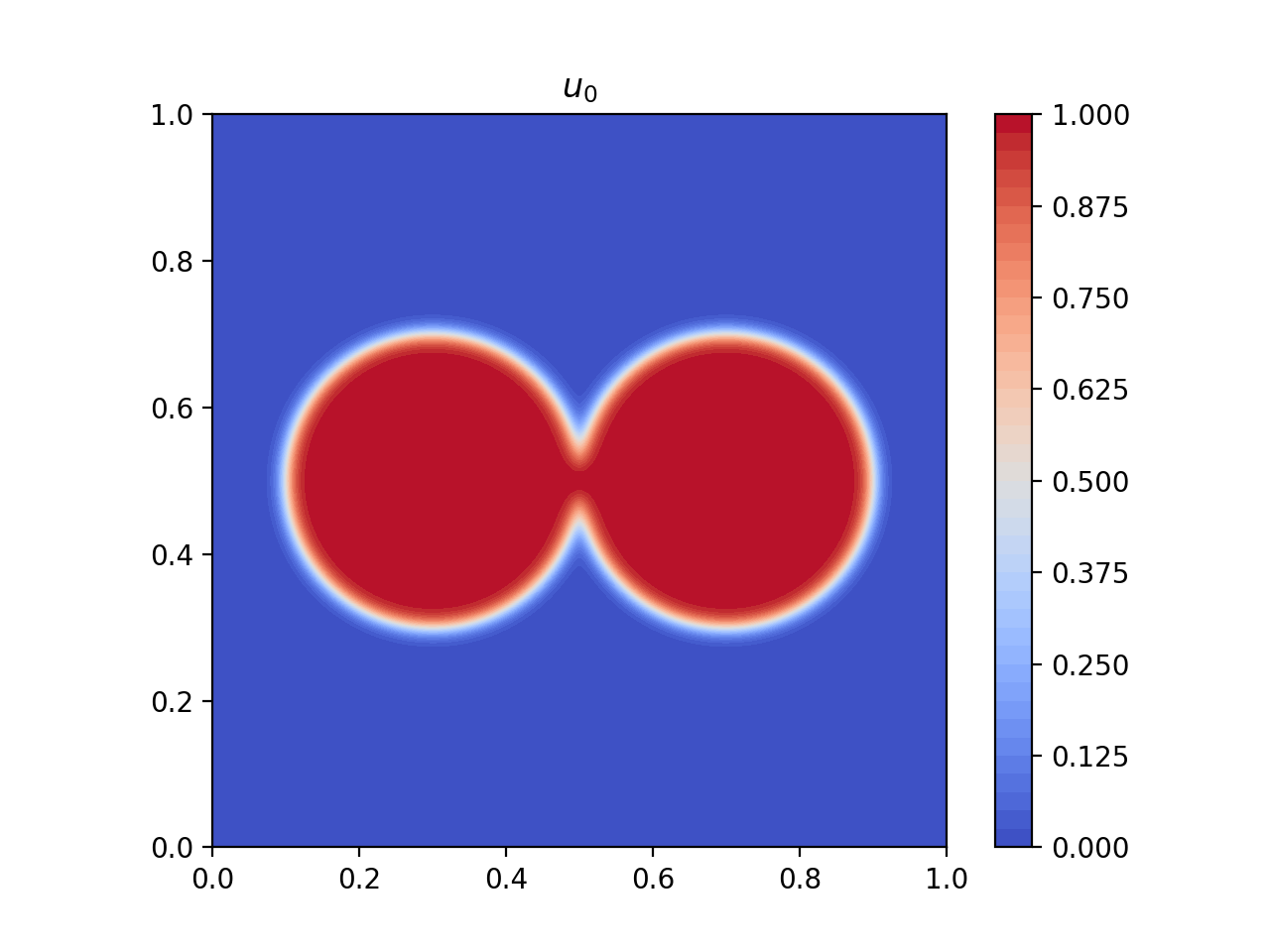}
	\end{minipage}
	\begin{minipage}{0.5\textwidth}
		\centering
		\textbf{Convection}
		\includegraphics[scale=0.5]{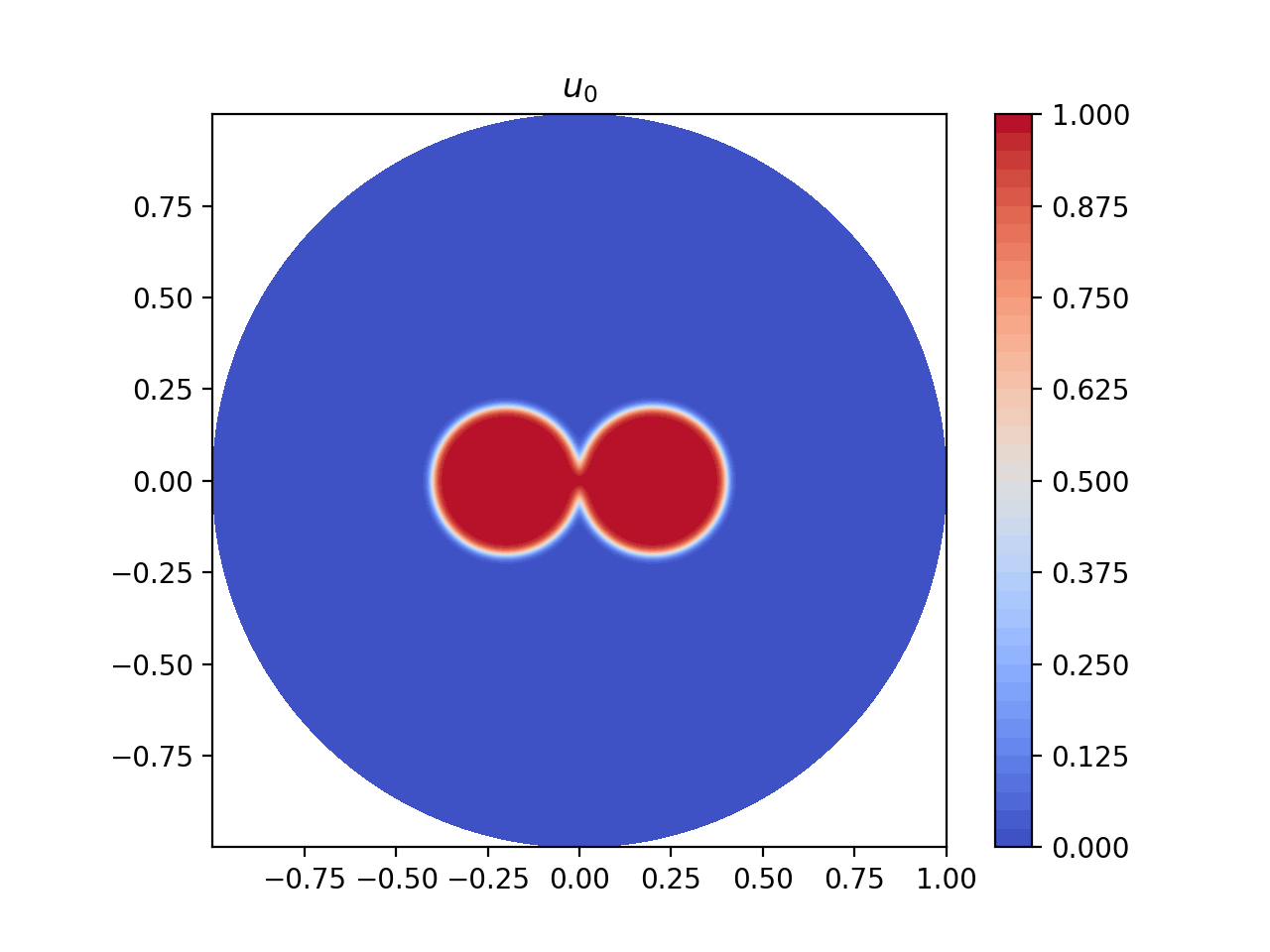}
	\end{minipage}
	\caption{Left: initial condition in the case $\vv=0$. Right: initial condition in the case $\vv=100(y,-x)$.}
	\label{fig:condicion_inicial_agregacion-analisis_errores}
\end{figure}

\begin{figure}[t]
	\centering
	\begin{minipage}{0.32\linewidth}
		\centering
		\textbf{FEM}
		\includegraphics[scale=0.095]{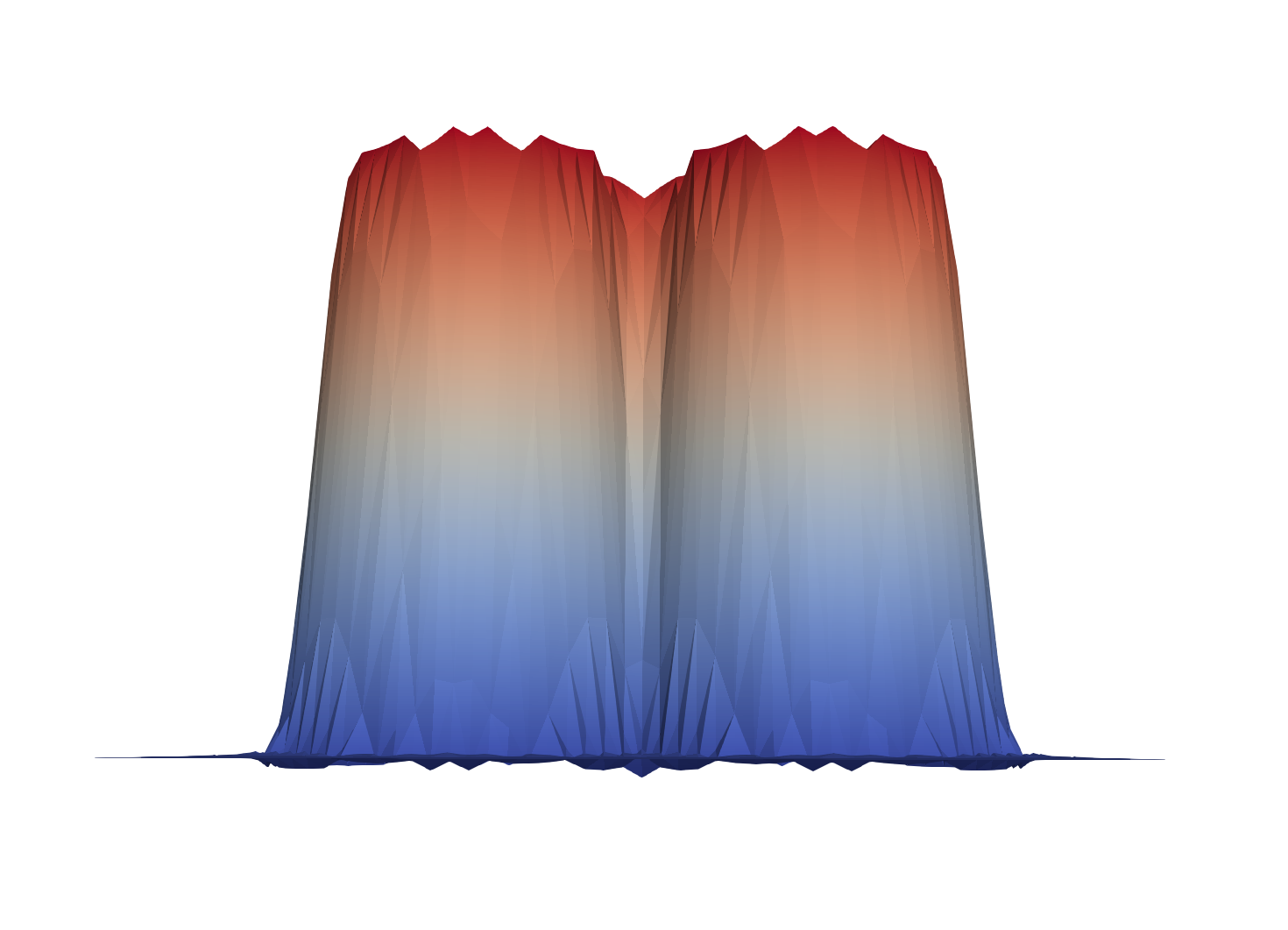}
	\end{minipage}
	\begin{minipage}{0.32\linewidth}
		\centering
		\textbf{DG-SIP}
		\includegraphics[scale=0.095]{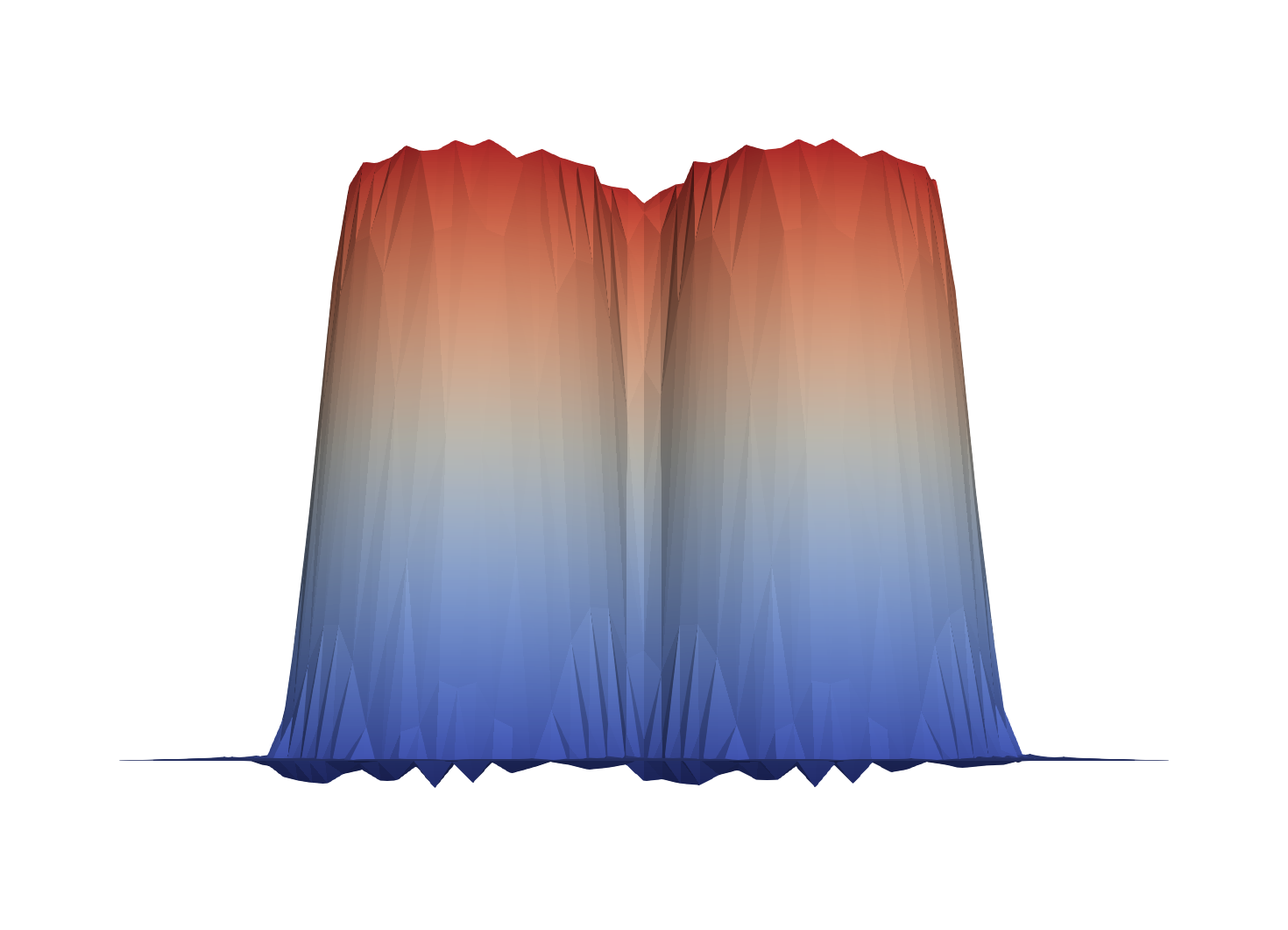}
	\end{minipage}
	\begin{minipage}{0.32\linewidth}
		\centering
		\textbf{DG-UPW}
		\includegraphics[scale=0.095]{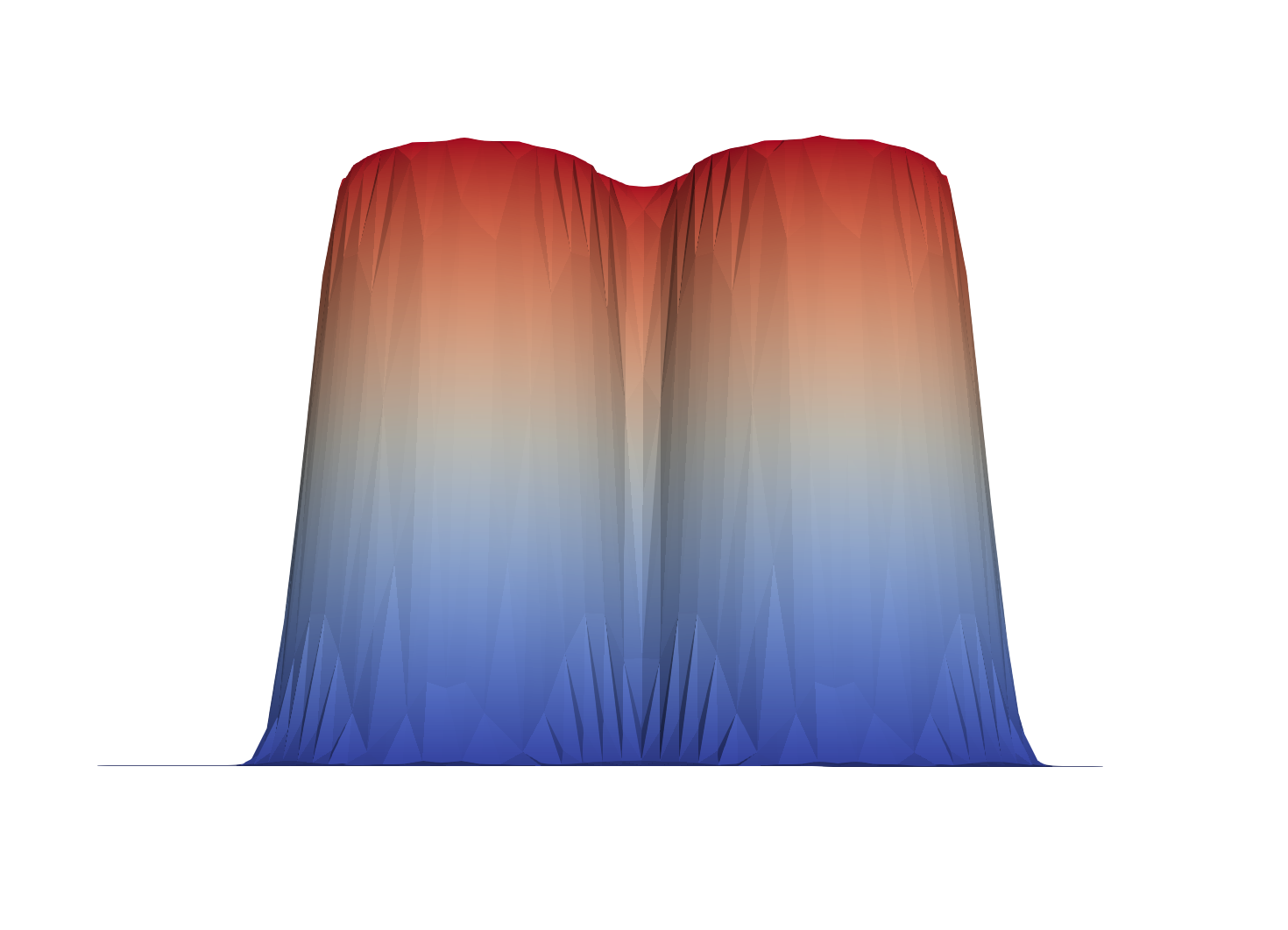}
	\end{minipage}
	\caption{Aggregation of circular regions {at} $T=0.001$ without convection, 3D view (height represents the value of the phase variable on each point of the squared domain)}
	\label{fig:comparacion_esquemas_no-conveccion}
\end{figure}

\subsection{Qualitative tests and comparisons}

Our first numerical tests are devoted to qualitative experiments about our DG-UPW scheme in rectangular and circular domains with different kinds of velocity fields. We also inspect the discrete energy and the maximum principle property, confirming that
the latter one holds for our scheme but not for the two aforementioned ones, FEM and DG-SIP.

\subsubsection{Agreggation of circular regions without convection}
\label{sec:aggregation_circular_regions}

First, we consider the Cahn–Hilliard equation without convection ($\vv=0$) in the squared domain $\Omega=(0,1)^2$ and the following initial condition (two small circles of radius $0.2$, see Figure \ref{fig:condicion_inicial_agregacion-analisis_errores}, left) :
\begin{multline}
u_0=\frac{1}{2}\left(\frac{\tanh{\left(0.2-\sqrt{(x-x_1)^2+(y-y_1)^2}\right)}}{\sqrt{2}\varepsilon}+1\right) \\+ \frac{1}{2}\left(\frac{\tanh{\left(0.2-\sqrt{(x-x_2)^2+(y-y_2)^2}\right)}}{\sqrt{2}\varepsilon}+1\right),
\label{initial_condition_1}
\end{multline}
with centers $(x_1,y_1)=(0.3,0.5)$ and $(x_2,y_2)=(0.7,0.5)$.
We take a structured mesh with $h\approx 2.8284\cdot 10^{-2}$ and run time iterations with $\Delta t=10^{-6}$.
Each iteration consists of solving a nonlinear system for computing $(u^{n+1},\mu^{n+1},w^{n+1})$, for which we {use}
Newton's method iterations, programmed on the FEniCS finite element library~\cite{FEniCS:LoggMardalEtAl2012a,Fenics:AlnaesEtAl:2015}.
For linear systems we used a MPI parallel solver (GMRES) in the computing cluster of the Universidad de C\'adiz.

\begin{figure}[t]
	\centering
	\begin{minipage}{0.49\linewidth}
		\centering
		\textbf{Maximum-Minimum}
		\includegraphics[scale=0.51]{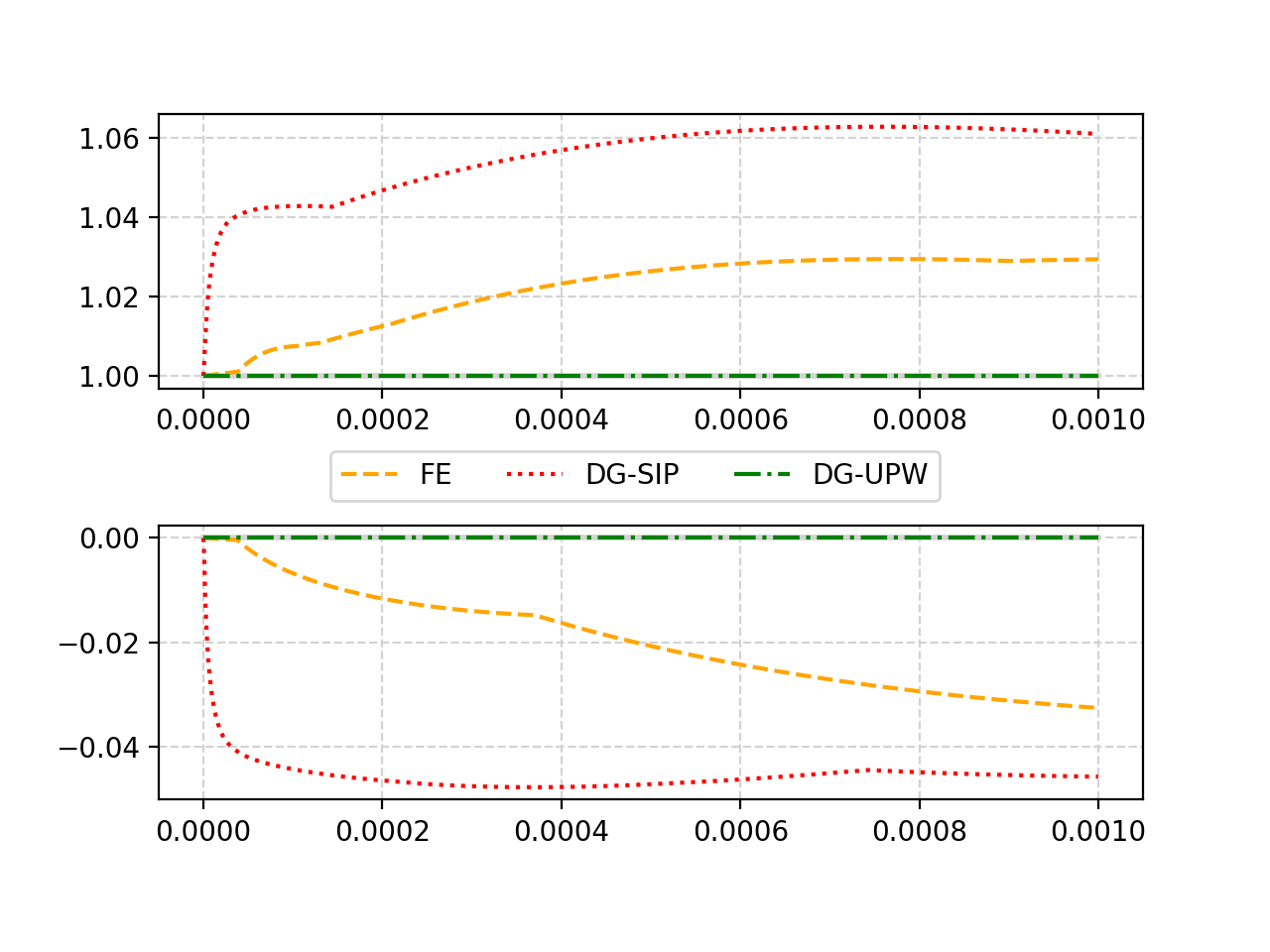}
	\end{minipage}
	\begin{minipage}{0.49\linewidth}
		\centering
		\textbf{Energy}
		\includegraphics[scale=0.5]{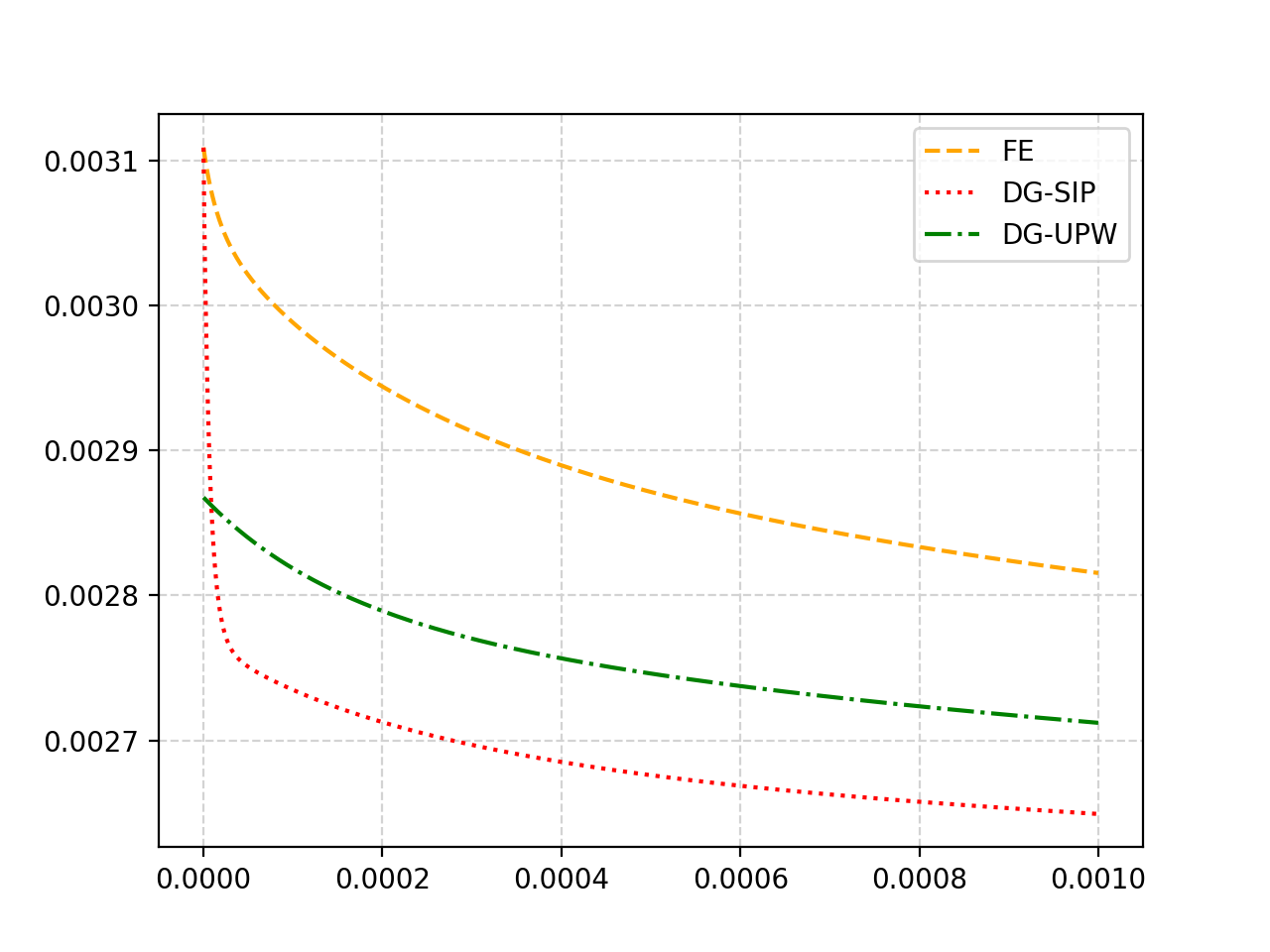}
	\end{minipage}
	\caption{Aggregation of circular phases. On the left, maximum (top) and minimum (bottom) of the phase field variable over time without convection ($\vv=0$). On the right, energy over time.}
	\label{fig:comparacion_esquemas_no-conveccion_max}
\end{figure}

In Figure~\ref{fig:comparacion_esquemas_no-conveccion} we show a 3D view of the phase field function at the time step $T=0.001$, when the aggregation process has started.
It is interesting to notice that for our upwind DG scheme \eqref{esquema_DG_upw_Eyre_cahn-hilliard+adveccion} there are no spurious oscillations  meanwhile for the FEM and DG-SIP schemes we obtain several numerical issues (vertical fluctuations in the 3D graphics).

Moreover, in the Figure \ref{fig:comparacion_esquemas_no-conveccion_max} we can clearly observe how the maximum principle is preserved by DG-UPW scheme (and not by the two other ones). Regarding the energy, we obtain a non-increasing behaviour as expected from the continuous model. An analytical proof of this property in the discrete case is left as future work.

\subsubsection{Agreggation of circular regions with convection}

Second, we define $\Omega$ as the unit ball in $\mathbb R^2$ and, again, the initial condition~(\ref{initial_condition_1}) (two small circles of radius 0.2), with centers $(x_1,y_1)=(-0.2,0)$ and $(x_2,y_2)=(0.2,0)$,
see Figure \ref{fig:condicion_inicial_agregacion-analisis_errores} (right). Moreover, {for testing the effect of convection in our scheme,} we take $\varepsilon=0.001$ and $\vv=100(y,-x)$, so that $\vv\cdot\nn=0$ on $\partial\Omega$. We take an unstructured mesh with $h\approx 4\cdot 10^{-2}$ and run time iterations with $\Delta t=10^{-3}$. Figure \ref{fig:comparacion_esquemas_conveccion} shows the values of the phase field function at different time steps. We can observe that, despite of our election of a highly significant convection term,
the results of scheme DG-UPW are qualitatively correct. Nevertheless, we can observe how the spurious oscillations become more important and the solution begins to have an unexpected behaviour when using both the FEM and DG-SIP schemes.

Concerning the maximum principle, in Figure \ref{fig:comparacion_esquemas_conveccion_max} we can see how this property is preserved  by the DG-UPW scheme~\eqref{esquema_DG_upw_Eyre_cahn-hilliard+adveccion}, while the phase field variable reaches nonphysical values, very far from $[0,1]$, when using the other aforementioned schemes. Moreover, it is interesting to observe the approximation of the steady state of the schemes, represented by the quantity $\frac{\normaLinf{u^{m+1}-u^m}}{\normaLinf{u^m}}$, which tends to $0$ in the case DG-UPW. This fact indicates that the solution converges to a stationary state, while for the other schemes it remains in an oscillatory state.

The computational time spent to obtain the results (computed sequentially) with each of these schemes is 2:32min using DG-UPW, 1:10min using FE and 3:24min using DG-SIP.

For the fairness of comparisons, we redo the tests using both FE and DG-SIP reducing the step size of the mesh, on the one hand, and using higher order polynomials, on the other hand. First, if we reduce the mesh size to $h/2\approx 2\cdot 10^{-2}$ and we use $P_1$ polynomials, the FEM scheme does not converge (the linear solver, GMRES, fails to converge) while the DG-SIP scheme gives us the results shown in Figure \ref{fig:comparacion_esquemas_conveccion_refinado} (left), requiring 35:20min to complete the computations sequentially. Second, if we keep the mesh size $h\approx 4\cdot 10^{-2}$ and we use $P_2$ polynomials, the FEM scheme does not converge either (Newton's method does not converge) while the DG-SIP scheme gives us the results shown in Figure \ref{fig:comparacion_esquemas_conveccion_refinado} (right), requiring 11min to complete the computations sequentially. Therefore, the FEM scheme does not even converge if we try to improve the results above and, while the DG-SIP scheme does converge, the results still show spurious oscillations and require a much longer computational time to be completed than those shown in Figure \ref{fig:comparacion_esquemas_conveccion} (right) using our DG-UPW scheme.

\begin{figure}[H]
	\begin{tabular}{l r}
		\rotatebox[origin=c]{90}{t=0.025} &
		\hspace*{-0.5cm}
		\begin{minipage}{0.32\textwidth}
			\centering
			\textbf{FEM}
			\includegraphics[scale=0.35]{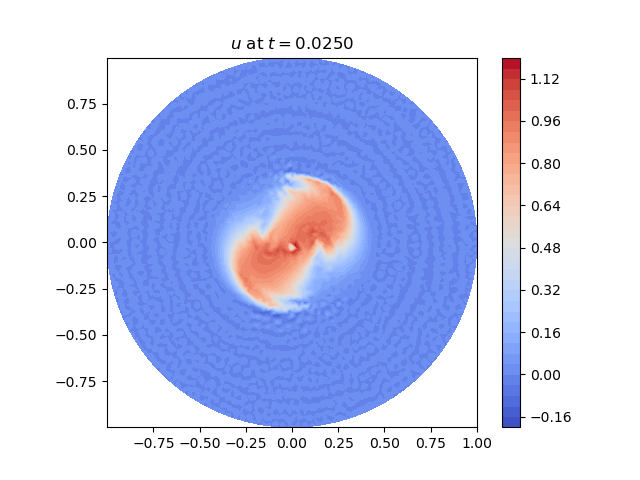}
		\end{minipage}
		\begin{minipage}{0.32\textwidth}
			\centering
			\textbf{DG-SIP}
			\includegraphics[scale=0.35]{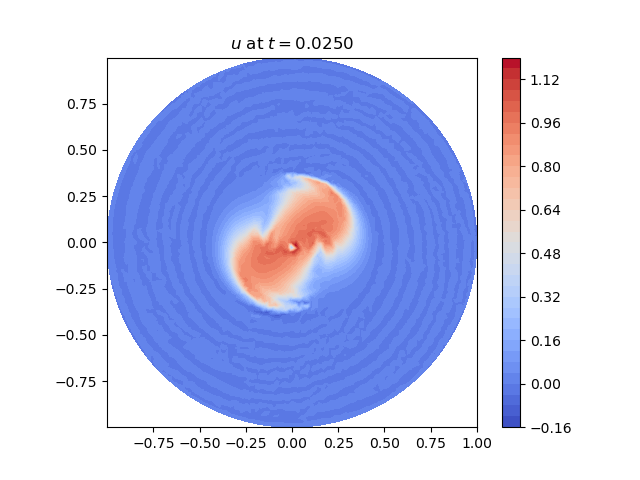}
		\end{minipage}
		\begin{minipage}{0.32\textwidth}
			\centering
			\textbf{DG-UPW}
			\includegraphics[scale=0.35]{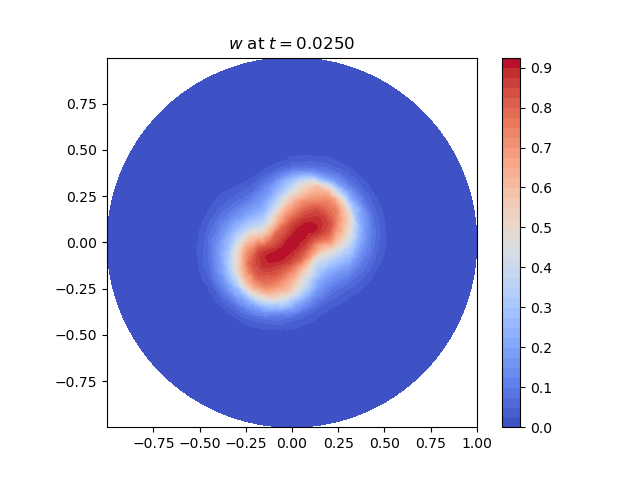}
		\end{minipage}
	\end{tabular}
	\begin{tabular}{l r}
		\rotatebox[origin=c]{90}{t=0.050} &
		\hspace*{-0.5cm}
		\begin{minipage}{0.32\textwidth}
			\centering
			\includegraphics[scale=0.35]{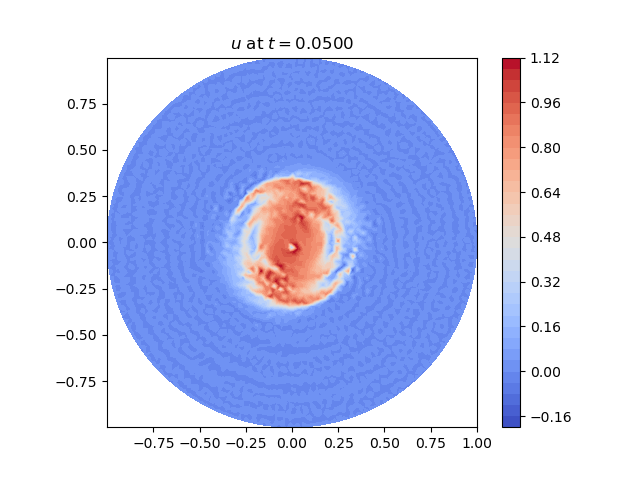}
		\end{minipage}
		\begin{minipage}{0.32\textwidth}
			\centering
			\includegraphics[scale=0.35]{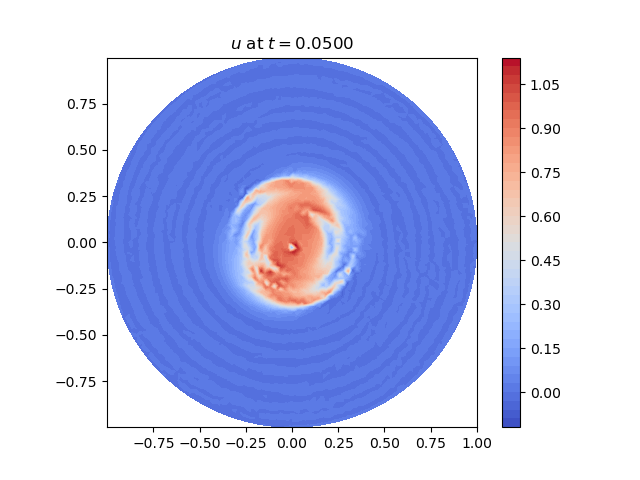}
		\end{minipage}
		\begin{minipage}{0.32\textwidth}
			\centering
			\includegraphics[scale=0.35]{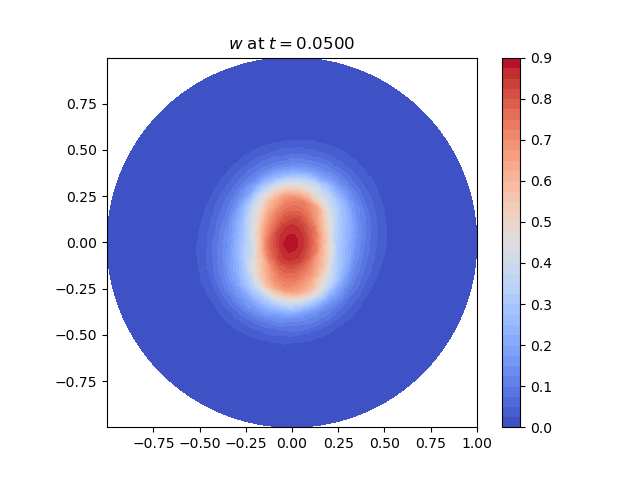}
		\end{minipage}
	\end{tabular}
	\begin{tabular}{l r}
		\rotatebox[origin=c]{90}{t=0.075} &
		\hspace*{-0.5cm}
		\begin{minipage}{0.32\textwidth}
			\centering
			\includegraphics[scale=0.35]{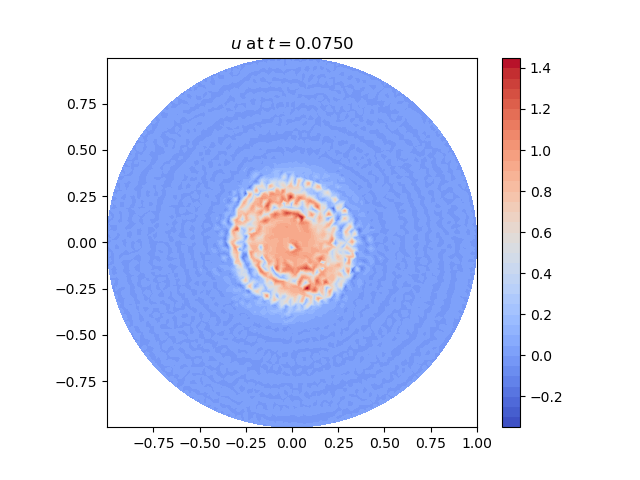}
		\end{minipage}
		\begin{minipage}{0.32\textwidth}
			\centering
			\includegraphics[scale=0.35]{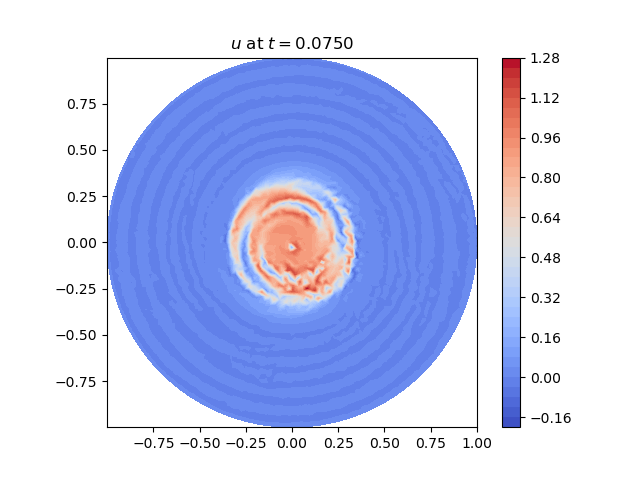}
		\end{minipage}
		\begin{minipage}{0.32\textwidth}
			\centering
			\includegraphics[scale=0.35]{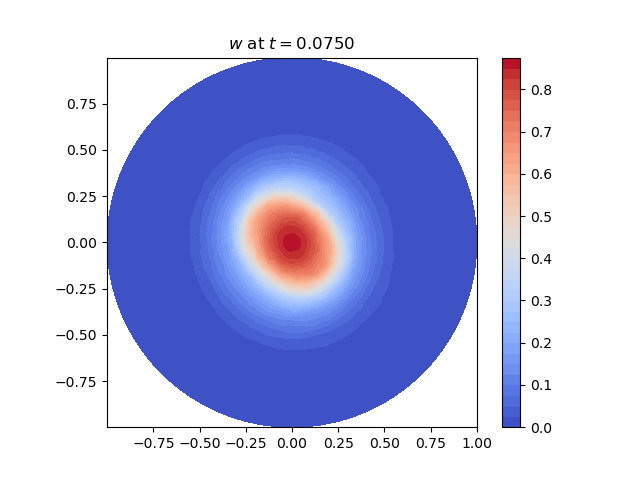}
		\end{minipage}
	\end{tabular}
	\begin{tabular}{l r}
		\rotatebox[origin=c]{90}{t=0.100} &
		\hspace*{-0.5cm}
		\begin{minipage}{0.32\textwidth}
			\centering
			\includegraphics[scale=0.35]{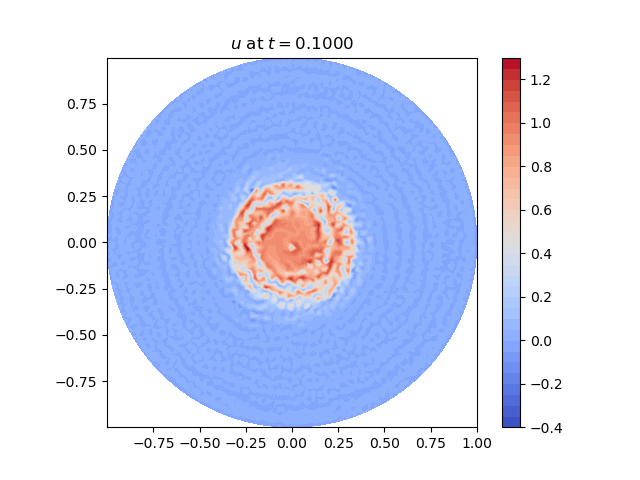}
		\end{minipage}
		\begin{minipage}{0.32\textwidth}
			\centering
			\includegraphics[scale=0.35]{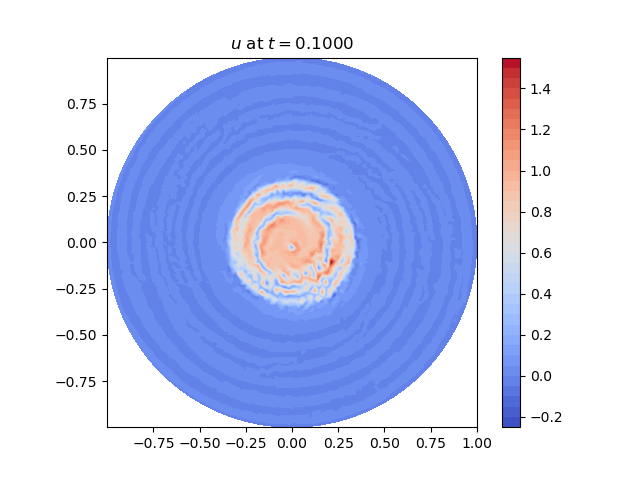}
		\end{minipage}
		\begin{minipage}{0.32\textwidth}
			\centering
			\includegraphics[scale=0.35]{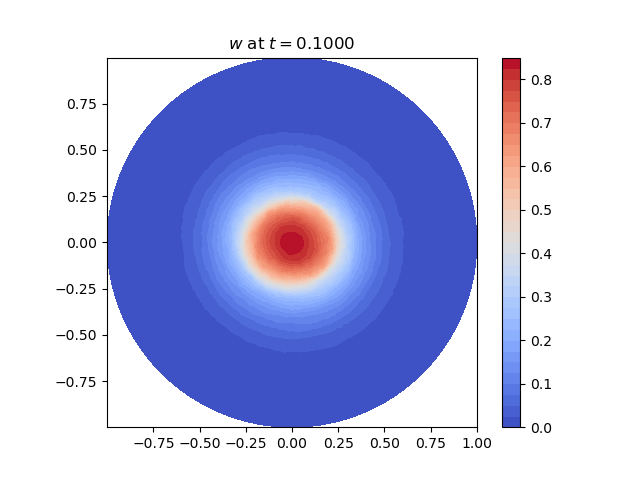}
		\end{minipage}
	\end{tabular}
	\caption{Aggregation of circular regions over time with a strong convection ($\vv=100(y,-x)$).}
	\label{fig:comparacion_esquemas_conveccion}
\end{figure}

\begin{figure}[ht]
	\centering
	\begin{minipage}{0.49\linewidth}
		\centering
		\textbf{Maximum-Minimum}\par\smallskip
		\includegraphics[scale=0.51]{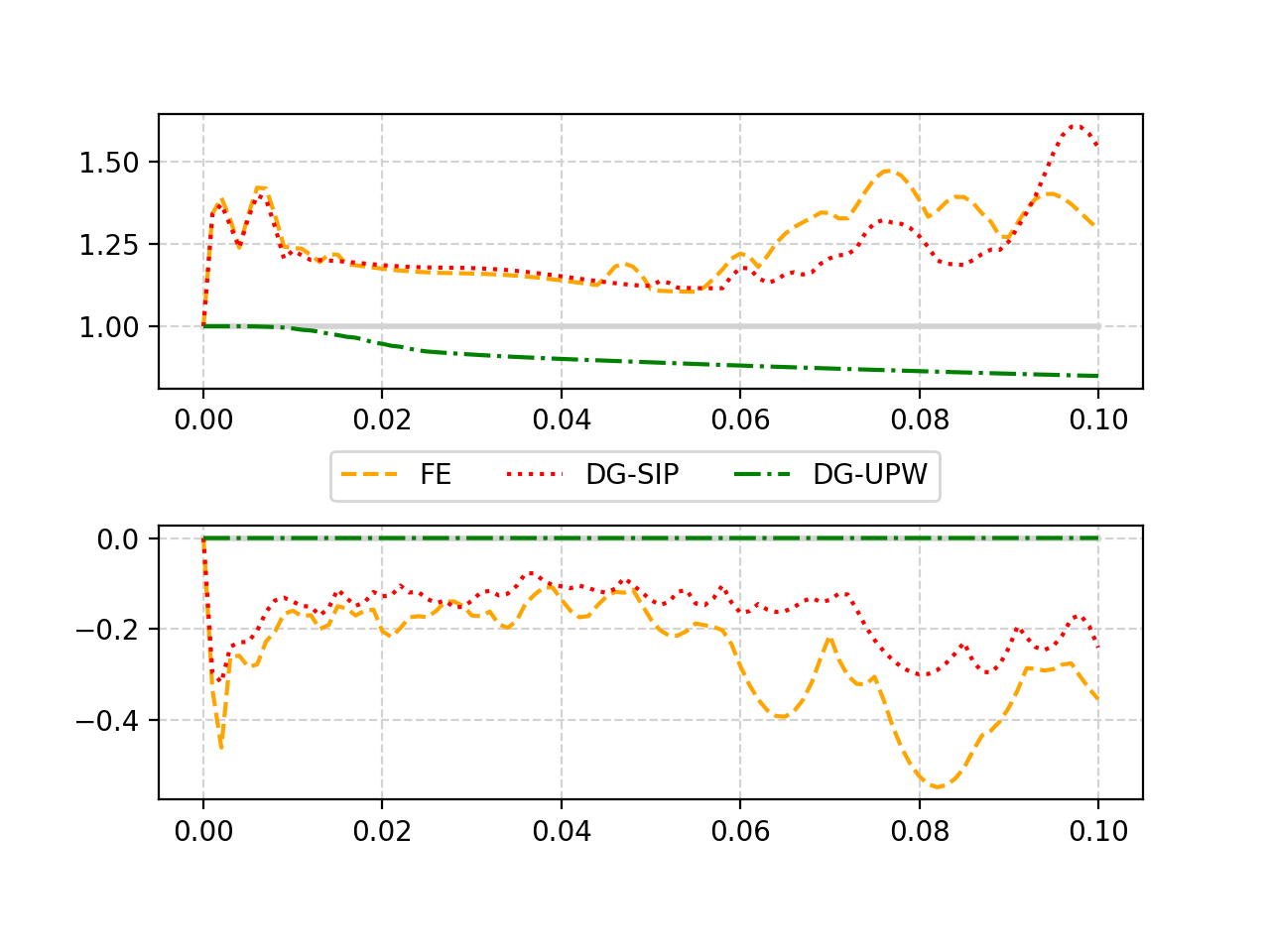}
	\end{minipage}
	\begin{minipage}{0.49\linewidth}
		\centering
		\textbf{Dynamics}
		\includegraphics[scale=0.5]{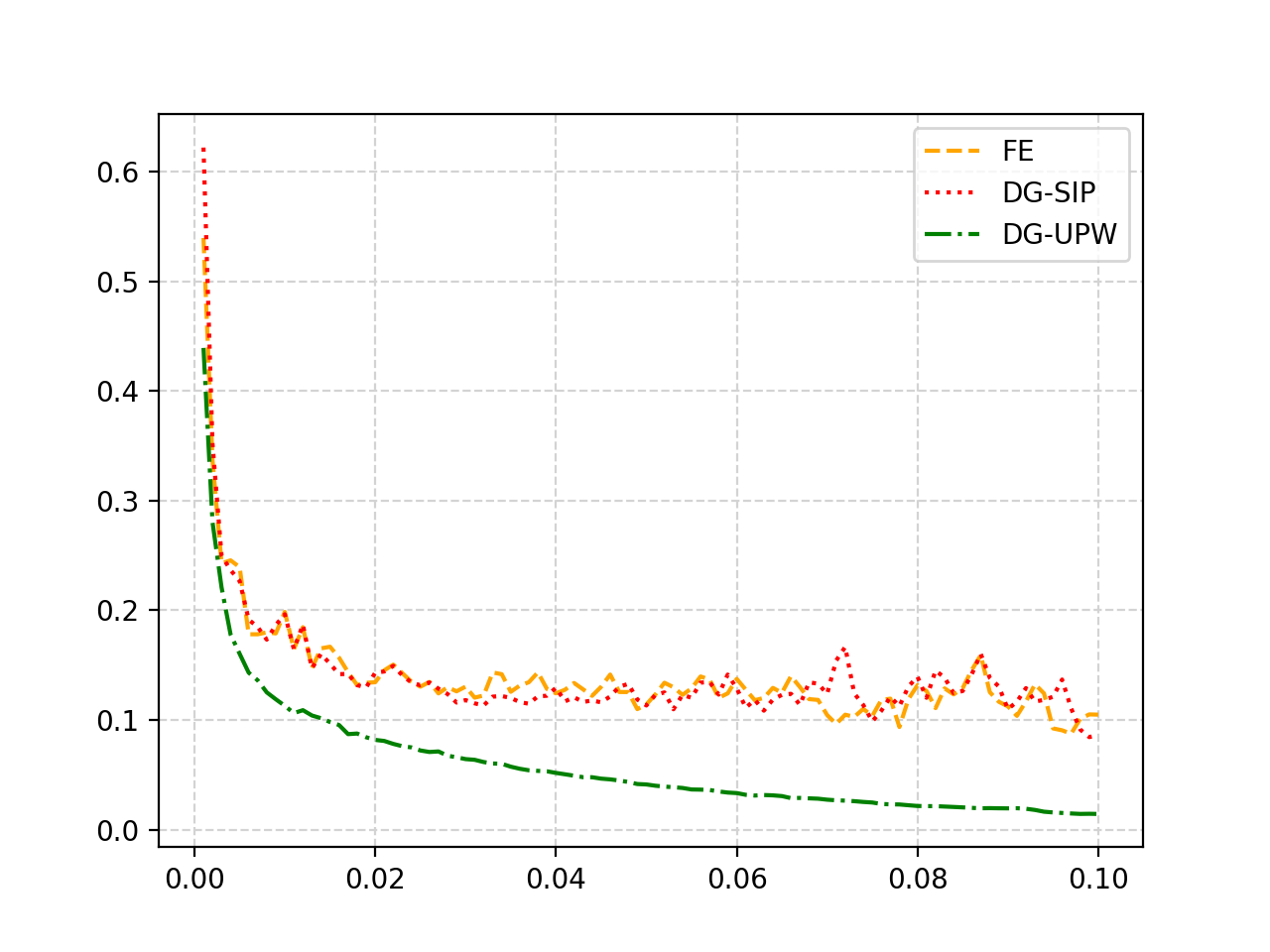}
	\end{minipage}
	\caption{Aggregation of circular phases with strong convection ($\vv=100(y,-x)$). On the left, maximum and minimum of the phase field variable over time. On the right, we plot $\frac{\normaLinf{u^{m+1}-u^m}}{\normaLinf{u^m}}$ to observe the dynamics of the approximations.}
	\label{fig:comparacion_esquemas_conveccion_max}
\end{figure}

\subsubsection{Spinoidal decomposition driven by Stokes cavity flow}

We show the results from a spinoidal decomposition test, in which the initial condition is a small uniformly distributed random perturbation around $0.5$, $u_0(x)\in[0.49,0.51]$ for $x\in\Omega$ as shown in Figure \ref{fig:spinoidal_decomposition_initial_condition}.

\begin{figure}[H]
	\centering
	\textbf{DG-SIP}
	\\
	\begin{minipage}{0.49\linewidth}
		\centering
		\includegraphics[scale=0.51]{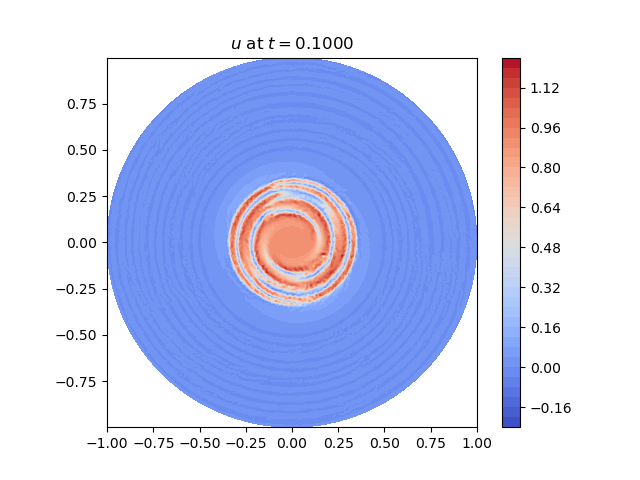}
	\end{minipage}
	\begin{minipage}{0.49\linewidth}
		\centering
		\includegraphics[scale=0.51]{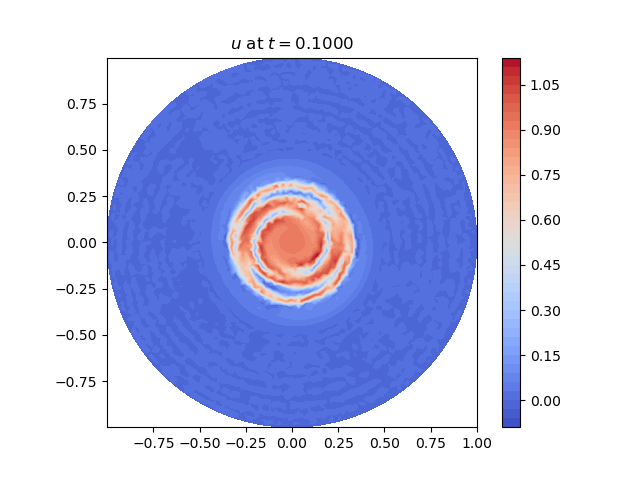}
	\end{minipage}
	\caption{Aggregation of circular phases with strong convection ($\vv=100(y,-x)$) using the DG-SIP scheme. On the left, the result obtained with $h/2\approx2\cdot 10^{-2}$ and $\Pd_1(\T_h)$. On the right, the result obtained with $h\approx4\cdot 10^{-2}$ and $\Pd_2(\T_h)$.}
	\label{fig:comparacion_esquemas_conveccion_refinado}
\end{figure}

\begin{figure}[t]
\centering
\textbf{Random perturbation}\par\smallskip
\includegraphics[scale=0.51]{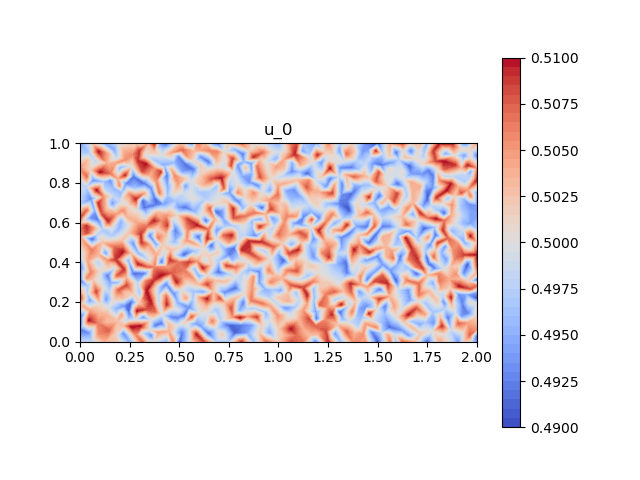}
\caption{Random initial perturbation for the spinoidal decomposition test.}
\label{fig:spinoidal_decomposition_initial_condition}
\end{figure}
As convection vector $\vv$, we take the flow resulting from solving a cavity test for the Stokes equations in the domain $\Omega=[0,2]\times [0,1]$ with Dirichlet boundary conditions given by a parabolic profile
$$\vv(x,y)=(x(2-x),0),\quad\forall (x,y)\in\Gamma_{\text{top}}=\{(x,1)\in\Rset^2\colon x\in[0,2]\}.$$

We used this parabolic profile in order to avoid the discontinuities of
the Stokes velocity for a standard boundary condition $\vv=1$ on $\Gamma_{\text{top}}$,
which produces a non vanishing divergence in the corners where the discontinuities arise.
In fact we have checked that, in this particular case where $\vv=1$ on $\Gamma_{\text{top}}$, the scheme DG-UPW does not preserve the upper bound $u^m\le 1$, although it does preserve the lower bound $0\le u^m$ (recall that, for compressible velocity, positivity is the only property of the solution, see Remark~\ref{rmrk:positividad_modelo_conveccion_lineal}).

We set $\varepsilon=0.005$, $\Delta t = 0.001$, $h\approx 0.07$ and, in this case, we take $\Pe=10$ in order to emphasize the convection effect.
We can observe in the Figures \ref{fig:comparacion_esquemas_spinoidal_decomposition} and \ref{fig:comparacion_esquemas_spinoidal_decomposition_max} how the maximum principle is preserved by our DG-UPW scheme~\eqref{esquema_DG_upw_Eyre_cahn-hilliard+adveccion} while for the other schemes the solution takes values out of the interval $[0,1]$ (to notice that, we must take into account the scale of the values shown on the right-hand side of each picture).

Furthermore, in Figure \ref{fig:comparacion_esquemas_spinoidal_decomposition_max} we can also notice that the approximation obtained using the DG-UPW scheme converges to a stationary state, while it remains in an nonphysical oscillatory state when we use the other schemes.

\begin{figure}[t]
\begin{tabular}{l r}
	\rotatebox[origin=c]{90}{t=10.0} &
	\hspace*{-0.5cm}
	\begin{minipage}{0.32\textwidth}
		\centering
		\textbf{FEM}
		\includegraphics[scale=0.35]{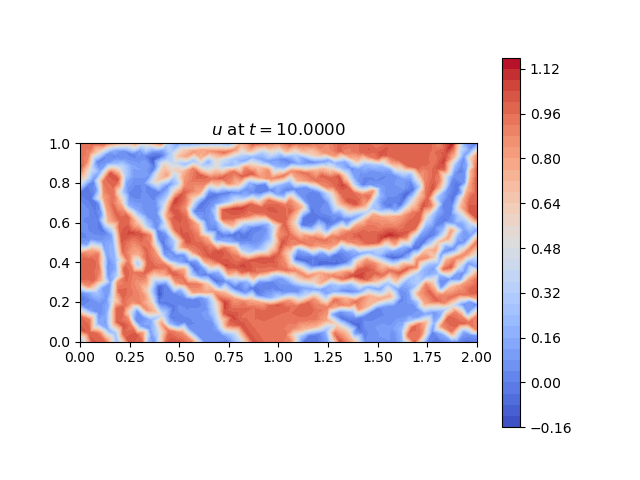}
	\end{minipage}
	\begin{minipage}{0.32\textwidth}
		\centering
		\textbf{DG-SIP}
		\includegraphics[scale=0.35]{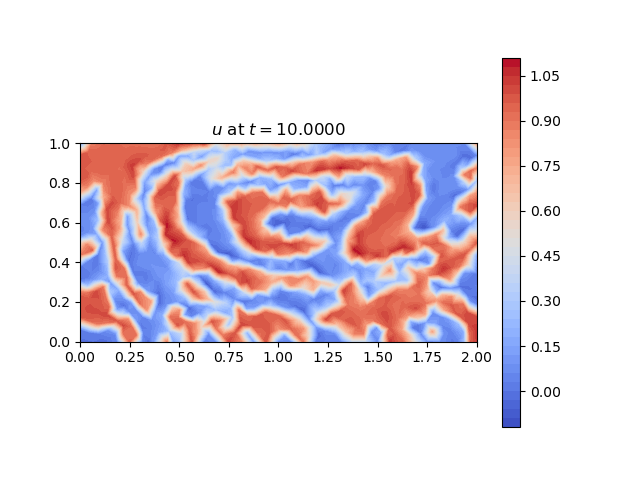}
	\end{minipage}
	\begin{minipage}{0.32\textwidth}
		\centering
		\textbf{DG-UPW}
		\includegraphics[scale=0.35]{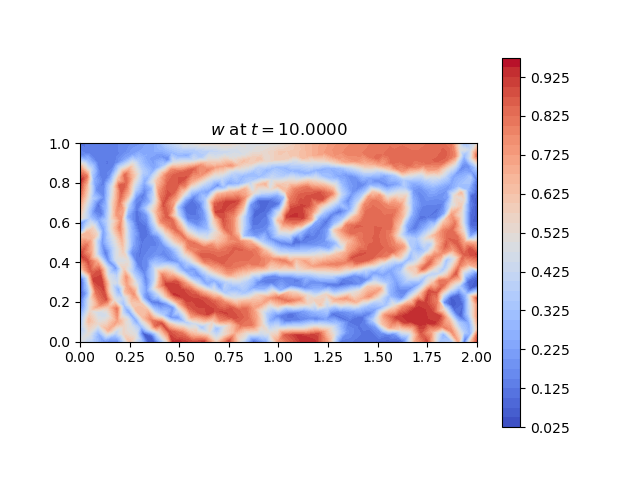}
	\end{minipage}
\end{tabular}
\begin{tabular}{l r}
	\rotatebox[origin=c]{90}{t=20.0} &
	\hspace*{-0.5cm}
	\begin{minipage}{0.32\textwidth}
		\centering
		\includegraphics[scale=0.35]{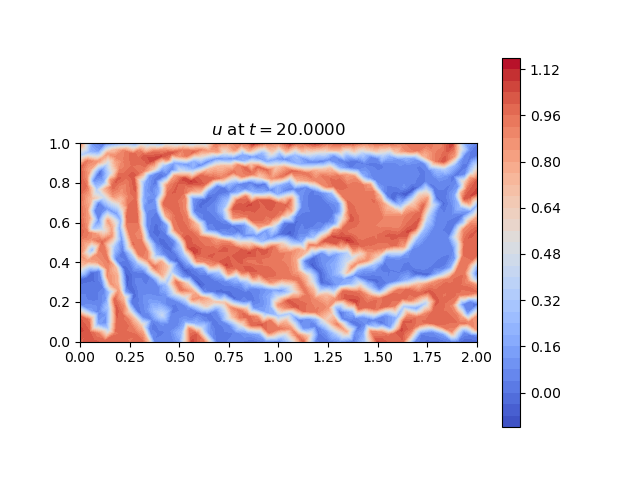}
	\end{minipage}
	\begin{minipage}{0.32\textwidth}
		\centering
		\includegraphics[scale=0.35]{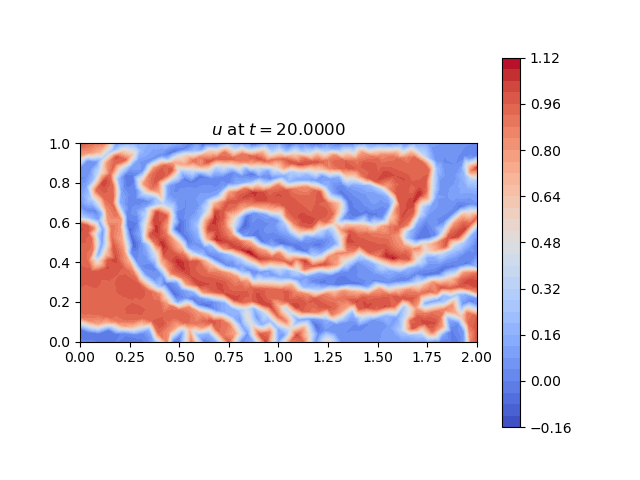}
	\end{minipage}
	\begin{minipage}{0.32\textwidth}
		\centering
		\includegraphics[scale=0.35]{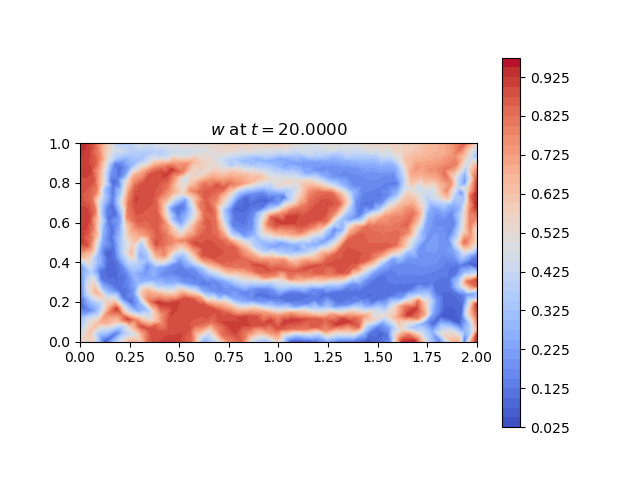}
	\end{minipage}
\end{tabular}
\begin{tabular}{l r}
	\rotatebox[origin=c]{90}{t=50.0} &
	\hspace*{-0.5cm}
	\begin{minipage}{0.32\textwidth}
		\centering
		\includegraphics[scale=0.35]{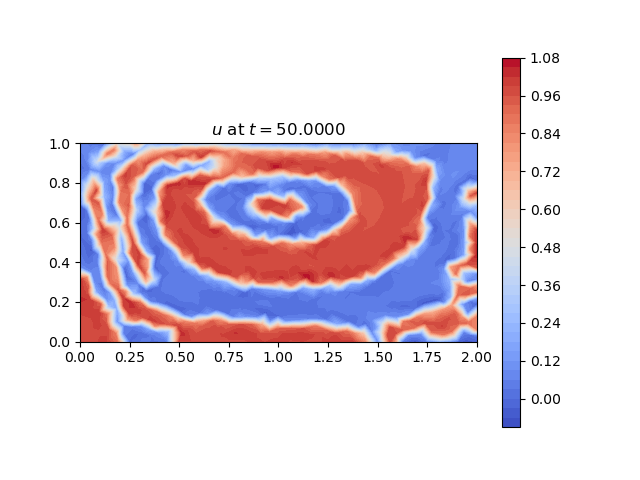}
	\end{minipage}
	\begin{minipage}{0.32\textwidth}
		\centering
		\includegraphics[scale=0.35]{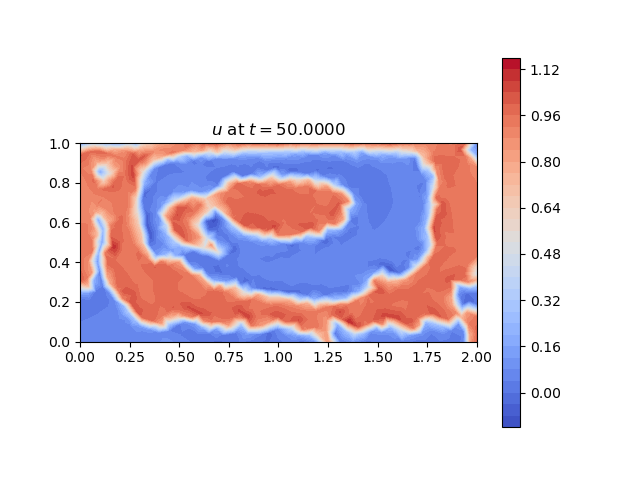}
	\end{minipage}
	\begin{minipage}{0.32\textwidth}
		\centering
		\includegraphics[scale=0.35]{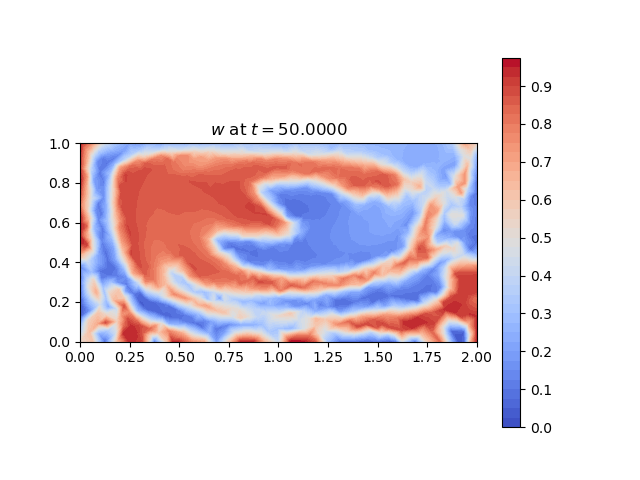}
	\end{minipage}
\end{tabular}
\begin{tabular}{l r}
	\rotatebox[origin=c]{90}{t=100.0} &
	\hspace*{-0.5cm}
	\begin{minipage}{0.32\textwidth}
		\centering
		\includegraphics[scale=0.35]{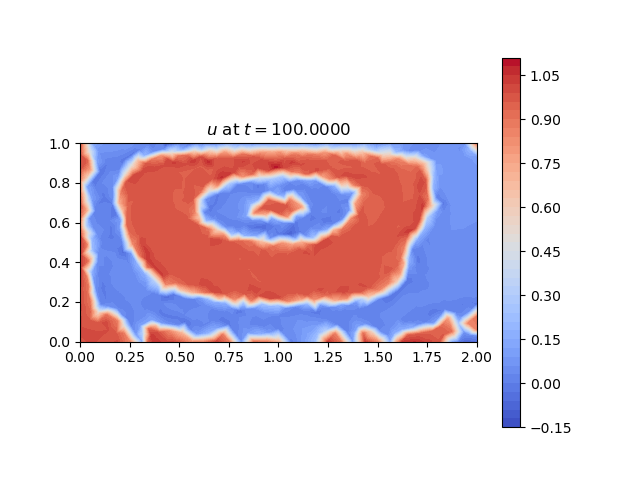}
	\end{minipage}
	\begin{minipage}{0.32\textwidth}
		\centering
		\includegraphics[scale=0.35]{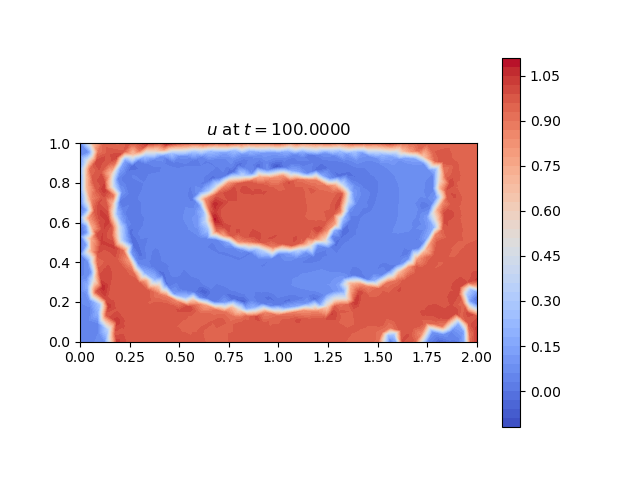}
	\end{minipage}
	\begin{minipage}{0.32\textwidth}
		\centering
		\includegraphics[scale=0.35]{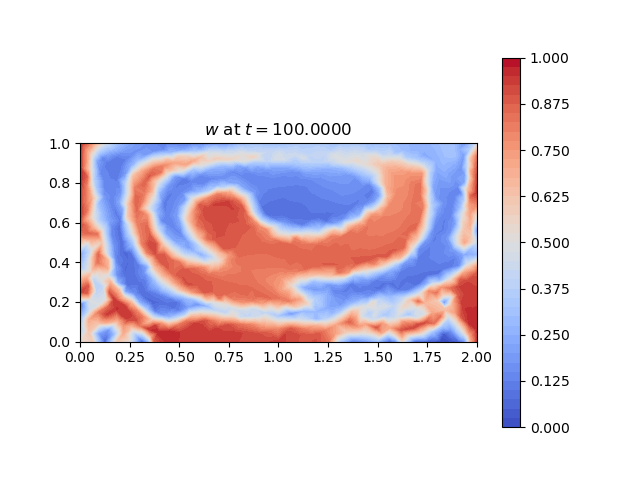}
	\end{minipage}
\end{tabular}
\begin{tabular}{l r}
	\rotatebox[origin=c]{90}{t=400.0} &
	\hspace*{-0.5cm}
	\begin{minipage}{0.32\textwidth}
		\centering
		\includegraphics[scale=0.35]{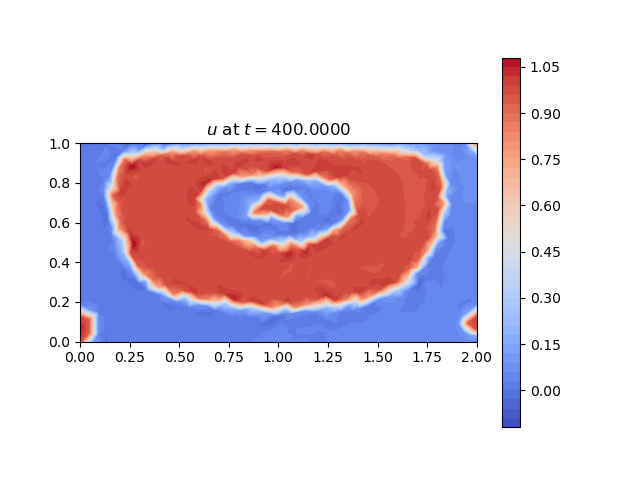}
	\end{minipage}
	\begin{minipage}{0.32\textwidth}
		\centering
		\includegraphics[scale=0.35]{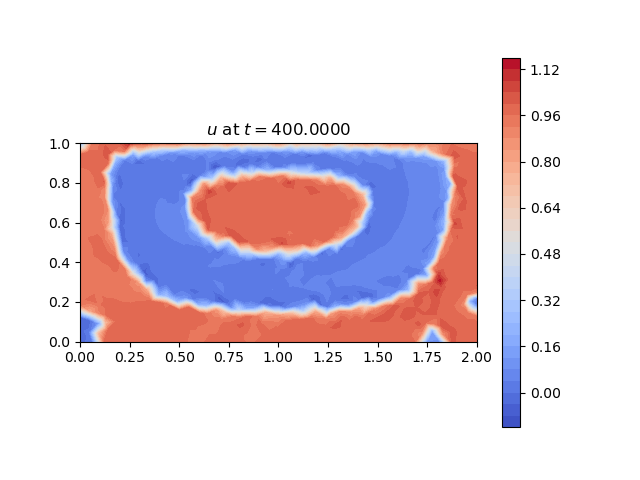}
	\end{minipage}
	\begin{minipage}{0.32\textwidth}
		\centering
		\includegraphics[scale=0.35]{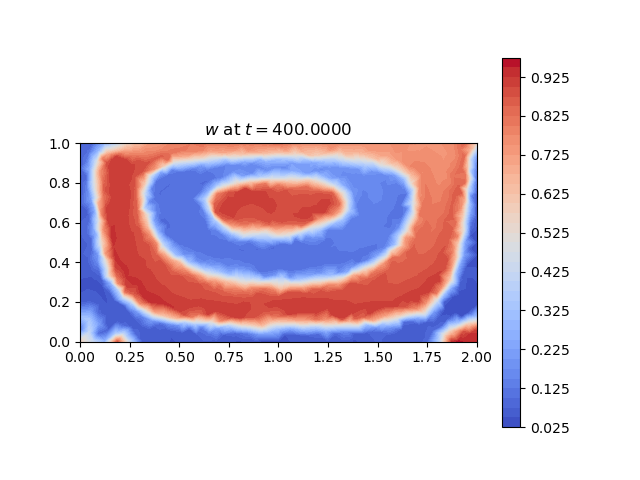}
	\end{minipage}
\end{tabular}
\caption{Spinoidal decomposition over time with convection vector obtained from a cavity test.}
\label{fig:comparacion_esquemas_spinoidal_decomposition}
\end{figure}

\begin{figure}[t]
\centering
\begin{minipage}{0.49\linewidth}
	\centering
	\textbf{Maximum-Minimum}\par\smallskip
	\includegraphics[scale=0.51]{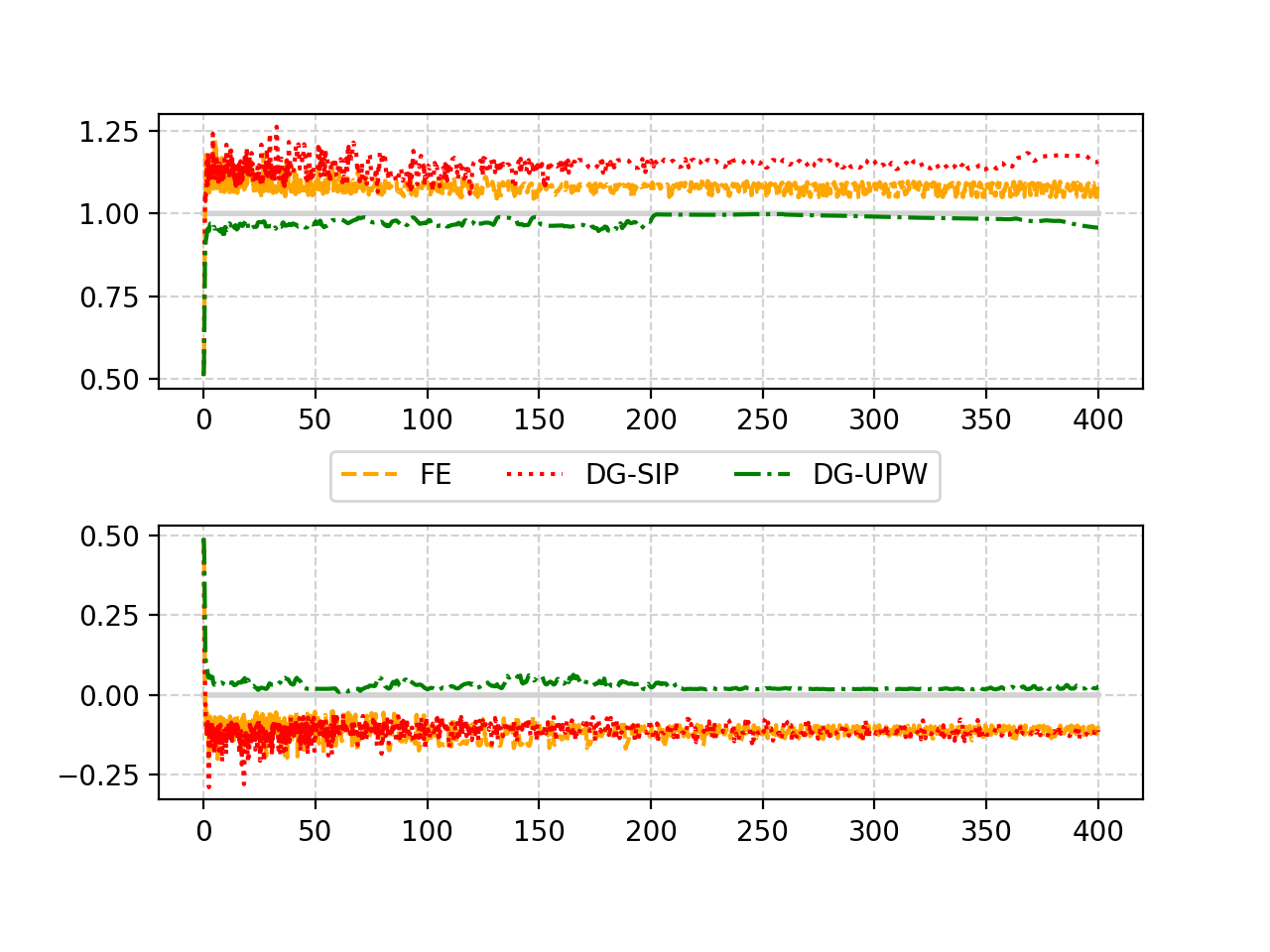}
\end{minipage}
\begin{minipage}{0.49\linewidth}
	\centering
	\textbf{Dynamics}
	\includegraphics[scale=0.5]{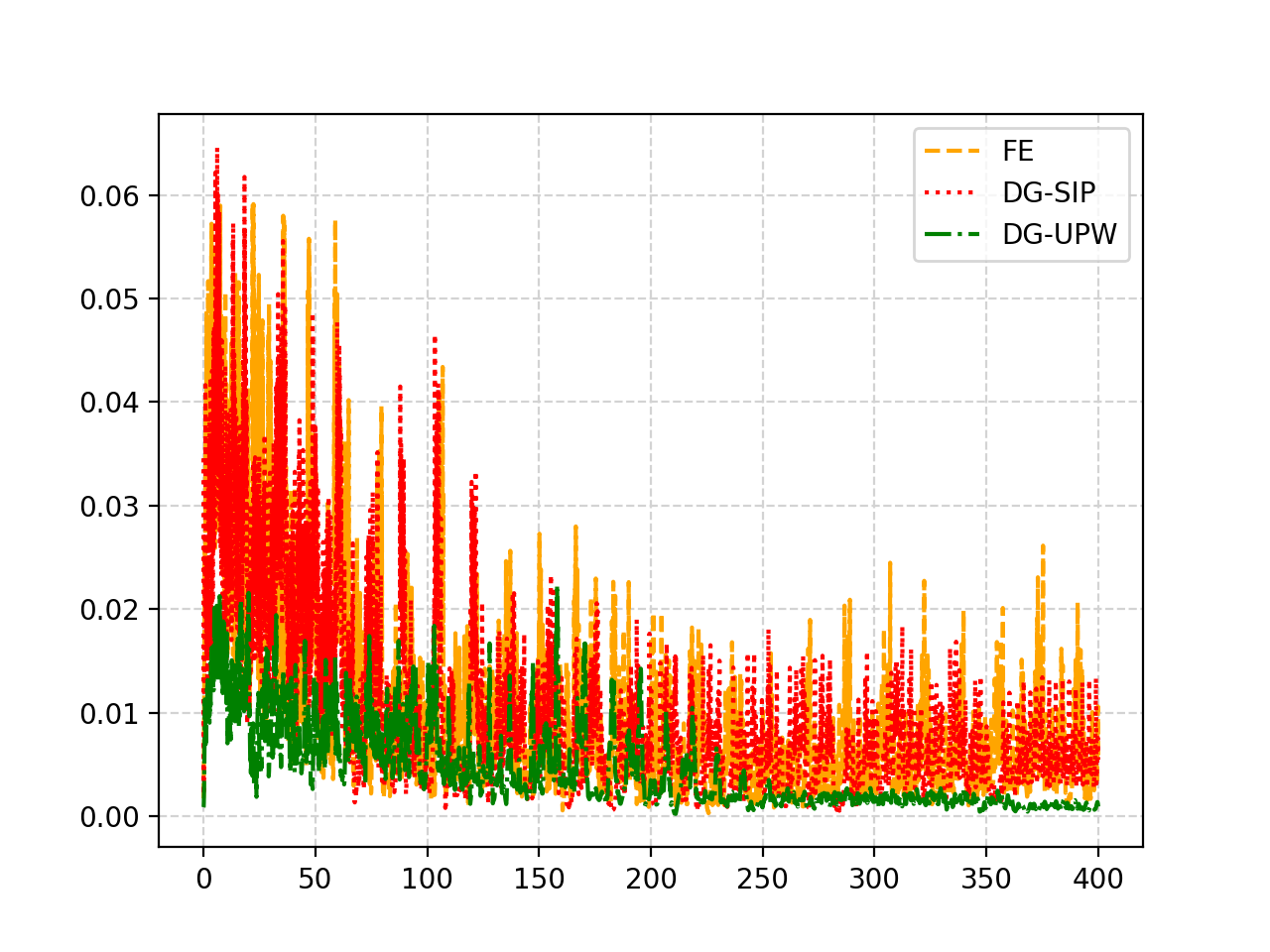}
\end{minipage}
\caption{Spinoidal decomposition with convection vector obtained from a cavity test. On the left, maximum and minimum of the phase field variable over time. On the right, we plot $\frac{\normaLinf{u^{m+1}-u^m}}{\normaLinf{u^m}}$ to observe the dynamics of the approximations.}
\label{fig:comparacion_esquemas_spinoidal_decomposition_max}
\end{figure}

\subsection{Error order test}
\label{sec:error_test}

We now introduce the results of a numerical test in which we
study the convergence order of our numerical scheme DG-UPW, verifying experimentally that the expected order is obtained.
For the {sake} of completeness, we also compare the convergence orders of the two others aforementioned space semidiscretizations: FEM, DG-SIP.
We consider again the same initial conditions than in Subsection \ref{sec:aggregation_circular_regions} with $\varepsilon=0.01$, see Figure~\ref{fig:condicion_inicial_agregacion-analisis_errores}.

First, in the nonconvective case, errors and convergence order
are compared with respect to an approximate solution which is computed using the FEM scheme in a very fine mesh of size
$h=1.414\cdot 10^{-3}$ and a time step $\Delta t=10^{-6}$ (which is taken as the ``exact solution''). In this case we have used conforming structured meshes for the space discretization.
The results for the DG-UPW scheme, which are shown in the first row of Table~\ref{tabla:analisis_errores_no_conveccion}, confirm order 1 (in fact, slightly over 1) in norm $\|\cdot\|_{L^2}$.
These results match our expectations for the $P_0$ approximation of $u^{m+1}$, with the upwind discretization of the nonlinear second-order term.
It is interesting to emphasize that, unexpectedly, the scheme produces kind results in $\|\cdot\|_{H^1}$, reaching order 1.
On the other hand, the FEM and DG-SIP schemes reach order 2 in $L^2$ and order 1 in $H^1$ norms, as expected (see also Table~\ref{tabla:analisis_errores_no_conveccion}).

Next, we focus on the case with convection, where we take $\vv=(y,-x)$ in the unit ball.
The resulting errors and convergence orders computed using the three different schemes over a conforming unstructured mesh are shown in Table \ref{tabla:analisis_errores_conveccion}.
In this case, the errors are computed with respect to the solution obtained for the FEM scheme in a mesh of size $h=4\cdot 10^{-3}$.

In this case, it is interesting to notice that the error order in $\|\cdot\|_{L^2}$ of the DG-UPW scheme is improved and it approaches the order 2 of the other schemes, while order in $\|\cdot\|_{H^1}$ slightly beats the other schemes.

\begin{table}
	\centering
	\footnotesize
	\begin{tabular}{||c|c|c|c|c|c|c|c|c||}
		\hline
		\multirow{2}{*}{Scheme} & \multirow{2}{*}{Norm}& $h\approx 2.8284\cdot 10^{-2}$ & \multicolumn{2}{c|}{$h/2\approx 1.4142\cdot 10^{-2}$} & \multicolumn{2}{c|}{$h/3\approx 9.428\cdot 10^{-3}$} &\multicolumn{2}{c||}{$h/4\approx 7.071\cdot 10^{-3}$} \\
		\cline{3-9}
		&&Error &Error & Order &  Error & Order &  Error & Order \\
		\hline
		\hline
		\multirow{2}{*}{\textbf{DG-UPW}} & $\norma{\cdot}_{L^2}$ & $8.5268\cdot 10^{-3}$ &$3.0933\cdot 10^{-3}$ & $1.46$ &$1.7645\cdot 10^{-3}$  & $1.38$  &$1.2134\cdot 10^{-3}$ & $1.30$   \\
		\cline{2-9}
		& $\norma{\cdot}_{H^1}$ &$8.0000\cdot 10^{-1}$  &$4.0199\cdot 10^{-1}$  & $0.99$ &$2.6081\cdot 10^{-1}$ & $1.07$  &$1.8849\cdot 10^{-1}$ & $1.13$   \\
		\hline
		\hline
                \multirow{2}{*}{FEM}  & $\norma{\cdot}_{L^2}$ &$5.3224\cdot 10^{-3}$  &$1.5679\cdot 10^{-3}$ &$1.76$ &$6.9944\cdot 10^{-4}$  &$1.99$ &$4.0191\cdot 10^{-4}$ & $1.93$ \\
		\cline{2-9}
		& $\norma{\cdot}_{H^1}$ &$8.9963\cdot 10^{-1}$  & $4.1080\cdot 10^{-1}$ & $1.13$  &$2.5252\cdot 10^{-1}$  & $1.2$   &$1.7799\cdot 10^{-1}$  & $1.22$  \\
		\hline
		\multirow{2}{*}{DG-SIP}  & $\norma{\cdot}_{L^2}$ &$4.6466\cdot 10^{-3}$  &$1.3023\cdot 10^{-3}$  & $1.84$  &$5.8945\cdot 10^{-4}$ & $1.96$ &$3.2710\cdot 10^{-4}$ & $2.05$   \\
		\cline{2-9}
		& $\norma{\cdot}_{H^1}$ &$1.1784$  &$5.8331\cdot 10^{-1}$  & $1.01$  &$3.6254\cdot 10^{-1}$ & $1.17$  &$2.6024\cdot 10^{-1}$ &$1.15$ \\
		\hline
	\end{tabular}
	\captionof{table}{Errors and convergence orders in $T=0.001$ without convection ($\vv=0$).}
	\label{tabla:analisis_errores_no_conveccion}
\end{table}

\begin{table}
	\centering
	\footnotesize
	\begin{tabular}{||c|c|c|c|c|c|c|c|c||}
		\hline
		\multirow{2}{*}{Scheme} & \multirow{2}{*}{Norm}& $h\approx 4\cdot 10^{-2}$ & \multicolumn{2}{c|}{$h/2\approx 2\cdot 10^{-2} $} & \multicolumn{2}{c|}{$h/3\approx 1.3333\cdot 10^{-2}$} &\multicolumn{2}{c||}{$h/4\approx 1\cdot 10^{-2}$} \\
		\cline{3-9}
		&&Error &Error & Order &  Error & Order &  Error & Order \\
		\hline
		\hline
		\multirow{2}{*}{\textbf{DG-UPW}} & $\norma{\cdot}_{L^2}$ &$1.7288\cdot 10^{-2}$  &$6.9446\cdot 10^{-3}$ & $1.32$  &$3.3102\cdot 10^{-3}$ & $1.83$  &$2.0578\cdot 10^{-3}$ & $1.65$   \\
		\cline{2-9}
		& $\norma{\cdot}_{H^1}$ &$1.4549$ &$6.0305\cdot 10^{-1}$ & $1.27$ &$3.0204\cdot 10^{-1}$ & $1.71$ &$2.0315\cdot 10^{-1}$ & $1.38$  \\
		\hline
		\hline
		\multirow{2}{*}{FEM}  & $\norma{\cdot}_{L^2}$ &$6.8347\cdot 10^{-3}$  &$2.1213\cdot 10^{-3}$  & $1.69$  &$9.7749\cdot 10^{-4}$ & $1.91$  &$5.3883\cdot 10^{-4}$ & $2.07$ \\
		\cline{2-9}
		& $\norma{\cdot}_{H^1}$ &$8.3104\cdot 10^{-1}$  &$3.8060\cdot 10^{-1}$  & $1.13$  &$2.1887\cdot 10^{-1}$ & $1.36$ &$1.4991\cdot 10^{-1}$ & $1.32$  \\
		\hline
		\multirow{2}{*}{DG-SIP}  & $\norma{\cdot}_{L^2}$ &$6.5242\cdot 10^{-3}$  & $1.9557\cdot 10^{-3}$  & $1.74$ &$8.9471\cdot 10^{-4}$  & $1.93$ &$5.0257\cdot 10^{-4}$ & $2.00$ \\
		\cline{2-9}
		& $\norma{\cdot}_{H^1}$ &$1.1980$ &$6.1624\cdot 10^{-1}$  & $0.96$ &$3.8451\cdot 10^{-1}$  & $1.16$ &$2.7439\cdot 10^{-1}$ &$1.17$ \\
		\hline
	\end{tabular}
	\captionof{table}{Errors and convergence orders in $T=0.001$ with convection ($\vv=(y,-x)$).}
	\label{tabla:analisis_errores_conveccion}
\end{table}

As a technical comment, notice that, in order to compute the errors, we projected on a $\Pc_1$ space both the exact and the DG solution obtained when using the DG-SIP. In the case of the DG-UPW scheme we have taken $w$ as the continuous solution.

\section*{Acknowledgments}
The first author has been supported by \textit{UCA FPU contract UCA/REC14VPCT/2020 funded by Universidad de Cádiz} and by a \textit{Graduate Scholarship funded by the University of Tennessee at Chattanooga}. The second and third authors have been supported by Grant PGC2018-098308-B-I00 by MCI N/AEI/ 10.13039/501100011033 and by ERDF a way of making Europe.

\bibliography{biblio_database}

\end{document}